\newtheorem{theorem}{Theorem}[section]
\newtheorem{lemma}[theorem]{Lemma}
\newtheorem{assumption}[theorem]{Assumption}
\newtheorem{proposition}[theorem]{Proposition}
\newtheorem{claim}[theorem]{Claim}
\newtheorem{corollary}[theorem]{Corollary}
\theoremstyle{definition}
\newtheorem{definition}[theorem]{Definition}
\newtheorem{example}[theorem]{Example}
\theoremstyle{remark}
\newtheorem{remark}[theorem]{Remark}
\numberwithin{equation}{section}
\newcommand{\Z}{\mathbb{Z}}
\newcommand{\R}{\mathbb{R}}
\newcommand{\s}{\sigma}
\newcommand{\be}{\beta}
\newcommand{\de}{\delta }
\begin{document}

\title{The Self-Linking Number in Annulus and Pants Open Book Decompositions}

\author{Keiko Kawamuro}
\address{Department of Mathematics, The University of Iowa, Iowa city, Iowa 52240}
\email{kawamuro@uiowa.edu}

\author{Elena Pavelescu}
\address{Department of Mathematics, Rice University, Houston, Texas 77005}
\email{Elena.Pavelescu@rice.edu}
\thanks{The first author was partially supported by NSF grants DMS-0806492 and DMS-0635607. }

\subjclass[2000]{Primary 57M25, 57M27; Secondary 57M50}

\date{November 27, 2010}

\keywords{}

\begin{abstract}

We find a self-linking number formula for a given null-homologous transverse link in a contact manifold that is compatible with either an annulus or a pair of pants open book decomposition.
It extends Bennequin's self-linking formula for a braid in the standard contact $3$-sphere.

\end{abstract}

\maketitle

\section{Introduction}

Alexander's theorem  \cite{A} states that every closed and oriented $3$-manifold admits an open book decomposition.

\begin{definition}
Let $\Sigma$ be a surface with non empty boundary and $\phi$ be a diffeomorphism of the surface fixing the boundary pointwise.
We construct a closed manifold
\begin{equation*}
M_{(\Sigma, \phi)} = \Sigma \times [0,1] / \sim
\end{equation*}
where ``$\sim$'' is an equivalence relation satisfying $(\phi(x), 0) \sim (x,1)$ for $x\in {\rm Int}(\Sigma)$ and $(x, \tau) \sim (x, 1)$ for $x \in \partial \Sigma$ and $\tau \in [0, 1]$.
The pair $(\Sigma, \phi)$ is called an {\em abstract open book decomposition} of the
manifold $M_{(\Sigma, \phi)}$.

Alternatively, an \emph{open book decomposition} for $M$ can be defined as a pair (L, $\pi$), where
(1) $L$ is an oriented link in $M$ called \textit{the binding} of the open book;
(2) $\pi : M\setminus L\to S^1$ is a fibration whose fiber, $\pi^{-1}(\theta)$, called a \textit{page}, is the interior of a compact surface $\Sigma_{\theta}\subset M$ such that $\partial \Sigma_\theta = L$ for all $\theta\in S^1$.

\end{definition}

One of the central results about the topology of contact $3$-manifolds is Giroux correspondence~\cite{G}:
$$\left\{
\begin{array}{l}
\mbox{contact structures $\xi$ on $M^3$} \\
\mbox{up to contact isotopy}
\end{array}
\right\}
\stackrel{1-1}{\longleftrightarrow}
\left\{
\begin{array}{l}
\mbox{open book decompositions $(\Sigma, \phi)$} \\
\mbox{of $M^3$ up to positive stabilization}
\end{array}
\right\}.
$$
For example, the standard contact structure $\xi_{std}=\ker(dz+r^2 d\theta)$ on $S^3=\R^3 \cup \{\infty\}$ corresponds to the open book decomposition $(D^2, id)$.

We define a braid  and the braid index in a general open book setting:

\begin{definition}\label{def-braid}
Suppose $(L, \pi)$ is an open book decomposition for a $3$-manifold $M$.
A link $K \subset M$ is called a (closed) {\em braid} if $K$ transversely intersects each page $\Sigma_\theta =\pi^{-1}(\theta)$ of the open book.
That is, at each point $p\in K \cap \Sigma_\theta$, we have $T_p\Sigma_\theta\oplus T_pK=T_pM$.
The {\em braid index} of a braid $K$ is the degree of the map $\pi$ restricted to $K$. In other words, if a braid $K$ intersects each page in $n$ points, then the braid index of $K$ is $n$.
\end{definition}

Bennequin~\cite{Ben} proved that any transverse link in $(S^3, \xi_{std})$ can be transversely isotoped to a closed braid in $(D^2, id)$.
Later the second author generalized Bennequin's result into the following:

\begin{theorem}\cite[Theorem 3.2.1]{P}\label{elena}
Suppose $(\Sigma, \phi)$ is an open book decomposition for a $3$-manifold
$M=M_{(\Sigma, \phi)}$. Let $\xi=\xi_{(\Sigma, \phi)}$ be a compatible contact structure.
Let $K$ be a transverse link in $(M, \xi)$. Then $K$ can be transversely isotoped to a braid in $(\Sigma, \phi)$.
\end{theorem}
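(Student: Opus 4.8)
The plan is to adapt Alexander's braiding algorithm to the contact open book setting, carrying it out through transverse isotopies rather than merely smooth ones. First I would fix a contact form $\alpha$ for $\xi$ that is compatible with $(\Sigma,\phi)$ in the sense of Giroux, so that $d\alpha$ restricts to a positive area form on every page $\Sigma_\theta$, the form $\alpha$ is positive on the binding $L=\partial\Sigma_\theta$, and the Reeb field $R_\alpha$ is positively transverse to the interiors of the pages and positively tangent to $L$. Writing $\pi:M\setminus L\to S^1$ for the fibration, the braid condition on $K$ is exactly that the angular function $\theta=\pi|_K$ satisfies $\dot\theta>0$ everywhere, so the goal is to remove all points where $\dot\theta\le 0$.

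After a small transverse perturbation I would arrange that $K$ is disjoint from $L$ and meets the pages generically, so that $K\cap\Sigma_\theta$ is transverse except at finitely many pages where $K$ has a simple Morse tangency. This splits $K$ into finitely many arcs on which $\dot\theta>0$ (already braided) and finitely many arcs on which $\dot\theta<0$; the theorem is proved once every negative arc is removed. Since transversality to $\xi$ is an open condition, all perturbations and isotopies below have room to keep $\alpha(\dot K)>0$.

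The heart of the argument is a contact version of Alexander's trick that removes the negative arcs one at a time. Given a negative subarc $a\subset K$, I would replace it rel endpoints by an arc $a'$ that winds around the binding in the positive $\theta$ direction, so that $\dot\theta>0$ along $a'$; realizing this replacement through a transverse isotopy forces $a$ to be carried across $L$, and the main obstacle is to perform the crossing while keeping $\alpha(\dot K)>0$. To control it I would set up a standard neighborhood $N(L)\cong L\times D^2$ with coordinates $(z,r,\theta)$ in which the pages are the half planes $\{\theta=\mathrm{const}\}$ and $\alpha=dz+r^2\,d\theta$ after normalization. In this model $R_\alpha=\partial_z$ along $\{r=0\}$, a curve can cross $L=\{r=0\}$ transversely to $\xi$ precisely where it moves with $\dot z>0$, and a curve spiralling around $L$ with $\dot\theta>0$ is automatically positively transverse near $r=0$. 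Hence I would push $a$ into $N(L)$, slide it across $L$ along the positive Reeb direction, and let it emerge as a positively winding transverse spiral, all strict inequalities guaranteeing that positive transversality is preserved throughout.

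Finally I would run this elimination as an induction on the number of negative arcs: each application of the local move strictly decreases that number while preserving the transverse isotopy class of $K$, and when none remain the curve satisfies $\dot\theta>0$ everywhere, i.e. it is a braid with respect to $(\Sigma,\phi)$. The delicate points, where I expect to spend the most care, are the construction of the normal form for $\alpha$ near $L$ and the verification that the finger push can be carried out globally, avoiding the remaining strands of $K$ and respecting the monodromy $\phi$, without ever violating $\alpha(\dot K)>0$.
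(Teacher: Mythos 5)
Your outline is essentially the argument the paper relies on: the theorem is quoted without proof from the second author's thesis \cite[Theorem 3.2.1]{P}, and the proof there is exactly this transverse adaptation of Alexander's braiding trick --- fix a Giroux-compatible contact form, isolate the finitely many arcs where $\dot\theta\le 0$, and eliminate them one at a time by pushing them across the binding in the standard model $\alpha=dz+r^2\,d\theta$, where strict positivity of $\alpha(\dot K)$ is preserved throughout. So your proposal is correct and follows the same route as the cited proof, including the points you rightly flag as delicate (the normal form near $L$ and keeping the finger move disjoint from the other strands).
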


The {\em self linking $($Bennequin$)$} number is a classical invariant for transverse knots.
Bennequin \cite{Ben} gave a formula of the self linking number for a braid $b$ in $(D^2, id)$:
\begin{equation}\label{Bennequin-eq}
sl(b)=-n+a,
\end{equation}
where $n$ is the braid index, and $a$ the algebraic crossing number (the exponent sum) of the braid.

The first goal of this paper is to give a combinatorial description for the self linking number of a null-homologous transverse link in the contact lens spaces compatible with $(A, D^k)$ the annulus $A$ open book decomposition with monodromy the $k^{th}$ power of the positive Dehn twist $D$.
By Theorem~\ref{elena}, our problem is reduced to searching a self linking formula for a null-homologous braid in the open book decomposition $(A, D^k)$.
Such a braid is given by a product of permutations of points in a local disk on the annulus $A$ and moves of points which turn around the hole of $A$. We denote by $a_{\sigma}$ the algebraic crossing number of the local permutations, and by $a_{\rho}$ the algebraic rotation number around the hole of $A$, see Definition \ref{def of a_rho} for precise definitions. With these notations, we extend Bennequin's formula (\ref{Bennequin-eq}) into the following:

\vspace{3mm}

\noindent\textbf{Theorem~\ref{sl-thm}. }
{\em
Let $b$ be a null-homologous closed braid in $(A, D^k)$ of braid index $n$.
For $k\ne 0$ we have
$$ sl(b)= -n+a_{\sigma}+a_{\rho}(1-\frac{a_\rho}{k}).$$ 
When $k=0$ there exists a canonical Seifert surface $\Sigma_b$ of $b$ and we have
$$sl(b,\Sigma_b)= -n+a_{\sigma}. $$}

The Seifert surface $\Sigma_b$ will be constructed in Section 3. The surface is canonical in the sense that the way of construction is similar to that of the standard Seifert surface, or Bennequin surface, of a closed braid in $S^3$.
\vspace{3mm}

Our second goal is to find a self-linking formula for null-homologous transverse links in a contact Seifert fibered manifold $M$ of signature $(g=0, k_1, k_2, k_3)$.
Let $S$ be a pair of pants (a disk with two holes).
Let $D_i$ ($i=1,2,3$) be the positive Dehn twists along the curves parallel to the boundary circles of $S$.
Then $M$ has an open book decomposition $(S, D_1^{k_1} \circ D_2^{k_2} \circ D_3^{k_3})$, and is equipped with a compatible contact structure.
A braid in the pants open book is a product of permutations of points in a local disk on $S$ and moves of points which turn around the holes of $S$.
We denote by $a_{\sigma}$ the algebraic crossing number of the local permutations and by $a_{\rho_i}$ ($i=2,3$) the algebraic winding number around the holes.
See Definition~\ref{def of a_rho_i} for precise definitions.
We obtain the following formula which also extends (\ref{Bennequin-eq}).

\vspace{3mm}

\noindent\textbf{Theorem~\ref{sl for Seifert manifold}}
{\em
Let $b$ be a null-homologous braid in $(S, D_1^{k_1} \circ D_2^{k_2} \circ D_3^{k_3})$ of braid index $n$.
Suppose $k_1, k_2, k_3$ are integers with
$k_1, k_2, k_3\geq 0$; $k_1, k_2, k_3\leq 0$; or $k_1=0, k_2k_3<0$.
We have:
$$
sl(b, [\Sigma_b])=-n+a_{\sigma}+a_{\rho_2}(1-s_2) + a_{\rho_3}(1-s_3) - (s_2 + s_3) k_1,
$$
where $\Sigma_b$ is some Seifert surface for $b$. The constants $s_2$, $s_3$ are determined by $a_{\rho_2}$,    $a_{\rho_3}$, $k_1$, $k_2$ and $k_3$, under the assumption that $b$ is null-homologous, see Definition~\ref{def of a_rho_i}.
}

\vspace{3mm}

The organization of the paper is the following:

In Section~\ref{preliminaries}, we fix notations and study properties of the contact lens space $(M_{(A, D^k)}, \xi_k)$.

In Section~\ref{section 2}, we construct a Bennequin type Seifert surface $\hat{F}_b$ for a given braid $b$ in $(A, D^k)$.
In general, this $\hat{F}_b$ is an immersed surface and the Bennequin-Eliashberg inequality is not satisfied even for tight cases. 
We resolve all the singularities and obtain an embedded surface $\Sigma_b$.
We develop a theory about resolution of singularities of an immersed surface and corresponding changes in characteristic foliations.

In Section~\ref{section sl}, we prove Theorem~\ref{sl-thm}, an explicit formula of the self linking number relative to  $\Sigma_b$, which extends Bennequin's formula (\ref{Bennequin-eq}).
As the self linking number is defined to be the euler number of the contact $2$-plane bundle relative to the surface framing, we measure the difference between the immersed $\hat{F}_b$-framing and the embedded $\Sigma_b$-framing.
We also study the behavior of our self linking number under a braid stabilization.
Corollary~\ref{thm-stab} states that our self linking number is invariant under a positive stabilization and changes by $2$ under a negative stabilization, which extends Bennequin's result for braids in $(S^3, \xi_{std})$.

In Section~\ref{section-seifert}, we apply our surface construction method to some class of contact Seifert fibered manifolds and prove Theorem~\ref{sl for Seifert manifold}. \\

\textbf{Acknowledgements. }  The authors would like to thank John Etnyre for numerous useful comments and sharing his ideas, especially those on Corollary~\ref{cor-sign}, and Matthew Hedden for helpful comments on Section~\ref{section sl}. They also thank the referee for carefully examining the paper and providing constructive comments.  K.K. thanks Tim Cochran and Walter Neumann for stimulus conversations.

\section{Preliminaries}\label{preliminaries}

Let $A=S^1 \times I$ be an annulus and $D_\alpha$ the positive Dehn twist about the core circle $\alpha=S^1 \times \{\frac{1}{2}\}$.
For simplicity, we denote $D_\alpha$ by $D$.
\begin{figure}[htpb!]
\begin{center}
\begin{picture}(203, 78)
\put(0,0){\includegraphics{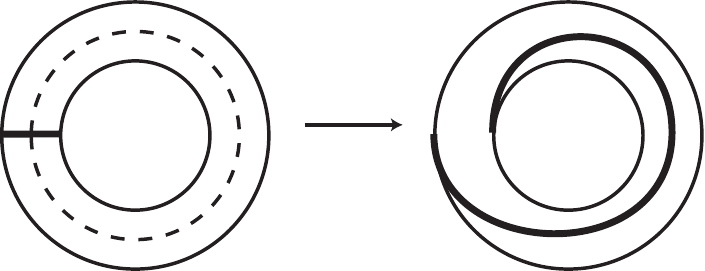}}
\put(35, 3){$\alpha$}
\put(-7, 38){$l$}
\put(95, 27){$D_\alpha$}
\put(203, 63){\vector(-1,-1){10}}
\put(203, 64){$D_\alpha(l)$}
\end{picture}
\caption{A positive Dehn twist $D_\alpha$ about $\alpha$. }\label{rh-twist}
\end{center}
\end{figure}

We study an abstract open book decomposition $(A, D^k)$.

\begin{claim}\label{ob}
The corresponding manifold $M_{(A, D^k)}$ to  $(A, D^k)$ is:
$$
M_{(A, D^k)}= \left\{
\begin{array}{ll}
L(k, k-1) & \mbox{ if } k>0, \\
S^1 \times S^2 & \mbox{ if } k=0, \\
L(|k|, 1) & \mbox{ if } k<0.
\end{array}
\right.
$$
\end{claim}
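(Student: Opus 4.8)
The plan is to exhibit $M_{(A,D^k)}$ as a genus-one Heegaard splitting and read the answer off the gluing data. Write $A=S^1_u\times I_v$ with $u\in\R/\Z$, $v\in[0,1]$, and realize the $k$-th power of the Dehn twist by $D^k(u,v)=(u+kf(v),v)$ for a smooth profile $f$ with $f(0)=0$, $f(1)=1$, supported near the core. Using the two equivalent descriptions of an open book, $M_{(A,D^k)}$ is the mapping torus $\mathrm{MT}(D^k)=A\times[0,1]_\theta/\!\sim$ (with $((u,v),1)\sim((u+kf(v),v),0)$) together with two solid tori $V_0,V_1$ capping the boundary tori $T_0=\{v=0\}$ and $T_1=\{v=1\}$, whose cores are the two binding components. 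On $\partial V_i$ the meridian $\mu_i$ (bounding a disk in $V_i$) is the curve meeting every page once, i.e.\ the $\theta$-direction, while the longitude $\lambda_i$ runs parallel to the binding, i.e.\ the $u$-direction.

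First I would identify $\mathrm{MT}(D^k)$ with the thickened torus $(S^1_u\times S^1_\theta)\times I_v$. Since $D^k$ is the identity near $\partial A$, the map $H(u,v,\theta)=(u+k\theta f(v),\,v,\,\theta)$ descends to a diffeomorphism from $\mathrm{MT}(D^k)$ onto the product: one checks directly that it is compatible with the gluing relation. Tracking the boundary, $H$ is the identity on $T_0$ (as $f(0)=0$) but on $T_1$ sends $(u,\theta)\mapsto(u+k\theta,\theta)$ (as $f(1)=1$), a shear by $k$. Passing through the product collar $T^2\times I$, the two filling tori are therefore glued directly along a common torus, so $M_{(A,D^k)}=V_0\cup_h V_1$ is a genus-one Heegaard splitting; consequently it is $S^3$, $S^1\times S^2$, or a lens space.

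It remains to compute $h$. In the product homology basis $(e_u,e_\theta)$ one has $\mu_0=e_\theta$, $\lambda_0=e_u$ from the $V_0$ side, while the shear at $T_1$ turns the $V_1$ data into $\mu_1=k\,e_u+e_\theta$ and $\lambda_1=e_u$. Re-expressing in the basis $(\mu_0,\lambda_0)$ gives, after fixing orientations, $h_*(\mu_1)=-\mu_0+k\lambda_0$. Since $\mu_0$ is trivial in $\pi_1(V_0)=\langle t\rangle$ (with $\lambda_0=t$), the relation $h_*(\mu_1)=1$ reads $t^{k}=1$, whence $H_1(M_{(A,D^k)})=\Z/k$. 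Thus for $k\neq0$ the manifold is a lens space of order $|k|$, and for $k=0$ the map $h_*$ sends meridian to meridian, giving $S^1\times S^2$. Reading the second invariant off $h_*(\mu_1)$ (namely $q\equiv-1\equiv k-1\pmod k$) identifies it as $L(k,k-1)$ for $k>0$, and, under the orientation reversal forced by $k\mapsto-k$, as $L(|k|,1)$ for $k<0$.

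The main obstacle is orientation bookkeeping. The homological computation immediately yields $|H_1|=|k|$ and pins down the lens space up to orientation, but distinguishing $L(k,k-1)$ from its mirror $L(k,1)$ hinges on the sign of the $\mu_0$-coefficient of $h_*(\mu_1)$, which in turn requires carefully orienting the pages, the binding, and the mapping-torus direction so that the induced orientation of $M_{(A,D^k)}$ is consistent. I would fix all these conventions once at the outset and sanity-check them against the small cases $k=1$ (where the formula gives $L(1,0)=S^3$) and $k=2$ (where it gives $L(2,1)=\R P^3$), the sign-flip between the $k>0$ and $k<0$ cases being exactly the expected orientation reversal.
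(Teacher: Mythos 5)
Your approach is genuinely different from the paper's and is essentially sound in structure. The paper stays inside Kirby calculus: it removes a solid torus from the disk open book of $S^3$, realizes the monodromy $D^k$ as $(\frac{1}{-k})$-surgery along the core $U'$ of the annulus page, observes $U\cup U'$ is a Hopf link, and slam-dunks to $k$-surgery on the unknot, then quotes the standard convention that this yields $L(k,-1)=L(k,k-1)$ for $k>0$ and $L(|k|,1)$ for $k<0$. You instead trivialize the mapping torus directly; your map $H$ is correct and well-defined on the quotient, the identification of the meridians $\mu_i$ with the $\theta$-circles is right, and the resulting genus-one Heegaard splitting with $\mu_0=e_\theta$, $\lambda_0=e_u$, $\mu_1=k\,e_u+e_\theta$ is a clean, self-contained derivation that makes the $k=0$ case ($\mu_1\mapsto\mu_0$, hence $S^1\times S^2$) and $H_1\cong\Z/k$ immediate. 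What the paper's route buys instead is that the orientation bookkeeping is outsourced to standard surgery conventions, and its surgery picture is reused later for the Seifert-fibered manifolds of Section 5.

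The genuine gap is the sign. Your product-basis computation gives $h_*(\mu_1)=\mu_0+k\lambda_0$; the passage to $-\mu_0+k\lambda_0$ is asserted (``after fixing orientations'') rather than derived, and that sign is precisely where the whole content of the claim lives: $L(k,1)$ and $L(k,k-1)$ are diffeomorphic by an orientation-reversing map, so distinguishing the $k>0$ answer from the $k<0$ answer \emph{is} the orientation question, and nothing else in the claim depends on it. Note that reversing the orientation of the curve $\mu_1$ only changes $(p,q)\mapsto(-p,-q)$, which is the same lens space; the needed relative sign must instead come from the fact that $h\colon\partial V_1\to\partial V_0$ is orientation-reversing when both solid tori are oriented compatibly with $M$, combined with fixed conventions for the page orientation, the induced binding orientation, the direction of increasing $\theta$, and which twist $(u,v)\mapsto(u\pm f(v),v)$ is the \emph{positive} one --- none of which your write-up pins down. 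Moreover, your proposed sanity checks cannot catch an error here: $S^3$ ($k=1$) and $\R P^3$ ($k=2$) both admit orientation-reversing self-diffeomorphisms, so both candidate answers $L(k,1)$ and $L(k,k-1)$ pass those tests. To close the gap, either carry oriented meridian--longitude bases through the gluing honestly, or test at $k=3$ with an orientation-sensitive invariant (the linking form distinguishes $L(3,1)$ from $L(3,2)$, since $2$ is not a square mod $3$), or cross-check against the surgery convention the paper uses, namely that $k$-surgery on the unknot is $L(k,-1)$.
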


\begin{proof}
Let $D_\circ \simeq D^2$ be a disk and $\gamma:= \partial D_\circ$.
Recall that $(D_\circ, id)$ is a planar open book decomposition for $(S^3, \xi_{std})$.
Let $D_\mu \subset D_\circ$ be a disc with boundary $\mu$.
The core of the solid torus $D_\mu \times S^1 \subset S^3$ is the unknot, $U$.  The meridian of the torus $T_\mu=\partial(D_\mu \times S^1)$ is $\mu$.
Pick a point $p \in \mu$, and define a longitude $\lambda$ of $T_\mu$ as $\lambda=\{p\} \times S^1$.
Remove $D_{\mu}\times S^1$ from $S^3$, and attach a new solid torus by identifying its meridian $m$ with $\lambda$ and its longitude $l$ with $-\mu$.
This is the $0$-surgery along the unknot $U$.
The resulting manifold is $S^1 \times S^2$.
In this way we get an open book decomposition $(A, id_A)$ for $S^1 \times S^2$, whose page $A$ is the union of the annulus $D_\circ \setminus D_\mu$, shaded in Figure~\ref{surgery}-(1), and the annulus bounded by $-l$ and the core $\gamma'$ of the solid torus, sketched in Figure~\ref{surgery}-(2).
\begin{figure}[htpb!]
\begin{center}
\begin{picture}(396, 108)
\put(0,0){\includegraphics{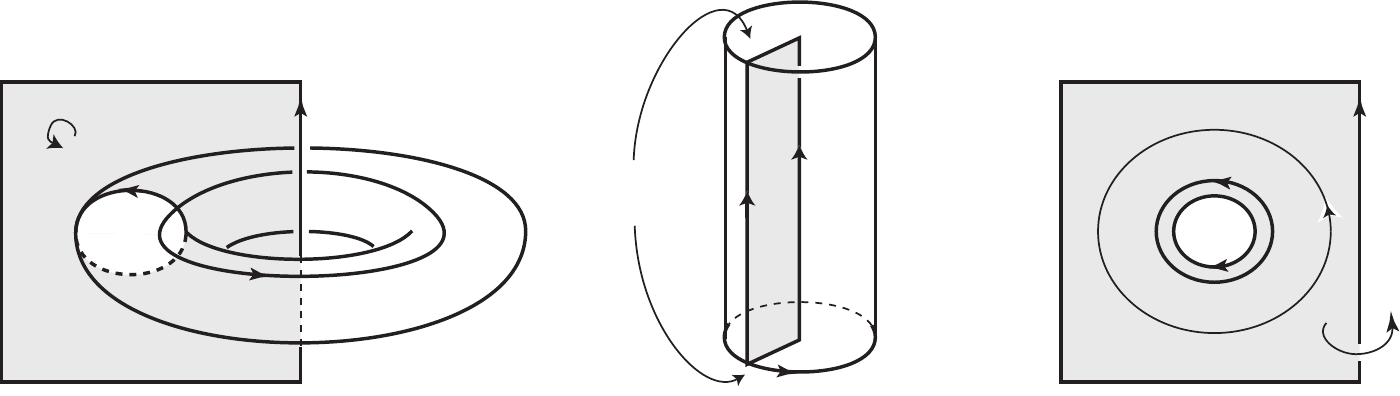}}

\put(-10, 95){(1)}
\put(33, 50){$\mu$} \put(90, 75){$\gamma$}
\put(100, 25){$\lambda$}
\put(135, 45){\small$T_\mu$}

\put(165, 95){(2)}
\put(235, 50){$\gamma'$}
\put(180, 55){$id$}
\put(225, -2){$m$}
\put(218, 70){$l$}

\put(285, 95){(3)}
\put(395, 60){$\gamma$}
\put(335, 65){\small $\mu=-l$} \put(345, 40){$\gamma'$}
\put(400, 27){\small $\lambda$}
\put(340, 15){$U'$}
\end{picture}
\caption{(1) Removing a solid torus $D_{\mu}\times S^1$ from $S^3$. (2) The attaching solid torus. (3) The page annulus $A$.}\label{surgery}
\end{center}
\end{figure}

The Dehn twist $D^k$ about the core $U' \subset (D_\circ \setminus D_\mu) \subset A$, sketched in Figure~\ref{surgery}-(3), of the page annulus $A$ is equivalent to applying $(\frac{1}{-k})$-surgery along the unknot $U'$.
The link $(U \cup U') \subset S^3$ is the positive Hopf link.
By the slam-dunk operation, the surgery description is reduced to the $k$-surgery along $U$, which represents $L(k, -1)=L(k, k-1)$ when $k>0$ and $L(|k|, 1)$ when $k<0$.
\end{proof}

Let $(M_{(A, D^k)}, \xi_k)$ be the contact manifold corresponds to the open book $(A, D^k)$.

\begin{claim}\label{goodman}
The contact manifold $(M_{(A, D^k)}, \xi_k)$ is overtwisted if and only if $k <0$.
When $k\geq 0$, this $\xi_k$ is the unique tight contact structure for $L(k, k-1)$.
\end{claim}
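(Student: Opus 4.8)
The plan is to treat the two implications separately, handling the tight range $k\geq 0$ by fillability together with a classification count, and the overtwisted range $k<0$ by Goodman's sobering-arc criterion.

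First I would establish tightness for $k\geq 0$. When $k>0$ the monodromy $D^k$ is a product of positive Dehn twists, so $(A,D^k)$ supports a Stein fillable contact structure by Giroux's work (and Loi--Piergallini); by the Gromov--Eliashberg theorem a fillable structure is tight, hence $\xi_k$ is tight. When $k=0$ the open book $(A,\mathrm{id})$ supports the standard, tight contact structure on $S^1\times S^2$. In particular $\xi_k$ is not overtwisted for any $k\geq 0$.

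Next I would prove the uniqueness assertion. By Claim~\ref{ob} we have $M_{(A,D^k)}=L(k,k-1)$ for $k>0$. Writing the continued fraction expansion
$$-\frac{k}{k-1}=[\,\underbrace{-2,-2,\dots,-2}_{k-1}\,],$$
Honda's classification of tight contact structures on lens spaces counts them as $\lvert(-2+1)^{\,k-1}\rvert=1$; for $k=1$ the manifold is $S^3$ and for $k=0$ it is $S^1\times S^2$, each of which carries a unique tight contact structure by Eliashberg. Therefore the tight $\xi_k$ produced above is \emph{the} tight contact structure on $L(k,k-1)$, as claimed.

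Finally, for $k<0$ I would apply Goodman's sobering-arc test. Let $l=\{\theta_0\}\times I$ be the spanning arc of $A$ meeting the core $\alpha$ exactly once. Since $k<0$, the image $D^k(l)$ wraps $\lvert k\rvert$ times around $A$ in the negative direction, so after isotoping to minimize intersections $l$ and $D^k(l)$ meet in $\lvert k\rvert$ interior points, all carrying the same (negative) sign. This exhibits $l$ as a sobering arc, and Goodman's theorem then forces $\xi_k$ to be overtwisted; combined with the previous paragraphs this yields the stated equivalence. The step I expect to be the main obstacle is the bookkeeping in the sobering-arc criterion: one must verify that the boundary endpoint contributions do not spoil the all-negative sign count, which is precisely where the strict hypothesis $k<0$ (as opposed to $k\leq 0$, where $D^0(l)=l$ produces no essential intersections) enters.
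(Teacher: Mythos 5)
Your proposal is correct and follows essentially the same route as the paper: tightness for $k\geq 0$ via Stein fillability of open books whose monodromy is a product of positive Dehn twists (the paper cites Etnyre--Honda's Lemma 3.2, you cite Giroux/Loi--Piergallini, the same mechanism), uniqueness via Eliashberg for $S^3$ and $S^1\times S^2$ and Honda's continued-fraction count $\lvert(-2+1)^{k-1}\rvert=1$, and overtwistedness for $k<0$ via Goodman's criterion, which the paper merely cites while you make the sobering arc explicit. One small correction to your bookkeeping: with $l$ and $D^k(l)$ isotoped to minimal position, the number of \emph{interior} intersections is $\lvert k\rvert-1$ rather than $\lvert k\rvert$ (the interior intersection and geometric intersection contributions then cancel algebraically), and the sobering inequality holds precisely because of the negative boundary contribution at the shared endpoints --- which is exactly the point you flagged as the main obstacle, so the argument goes through as you anticipated.
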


\begin{proof}
If $k<0$, Goodman's criterion for overtwistedness~\cite[Theorem 1.2]{Go} implies that $\xi_k$ is overtwisted.

When $k=0$, according to \cite[proof of Lemma 3.2]{EH} of Etnyre-Honda, the open book is a boundary of a positive Lefschetz fibration on a $4$-manifold $X$, so that $(S^1\times S^2, \xi_0)$ is Stein filled by $X$,  hence tight. 
Moreover, $\xi_0$ is the unique tight contact structure on $S^2\times S^1$ due to Eliashberg~\cite {El}.

When $k>0$, the monodromy is a product of positive Dehn twists. 
Etnyre-Honda's \cite[Lemma 3.2]{EH} guarantees that the contact structure compatible with such an open book is Stein fillable, hence tight. The uniqueness for $k>0$ follows from Honda's classification of tight contact structures for lens spaces \cite{H}. More precisely, we have
$$-\frac{k}{k-1} = -2 - \frac{1}{-2 - \frac{1}{-2 - \cdots -\frac{1}{-2}}} = [-2, -2, \cdots, -2], \mbox{ \small repeating $(k-1)$-times}$$
and $| (-2+1) (-2+1) \cdots (-2+1) |=1$, thus the manifold has the unique tight contact structure.
\end{proof}

We fix notations. See Figure~\ref{generators}.
Suppose we have a null-homologous closed braid $b$ of braid index $n$ in the open book $(A, D^k)$.
Let $\gamma \cup \gamma' =\partial A$ whose orientations are induced by that of $A$.
Let $A_\theta$ ($\theta \in [0,1]$) denote the page $A \times \{\theta\} \subset M_{(A, D^k)}$.
Under the identification $A=S^1 \times [0,1]$, we set $\alpha= S^1 \times \{\frac{1}{2}\}$. Let $\beta$ be a circle between $\alpha$ and $\gamma$ which is oriented clockwise.

\begin{figure}[htpb!]
\begin{center}
\begin{picture}(150, 160)
\put(0,0){\includegraphics{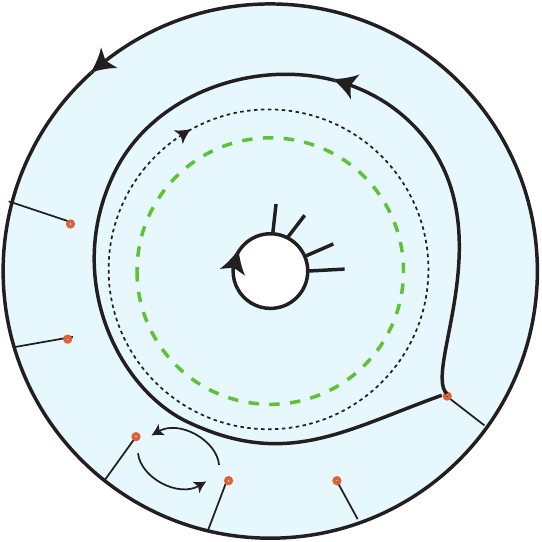}}
\put(150, 117){$\gamma$}
\put(70, 59){$\gamma'$}
\put(110, 70){$\alpha$}
\put(120, 80){$\beta$}
\put(115, 125){$\rho$}
\put(10, 82){$x_1$}
\put(13, 48){$x_2$}
\put(29, 29){$x_i$}
\put(40, 12){$\sigma_i$}
\put(65, 10){$x_{i+1}$}
\put(132, 40){$x_n$}
\put(-6, 91){$l_1$}
\put(-3, 51){$l_2$}
\put(25, 11){$l_i$}
\put(58, -10){$l_{i+1}$}
\put(140, 25){$l_n$}
\put(92, 70){$u_s$}
\put(68, 100){$u_1$}
\put(85, 97){$u_2$}
\end{picture}
\caption{}\label{generators}
\end{center}
\end{figure}

\begin{assumption}\label{position-assumption}

Choose points  $x_1, \cdots, x_n$ sitting {\em between} $\gamma$ and $\alpha$. By braid isotopy, which preserves the transverse knot class (Theorem~\ref{elena2}-(2)), we may assume that:
\begin{equation*}
b \cap A_0 = \{x_1, \cdots, x_n\}.
\end{equation*}

\end{assumption}

Let $\sigma_i$ ($i=1, \cdots, n-1$) be the generators of Artin's braid group $B_n$ satisfying $\sigma_i \ \sigma_{i+1}\ \sigma_i = \sigma_{i+1}\ \sigma_i\ \sigma_{i+1}$ and $\sigma_i \sigma_j = \sigma_j \sigma_i$ for $|i-j|\geq 2$.
Geometrically, $\sigma_i$ acts by switching the marked points $x_i$ and $x_{i+1}$ counterclockwise.
The circle $\beta$ will appear in Section~\ref{section F_b}.
Let $\rho$ be a braid element which moves $x_n$ once around the annulus in the indicated direction.

\begin{proposition}
An $n$-strand braid $b$ in $(A, D^k)$ has a braid word in $\{\s_1, \cdots, \s_{n-1}, \rho \}$.
\end{proposition}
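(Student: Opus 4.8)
The plan is to show that the mapping class group of the annulus $A$ relative to the marked points and boundary—equivalently, the relevant braid group of $A$—is generated by the local switches $\sigma_1, \dots, \sigma_{n-1}$ together with the rotation element $\rho$. Since a closed braid $b$ in $(A, D^k)$ is, by Definition~\ref{def-braid}, an $n$-valued section of the fibration $\pi$ over $S^1$, cutting the base circle at $\theta = 0$ presents $b$ as the trace of a motion of the $n$ marked points $\{x_1, \dots, x_n\}$ through the page $A$, returning to itself (up to the monodromy $D^k$, which acts trivially on the isotopy class of the marked-point configuration since the $x_i$ lie on one side of the core $\alpha$). Thus a braid word for $b$ is precisely an element of the surface braid group $B_n(A)$, and the task reduces to producing a generating set.

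\textbf{The main steps.} First I would track the motion of the $n$ points as $\theta$ runs over $[0,1]$, and partition this isotopy into elementary moves: at generic times the points move without collision, and at isolated times either two adjacent points swap in a small disk (contributing $\sigma_i^{\pm 1}$) or a point travels around the hole of the annulus (contributing a power of $\rho$). This is the standard decomposition of a path in the configuration space $\mathrm{Conf}_n(A)$ into moves supported in a local disk and moves detecting the first homology of $A$. Second, I would identify $B_n(A)$ with the fundamental group $\pi_1(\mathrm{Conf}_n(A)/S_n)$ of the unordered configuration space and recall (or verify directly) that $A$ deformation retracts onto its core circle $\alpha \simeq S^1$, so that $\mathrm{Conf}_n(A)$ has the homotopy type of $\mathrm{Conf}_n(S^1 \times \R) \simeq \mathrm{Conf}_n(\text{cylinder})$. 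The braid group of the annulus (equivalently, the affine braid group of type $A$, or the braid group of the cylinder) is classically generated by the $n-1$ band generators $\sigma_i$ that braid adjacent strands in a disk together with one additional generator that carries a strand around the annulus once—this is exactly $\rho$.

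\textbf{The key point and the obstacle.} The essential content is that a single rotation generator $\rho$ (moving $x_n$ once around) suffices to generate all "around-the-hole" motions of \emph{every} strand, not just the outermost one: conjugating $\rho$ by appropriate products of the $\sigma_i$ yields the rotation of any other marked point, since the $\sigma_i$ permute the points transitively. I expect the main obstacle to be bookkeeping rather than conceptual—namely, making the reduction to elementary moves rigorous, ensuring that a general isotopy of the configuration can be put in a normal form where local swaps and hole-circulations occur at distinct times, and confirming that no independent generator is needed for the annulus beyond $\{\sigma_1, \dots, \sigma_{n-1}, \rho\}$. One must also check that the monodromy $D^k$, being supported in a neighborhood of the core $\alpha$ disjoint from the arc carrying the $x_i$ between $\gamma$ and $\alpha$ (Assumption~\ref{position-assumption}), does not introduce extra generators when the page is reglued; this follows because $D^k$ fixes the relevant configuration up to isotopy within the region containing the marked points, so the closing-up relation is already expressible in the chosen generators.
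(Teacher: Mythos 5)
Your proposal is correct, but it proves the statement by a genuinely different route than the paper. You work intrinsically with the annular braid group, identifying a closed braid in $(A,D^k)$ with a loop in the unordered configuration space $\mathrm{Conf}_n(A)/S_n$ (after checking, as you do, that $D^k$ fixes the marked points since its support near $\alpha$ is disjoint from the arc carrying $x_1,\dots,x_n$), decomposing a generic such loop into elementary moves, and invoking the classical fact that the braid group of the annulus (the type-$B$/affine braid group) is generated by $\sigma_1,\dots,\sigma_{n-1}$ together with one hole-circulation $\rho$, the key point being that rotations of the other strands are conjugates of $\rho$ by words in the $\sigma_i$. The paper instead fills in the hole of $A$ and adds a phantom $(n{+}1)$-st strand at the center point $o$, so that $b$ becomes part of an $(n{+}1)$-strand braid $\tilde b$ in Artin's group $B_{n+1}$; the winding of $o$ relative to the $x_i$ is an element of $\pi_1(D_\circ\setminus\{x_1,\dots,x_n\},o)$ with generators $\rho_1,\dots,\rho_n$, which are then rewritten explicitly as $\rho_i=\sigma_i^{-1}\cdots\sigma_{n-1}^{-1}\,\sigma_n^2\,\sigma_{n-1}\cdots\sigma_i$ and $\rho_n=\sigma_n^2=\rho$. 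Note that the paper's formula is exactly your conjugation observation, made concrete in the ambient group $B_{n+1}$. What each approach buys: the paper's argument is elementary and self-contained, requiring nothing beyond Artin's braid group and a picture, whereas yours is more conceptual and generalizes readily to other surfaces (indeed the paper's pants case in Section~5 is stated in essentially your language), but to be complete it must either cite the known presentation of the annular braid group or carry out the general-position bookkeeping you defer, which is where the real content of the statement lives.
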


\begin{proof}
Let $D_\bullet \subset D_\circ$ be concentric disks of center $o$.
Identify the annulus $A$ with $D_\circ \setminus D_\bullet$ and $\partial D_\bullet = -\gamma',$ $\partial D_\circ = \gamma$.
Consider the union $\tilde{b}:=b \cup (\{o\} \times [0,1]) \subset D_\circ \times [0,1]$, which we identify with an $(n+1)$-strand braid in Artin's braid group $B_{n+1}$.
Let $p: D_\circ \times [0,1] \to D_\circ$ be the projection onto the first factor.
Up to homotopy, we can think that $p(\{o\} \times [0,1])$ is a (non-simple) closed  curve in $D_\circ \setminus \{ x_1, \cdots, x_n \}$.
Denote its homotopy class by
$$b_\bullet := [p(\{o\} \times [0,1])] \in \pi_1(D_\circ\setminus \{ x_1, \cdots, x_n \}, o).$$
Let $\rho_1, \cdots, \rho_n$ be generators of $\pi_1(D_\circ\setminus \{ x_1, \cdots, x_n \}, o)$ as in Figure~\ref{generators-2}-(1).
\begin{figure}[htpb!]
\begin{center}
\begin{picture}(340, 125)
\put(0,0){\includegraphics{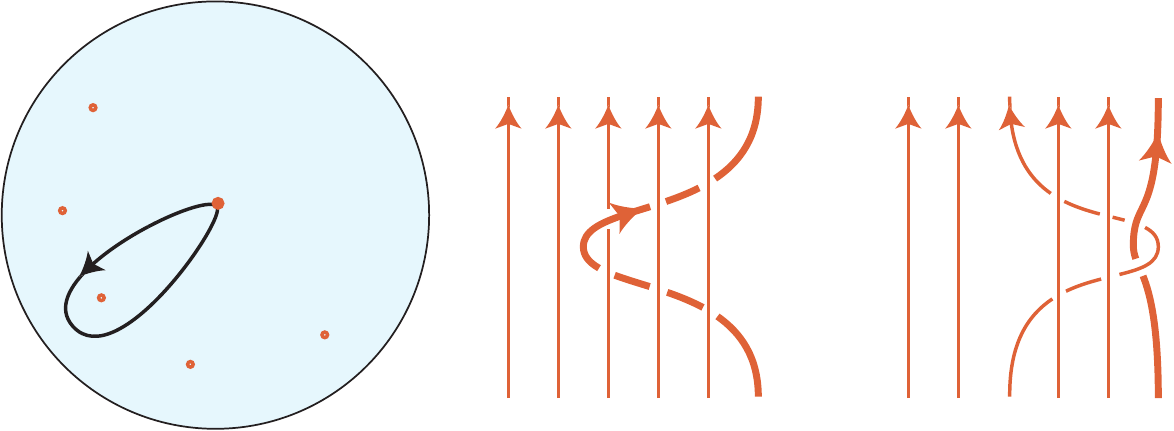}}
\put(25, 100){$x_1$}
\put(32, 37){$x_i$}
\put(90, 32){$x_n$}
\put(10, 45){$\rho_i$}
\put(66, 69){$o$}
\put(144, 100){$1$}
\put(260, 100){$1$}
\put(175, 100){$i$} \put(290, 100){$i$}
\put(203, 100){$n$} \put(318, 100){$n$}
\put(218, 100){$o$} \put(333, 100){$o$}
\put(0, 115){(1)} \put(140, 115){(2)} \put(252, 115){(3)}
\end{picture}
\caption{}\label{generators-2}
\end{center}
\end{figure}
The transition from Figure~\ref{generators-2}-(2) to (3) shows:
\begin{eqnarray*}
\rho_i &=& \sigma_i^{-1} \cdots \sigma_{n-1}^{-1}\ \sigma_n^2\ \sigma_{n-1} \cdots \sigma_i, \quad (i=1, \cdots, n-1), \\ 
\rho_n &=& \sigma_n^2.
\end{eqnarray*}
Since our $\rho=\rho_n$ is equal to $\sigma_n^2$ in the braid group $B_{n+1}$, the  braid $\tilde{b}$ can be written in letters $\{\sigma_1, \cdots, \sigma_{n-1}, \rho\}$.
Since $b \subset \tilde{b}$, the statement of the proposition follows.
\end{proof}

\begin{definition}\label{def of a_rho}
Let $a_\s \in \Z$ (resp. $a_\rho\in \Z$) be the exponent sum of $\s_1, \cdots, \s_{n-1}$'s (resp. $\rho$) in the braid word of $b$.
\end{definition}

\begin{proposition}\label{assumption for a_rho}
If $k> 0$ (resp. $k < 0$), we may assume that $a_\rho \geq 0$ (resp. $a_\rho \leq 0$).
\end{proposition}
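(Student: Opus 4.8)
The plan is to use that $a_\rho$ is only an invariant of the braid up to conjugacy, and is \emph{not} an invariant of the transverse link type that $b$ represents. I will produce a transverse isotopy relating $b$ to a braid $b'$ with the same braid index $n$ and the same local exponent sum $a_\sigma$, but with rotation number $k-a_\rho$ in place of $a_\rho$. Granting this, the proposition follows from a one-line sign count: for $k>0$ at least one of $a_\rho$ and $k-a_\rho$ is $\geq 0$ (if $a_\rho<0$ then $k-a_\rho=k+\abs{a_\rho}>0$), and for $k<0$ at least one of them is $\leq 0$ (if $a_\rho>0$ then $k-a_\rho<0$). So after replacing $b$ by $b'$ when necessary we may assume that $a_\rho$ has the sign of $k$.

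The move producing $b'$ is a transverse isotopy built from the interaction between $\rho$ and the monodromy. A factor $\rho$ drags the strand through $x_n$ once around the hole of the page in the region between $\alpha$ and $\gamma$, while $D^k$ is the $k$-fold \emph{positive} Dehn twist about the core $\alpha$, supported in a parallel annular band. I would take the annular strip swept out by the winding strand and push it across $\alpha$ into the band where $D^k$ is supported, then use the monodromy gluing on $A_1=A_0$ to absorb the $k$-fold twist before pushing the strand back out; the winding $a_\rho$ measured on the $\gamma$-side of $\alpha$ is thereby converted into the complementary winding $k-a_\rho$, the term $k$ being exactly the $k$ turns contributed by the positive twist and the sign being fixed by the orientation of $\rho$ relative to $D$. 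This alters neither the number of strands nor the local crossings, so $n$ and $a_\sigma$ are unchanged.

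The main obstacle is to carry out this isotopy rigorously. I must keep the moving arc transverse to every page $A_\theta$ so that $b$ stays a closed braid throughout (or at least so that the initial and final configurations are transversely isotopic braids), and I must confirm that the net change is precisely $a_\rho\mapsto k-a_\rho$, with the correct sign, rather than some other combination of $a_\rho$ and $k$. Getting this right requires a careful comparison, inside a single page, of the homotopy class of the strand before and after it has been pushed through the twisting region, tracking the interplay between the winding direction of $\rho$ and the twisting direction of the positive Dehn twist $D$.

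As a consistency check that the move can genuinely be a transverse isotopy, note that the self-linking formula of Theorem~\ref{sl-thm} must be left unchanged by it. Writing $q(a_\rho)=a_\rho(1-a_\rho/k)$ for the $\rho$-dependent part of the formula, one computes that $q(x)=q(y)$ if and only if $y=x$ or $y=k-x$; hence $x\mapsto k-x$ is the unique nontrivial affine symmetry of the formula that fixes $n$ and $a_\sigma$. This is exactly the replacement $a_\rho\mapsto k-a_\rho$ above, so such a move leaves $sl(b)$ invariant, as any transverse isotopy must, which is strong evidence that the proposed move is the correct one.
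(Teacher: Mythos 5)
There is a genuine gap: the move on which everything rests---a transverse isotopy fixing $n$ and $a_\sigma$ while sending $a_\rho\mapsto k-a_\rho$---is never constructed (you flag it yourself as ``the main obstacle''), and as described it cannot exist. The complement of the binding fibers over $S^1$ with annulus fiber, and the monodromy $D^k$ acts trivially on $H_1$ of the fiber, so $H_1\bigl(M_{(A,D^k)}\setminus\partial A\bigr)\cong\Z\oplus\Z$ and $a_\rho$ is precisely the core component of $[b]$ there (given Assumption~\ref{position-assumption}); hence $a_\rho$ is invariant under braid isotopy. By Theorem~\ref{elena2}-(2), any transverse isotopy between closed braids factors through braid isotopies and positive (de)stabilizations, so any change in $a_\rho$ must come from stabilizations about $\gamma'$, which change $n$---contradicting your claim that $n$ is preserved. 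Your ``push across $\alpha$ and absorb the monodromy'' heuristic, made rigorous, is exactly the paper's mechanism, but it produces a \emph{translation}, not a reflection: each absorption of the twist inserts $\rho^{\pm k}$, i.e.\ $a_\rho\mapsto a_\rho\pm k$, and in the paper this is implemented as a positive stabilization about $\gamma'$ with data change $n\to n+1$, $a_\sigma\to a_\sigma+1+2a_\rho$, $a_\rho\to a_\rho+k$, iterated until $a_\rho$ has the sign of $k$. Note the proposition only says ``we may assume,'' so there was no need to preserve $n$ or $a_\sigma$; that self-imposed constraint is what forces you toward a nonexistent move, since with $n$ fixed the reflection would require destabilizations that are not generally available, and each $\gamma'$-stabilization shifts $a_\sigma$ by $1+2a_\rho$, so $a_\sigma$ cannot be held fixed in general either.

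Your consistency check is also circular: Theorem~\ref{sl-thm} is proved downstream of Proposition~\ref{assumption for a_rho} (via Corollary~\ref{def of s} and the surface construction, which use $a_\rho=sk$ with $s\geq 0$), so invariance of $q(x)=x(1-x/k)$ under $x\mapsto k-x$ cannot be cited as evidence for the isotopy; and in any case agreement of self-linking numbers is far from implying transverse isotopy. The correct route is the paper's: stabilize positively about $\gamma'$, accept the changes in $n$ and $a_\sigma$, and add $k$ to $a_\rho$ as many times as needed.
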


To prove Proposition~\ref{assumption for a_rho}, we first define braid stabilization and recall its properties.

\begin{definition}
Let $b$ be a closed braid in an open book $(\Sigma, \phi)$.
Suppose that $\lambda \subset \partial \Sigma$ is one of the bindings of the open book and $p\in (\Sigma_\theta \cap b)$ is a point, see Figure~\ref{stabilization}.
Join $p$ and a point on $\lambda$ by an arc $a \subset (\Sigma_\theta \setminus b)$.
A {\em positive $($negative$)$ stabilization} of $b$ about $\lambda$ along $a$ is pulling a small neighborhood of $p$ of the braid, then adding a positive (negative) kink about $\lambda$ in a neighborhood of $a$.
\end{definition}
\begin{figure}[htpb!]
\begin{center}
\begin{picture}(300, 85)
\put(0,0){\includegraphics{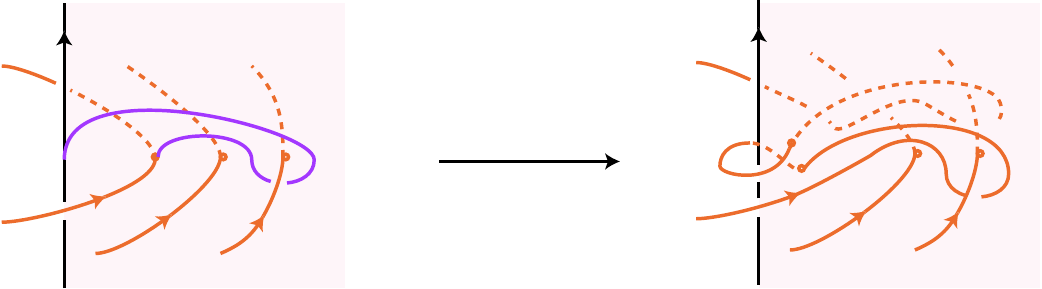}}
\put(118, 49){\small positive}
\put(118, 40){\small stabilization}
\put(60, 50){$a$}
\put(37, 35){$p$}
\put(80, 70){$\Sigma_\theta$}
\put(8, 70){$\lambda$}
\end{picture}
\caption{Positive braid stabilization along $a$.}\label{stabilization}
\end{center}
\end{figure}

The second author proved Markov theorem in a general open book setting:

\begin{theorem}\cite[Theorem 4.1.3 and 4.1.4]{P}\label{elena2}
\begin{enumerate}
\item Two closed braids $K_1$ and $K_2$ in an open book decomposition have the same topological type if and only if they are related by braid isotopy, positive and negative braid stabilizations.
\item The above $K_1, K_2$ are transversely isotopic if and only if they are related by braid isotopy and positive braid stabilizations.
\end{enumerate}
\end{theorem}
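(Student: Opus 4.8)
The plan is to find a single braid move that (i) keeps $b$ in its transverse isotopy class and (ii) changes $a_\rho$ by an amount whose sign is that of $k$; iterating it then pushes $a_\rho$ into the desired half-line. The move I would use is a \emph{positive} stabilization about the inner binding component $\gamma'$, the one that the generator $\rho$ encircles (Figure~\ref{generators}). By Theorem~\ref{elena2}(2) a positive stabilization alters $b$ only within its transverse isotopy class, so any number of such moves is free for the purposes of this proposition. Everything then reduces to one computation: I claim that a single positive stabilization about $\gamma'$ sends $a_\rho \mapsto a_\rho + k$, while raising the braid index by one and modifying $a_\sigma$.

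To see the increment $k$, recall that by Assumption~\ref{position-assumption} the braid is normalized with all $n$ marked points in the outer subannulus, between $\gamma$ and the core $\alpha$, whereas stabilizing about $\gamma'$ produces a new strand running across $\alpha$ to a point near $\gamma'$ along the chosen arc $a$. The local kink and the arcs needed to reinsert the new strand among $x_1, \dots, x_n$ contribute only to $a_\sigma$ and to the braid index, not to the winding $a_\rho$: the kink wraps the binding $\gamma'$ meridionally and so carries no around-the-hole (longitudinal) winding. The around-the-hole winding is produced entirely by the monodromy. Indeed, because the arc $a$ crosses the collar of $\alpha$, bringing the new closed strand back to the normal form of Assumption~\ref{position-assumption} drags it through the support of $D^k = D_\alpha^k$; since each of the $k$ positive Dehn twists about $\alpha$ forces one extra revolution about the hole, the normal-form braid word picks up exactly $\rho^{k}$. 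Hence $\Delta a_\rho = k$ with the stated sign.

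Granting this, the proposition follows at once. If $k > 0$ then $a_\rho + k > a_\rho$, so finitely many positive stabilizations about $\gamma'$ raise $a_\rho$ until $a_\rho \geq 0$; if $k < 0$ the very same moves lower $a_\rho$ until $a_\rho \leq 0$. In either case Theorem~\ref{elena2}(2) guarantees that the result is transversely isotopic to $b$, and since $a_\rho$ changes only by the multiple $k$ the link stays null-homologous, so the normalization is legitimate. I expect the main obstacle to be the sign-and-magnitude bookkeeping of the middle paragraph: one must verify rigorously that closing the new strand up through $D_\alpha^k$ contributes precisely $\rho^{+k}$ --- neither $\rho^{-k}$ nor $\rho^{k}$ together with a stray $\rho^{\pm 1}$ surviving from the kink --- and that the repositioning letters genuinely leave $a_\rho$ fixed. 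A reassuring consistency check is that the substitution $(n, a_\sigma, a_\rho) \mapsto (n+1,\, a_\sigma + 1 + 2a_\rho,\, a_\rho + k)$ leaves the quantity $-n + a_\sigma + a_\rho(1 - a_\rho/k)$ of Theorem~\ref{sl-thm} unchanged, exactly as a positive stabilization ought to preserve the self-linking number.
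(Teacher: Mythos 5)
Your proposal does not address the statement at issue. The statement is Theorem~\ref{elena2}, the generalization of the Markov theorem to arbitrary open book decompositions: two closed braids are topologically equivalent if and only if they are related by braid isotopy and positive/negative stabilizations, and transversely isotopic if and only if they are related by braid isotopy and positive stabilizations alone. What you have written is instead a proof of Proposition~\ref{assumption for a_rho} (that for $k>0$ one may normalize $a_\rho\geq 0$, and for $k<0$, $a_\rho\leq 0$): your move of positively stabilizing about $\gamma'$, the computation $\Delta a_\rho = k$ coming from dragging the new strand through the support of $D_\alpha^k$, and the bookkeeping $(n,a_\sigma,a_\rho)\mapsto(n+1,\,a_\sigma+1+2a_\rho,\,a_\rho+k)$ all reproduce, essentially verbatim, the paper's proof of that proposition (including the relation $\rho_n=\sigma_n\rho_{n+1}\sigma_n$ implicit in your $a_\sigma$-count and the consistency check against Theorem~\ref{sl-thm}). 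As an argument for Theorem~\ref{elena2} it is moreover circular: you invoke Theorem~\ref{elena2}(2) in your very first paragraph to justify that positive stabilization preserves the transverse class, so the statement to be proved is an input to your argument, not an output.

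The genuine content of Theorem~\ref{elena2} is the \emph{hard} direction: given two closed braids in $(\Sigma,\phi)$ that are merely topologically isotopic (respectively transversely isotopic) as links in $M_{(\Sigma,\phi)}$, one must produce a finite sequence of braid isotopies and stabilizations connecting them. Nothing in your proposal engages with this; there is no mechanism for taking an ambient isotopy that destroys braid position and factoring it into the allowed moves (the standard approaches push the isotopy to be generic with respect to the pages and analyze the singular foliation the link traces out, as in the Birman--Menasco style arguments generalized in \cite{P}). The paper itself offers no proof either --- it cites \cite[Theorems 4.1.3 and 4.1.4]{P} --- so the appropriate blind attempt would have been to reconstruct or at least outline that foliation-based argument, not to verify the word-level effect of a single stabilization. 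Your computation is correct and useful where it belongs (Proposition~\ref{assumption for a_rho}), but as a proof of Theorem~\ref{elena2} it is a category error: it establishes nothing about either direction of either equivalence.
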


\begin{proof}[Proof of Proposition~\ref{assumption for a_rho}]

Suppose $b$ is an $n$-strand braid.
Recall that $(A, D^k)$ has two binding components, $\gamma$ and $\gamma'$.
Let $a$ be an arc joining $x_n$ and $\gamma'$ and intersecting $\alpha$ at a point as sketched in Figure~\ref{x_n+1}.
\begin{figure}[htpb!]
\begin{center}
\begin{picture}(160, 160)
\put(0,0){\includegraphics{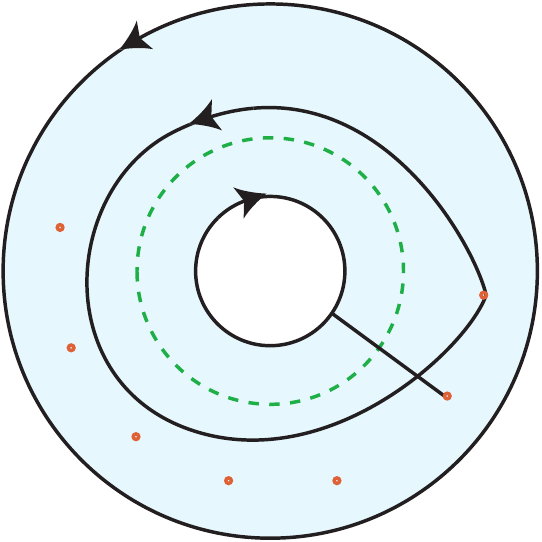}}
\put(16, 140){$\gamma$}
\put(70, 87){$\gamma'$}
\put(113, 80){$\alpha$}
\put(124, 110){$\rho_{n+1}$}
\put(10, 82){$x_1$}
\put(13, 48){$x_2$}
\put(132, 40){$x_n$}
\put(143, 70){$x_{n+1}$}
\put(95, 55){$a$}
\end{picture}
\caption{Definitions of $a, x_{n+1}$ and $\rho_{n+1}$.}\label{x_n+1}
\end{center}
\end{figure}
Pick a small line segment of the $n^{\rm th}$ strand in $A \times (1-\epsilon, 1)$, near the top page $A_{\theta=1}$ of the open book, and positively stabilize it along $a$.
As a consequence, it gains a new braid strand, which we call $\nu$, lying in a small tubular neighborhood of $\gamma'$, see Figure~\ref{rho-n}-(1).
\begin{figure}[htpb!]
\begin{center}
\begin{picture}(416, 180)
\put(0,0){\includegraphics{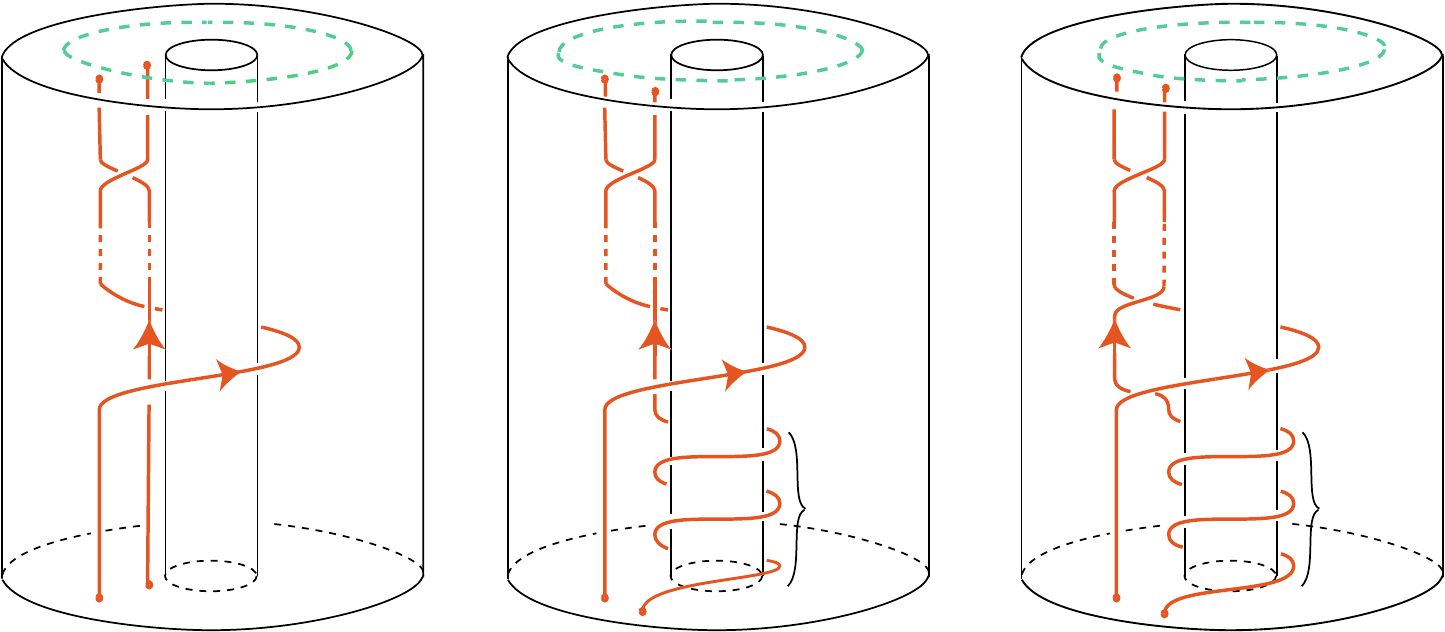}}
\put(0, 179){(1)}
\put(150, 179){(2)}
\put(295, 179){(3)}
\put(25, 165){$x_n$}
\put(166, 165){$x_n$}
\put(310, 165){$x_n$}
\put(40, 167){$\nu$}
\put(55, 160){$\gamma'$}
\put(90, 161){$\alpha$}
\put(235, 161){$\alpha$}
\put(388, 162){$\alpha$}
\put(180, 160){$x_{n+1}$}
\put(325, 160){$x_{n+1}$}
\put(210, 150){$\gamma$}
\put(234, 33){$(\rho_{n+1})^k$}
\put(382, 33){$(\rho_{n+1})^k$}
\put(310, 66){$\sigma_n$}
\put(310, 94){$\sigma_n$}
\put(310, 130){$\sigma_n$}
\put(384, 78){$\rho_{n+1}$}
\put(88, 78){$\rho_n$}
\put(234, 78){$\rho_n$}
\end{picture}
\caption{(1) Positive stabilization about the binding $\gamma'$. (2) Transversely isotope $\nu$ near $\gamma'$ to $x_{n+1}$ near $\gamma$. This introduces $k$ additional $\rho_{n+1}$'s. (3) $\rho_n$ and $\rho_{n+1}$ are related by $\rho_n = \sigma_n \rho_{n+1} \sigma_n$. }\label{rho-n}
\end{center}
\end{figure}
Put a point $x_{n+1} \subset A$ on the right side of $x_n$ between $\gamma$ and define $\rho_{n+1}$ a braid generator as in Figure~\ref{x_n+1}.
Move $\nu$ by a braid isotopy supported in $A \times (1-\epsilon, 1+\epsilon)$ so that $\nu$ intersects the page $A_0=A_1$ at $x_{n+1}$.
This isotopy introduces $(\rho_{n+1})^k$ in $A \times (0, \epsilon)$ as a consequence of the monodromy $D^k$. Compare Figure~\ref{rho-n}-(1) and (2).

We observe that in a stabilized braid, $\rho_{n+1}$ plays the role of the old $\rho=\rho_n$ and as Figure~\ref{rho-n}-(3) shows, they are related by: 
\begin{equation}
\rho_n = \sigma_n \rho_{n+1} \sigma_n.
\end{equation}
Thus a positive stabilization about $\gamma'$ takes a word $b$ to $(\rho_{n+1})^k \ \tilde{b} \ \sigma_{n},$ where $\tilde{b}$ is obtained from $b$ replacing each $\rho$ with $\sigma_n \rho_{n+1} \sigma_n$. The data change in the following way:
$$
n  \to n+1, \ \
a_{\sigma}  \to   a_{\sigma}+1 + 2 a_\rho, \ \
a_{\rho}  \to  a_{\rho}+k.
$$
Theorem~\ref{elena2}-(2) tells that a positive stabilization preserves the transverse knot type, so if $k>0$ (resp. $k<0$) we may assume that $a_\rho\geq 0$ (resp. $a_\rho \leq 0$).
\end{proof}

The next corollary introduces a number $s$:

\begin{corollary}\label{def of s}
If $k\neq 0$ there exists a non-negative integer $s$ such that $a_\rho = sk$. If $k=0$ then $a_\rho =0$.
\end{corollary}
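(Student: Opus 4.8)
The plan is to reduce the statement to a homological computation in $M_{(A, D^k)}$ and to read off the divisibility of $a_\rho$ by $k$ directly from the hypothesis that $b$ is null-homologous; the non-negativity of $s$ will then be immediate from Proposition~\ref{assumption for a_rho}. The essential point is to show that the class of the closed braid in $H_1(M_{(A, D^k)})$ equals exactly $a_\rho\,[\alpha]$, where $\alpha$ is the core circle of the page (Figure~\ref{generators}). Everything else is bookkeeping.

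First I would pin down $H_1(M_{(A, D^k)})$ together with a preferred generator. By Claim~\ref{ob}, $M_{(A, D^k)}$ is $L(k, k-1)$, $S^1\times S^2$, or $L(|k|,1)$, realized by $k$-surgery on the unknot $U$ in the surgery picture of that proof; hence $H_1(M_{(A, D^k)})\cong \Z/|k|$ for $k\neq 0$ and $\cong\Z$ for $k=0$. I would identify the generator with the class $[\alpha]$ of the page core, using that $\alpha$ is isotopic to the meridian $\mu$ of $U$, so that $[\alpha]$ has order exactly $|k|$ (and generates $\Z$ freely when $k=0$); in particular $k\,[\alpha]=0$. Next I would compute $[b]$ from a braid word in $\sigma_1,\dots,\sigma_{n-1},\rho$ (Definition~\ref{def of a_rho}): the local generators $\sigma_i$ contribute nothing to the class around the hole, while each $\rho$ contributes one copy of $[\alpha]$, and the page-rotation direction contributes nothing because a fiber circle through an interior point bounds a meridian disk of the binding and is null-homologous. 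This would give $[b]=a_\rho\,[\alpha]$. Imposing $[b]=0$ then yields $a_\rho\,[\alpha]=0$, hence $k\mid a_\rho$ when $k\neq 0$, say $a_\rho=sk$, and $a_\rho=0$ when $k=0$. Finally, Proposition~\ref{assumption for a_rho} lets me assume $a_\rho\geq 0$ for $k>0$ and $a_\rho\leq 0$ for $k<0$; in either case $s=a_\rho/k\geq 0$, as claimed.

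I expect the main obstacle to be the rigorous justification that $[b]=a_\rho\,[\alpha]$, with no contribution from either the $n$-fold wrapping in the page-rotation direction or from the gluing by the monodromy $D^k$. The key geometric fact is that $D^k$ is supported near $\alpha$ and restricts to the identity on the outer region between $\alpha$ and $\gamma$ that contains the marked points $x_1,\dots,x_n$ (Assumption~\ref{position-assumption}); consequently the monodromy reconnects the strand endpoints without introducing extra winding around the hole, and a fiber circle through such a point can be slid out to the binding $\gamma$, where it bounds. Making these two vanishing statements precise—ideally by exhibiting explicit bounding surfaces, or by computing the linking number $\mathrm{lk}(b,U)$ in the surgery picture and reducing modulo $k$—is where the real work lies.
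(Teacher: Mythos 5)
Your proposal is correct and follows essentially the same route as the paper: the authors likewise observe that $[b]+a_\rho[\beta]=0$ in $H_1(M_{(A,D^k)};\Z)$ (your $[b]=a_\rho[\alpha]$ up to the orientation convention on the core-parallel circle), identify that class with the meridian $\mu$ of $U$ generating $H_1\cong\Z/k\Z$ to get $a_\rho\equiv 0 \pmod{k}$, and invoke Proposition~\ref{assumption for a_rho} for $s\geq 0$. The vanishing you flag as the "real work" is handled in the paper exactly as you suggest, via the disks $\delta_i$ swept by the arcs $l_i$, which exist because Assumption~\ref{position-assumption} places the $x_i$ between $\gamma$ and $\alpha$, away from the support of $D^k$.
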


\begin{proof}
In the homology group $H_1(M_{(A, D^k)}, \Z)$, we have $[b] + a_\rho [\beta]=0$. 
Since the braid $b$ is null-homologous $a_\rho [\beta] = [b]=0$.
The meridian $\mu$ introduced in the proof of Claim~\ref{ob} is a generator of $H_1(M_{(A, D^k)}, \Z)=\Z/k\Z$.
Since $a_{\rho}[-\mu]=a_{\rho}[\beta]=0$ we have $a_{\rho}\equiv 0$ (mod $k$), implying the existence of $s\in\Z$ with $a_\rho = sk$ for $k\neq 0$.
Proposition~\ref{assumption for a_rho} guarantees that we may assume $s \geq 0$.
When $k=0$, we have $a_\rho=0.$
\end{proof}


\section{Construction of Seifert surface $\Sigma_b$}\label{section 2}

The goal of this section is to construct a Seifert surface $\Sigma_b$ for a null-homologous braid $b$ whose braid word is written in $\{\s_1, \cdots, \s_{n-1}, \rho \}$.
(By abuse of notation, we use $b$ for both the closed braid and its braid word.)
We first construct a surface $F_b$ and change it to $\tilde{F}_b$. We further deform $\tilde{F}_b \to \check F_b \to \hat{F}_b$ and finally obtain $\Sigma_b$.


\subsection{Construction of the surface $F_b$ }\label{section F_b}

Let $l_i \subset A$ be a line segment perpendicular to $\gamma$ having $x_i$ as one of its endpoints and with the other end on $\gamma$, see Figure~\ref{generators}. Since $l_i$ is disjoint from the Dehn twist curve $\alpha$, in the
resulting manifold, $M_{(A, D^k)}$, the arc $l_i$ swipes a disk
$\de_i:=(l_i \times [0,1])/\sim$. See Figure~\ref{disks}. The center
of $\de_i$ is $l_i \cap \gamma$. We orient $\de_i$ so that the
binding $\gamma$ is positively transverse to $\de_i$.

\begin{figure}[htpb!]
\begin{center}
\begin{picture}(116, 125)
\put(0,0){\includegraphics{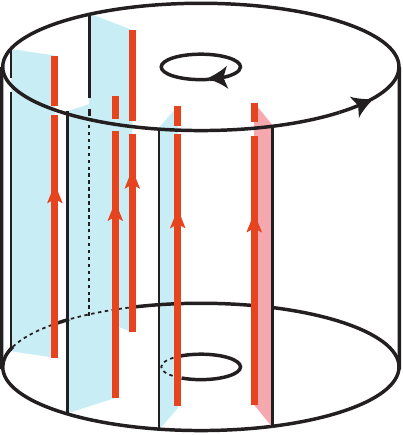}}
\put(29, 105){$\de_1$}
\put(6, 50){$\de_2$}
\put(65, 50){$\de_n$}
\end{picture}
\caption{Oriented disks $\de_1, \cdots, \de_n$. Positive (negative) side is light blue (dark pink). }\label{disks}
\end{center}
\end{figure}

Suppose the braid word for $b$ has length $m$. If the $j^{th}$ ($1
\leq j \leq m$) letter is $\sigma_i$ (resp. $\s_i^{-1}$) then we
join the disks $\de_i$ and $\de_{i+1}$ by a positively (resp.
negatively) twisted band embedded in the set of pages $\{ A_\theta \ | \ \frac{j-1}{m} < \theta < \frac{j}{m}\}$. See Figure~\ref{surface1}-(1).

\begin{figure}[htpb!]
\begin{center}
\begin{picture}(360, 157)
\put(0,0){\includegraphics{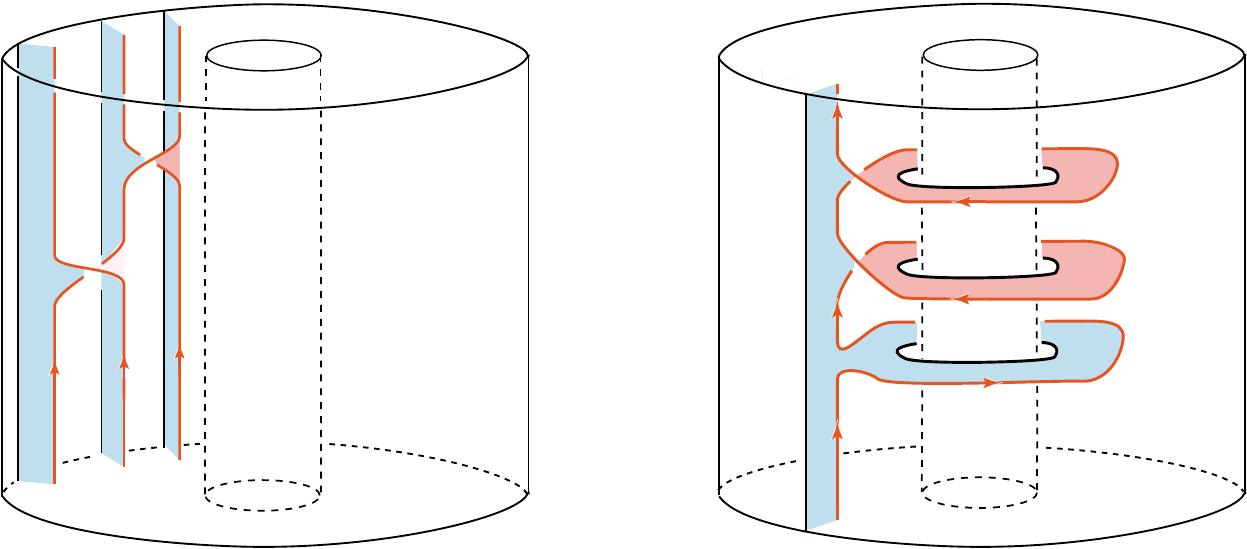}}
\put(-16, 140){(1)}
\put(190, 140){(2)}
\put(325, 110){\scriptsize{$\rho^{-1}$}}
\put(325, 80){\scriptsize{$\rho^{-1}$}}
\put(325, 60){\scriptsize{$\rho$}}
\put(234, 122){\scriptsize{$\de_n$}}
\put(270, 110){\scriptsize{$-\be$}}
\put(270, 85){\scriptsize{$-\be$}}
\put(280, 60){\scriptsize{$\be$}}
\put(80, 113){\vector(-1, 0){30}}
\put(83, 111){\tiny Positive band}
\put(65, 82){\vector(-1, 0){30}}
\put(65, 80){\tiny Negative band}
\end{picture}
\caption{Construction of $F_b$. (1) Twisted bands. (2) ${\mathfrak A}$-annuli. }\label{surface1}
\end{center}
\end{figure}

If the $j^{th}$ letter is $\rho$ (resp. $\rho^{-1}$), then we attach to the disk $\de_n$ an annulus embedded in $\{ A_\theta \ | \ \frac{j-1}{m} < \theta < \frac{j}{m}\}$. We call such an annulus an {\em ${\mathfrak A}$-annulus}.
See Figure~\ref{surface1}-(2). Let $\beta \subset A$ be an oriented circle between circles $\alpha$ and $\gamma$ as sketched in Figure~\ref{generators}.
One of the boundaries of each ${\mathfrak A}$-annulus represents $\rho$ (resp. $\rho^{-1}$) and becomes part of the braid $b$. The other boundary, which we denote by $\beta_j$ (resp. $-\beta_j$), is in $\beta \times (\frac{j-1}{m},\ \frac{j}{m})$.

We call the resulting surface $F_b$.

By \cite[Proposition 4.6.11]{Ge}, we may assume that the
characteristic foliation of our surface is of Morse-Smale type.
Each disk $\de_i$ has a positive elliptic point.
A positive (negative) band between the $\de$-disks contributes one positive (negative) hyperbolic point.
The foliation on the disk $\de_n$ together with an attached ${\mathfrak A}$-annulus has a positive (resp. negative) hyperbolic singularity as sketched in Figure~\ref{foliation}-(1) (resp. (2)) if the corresponding braid word is $\rho$ (resp. $\rho^{-1}).$
\begin{figure}[htpb!]
\begin{center}
\begin{picture}(360, 130)
\put(0,0){\includegraphics{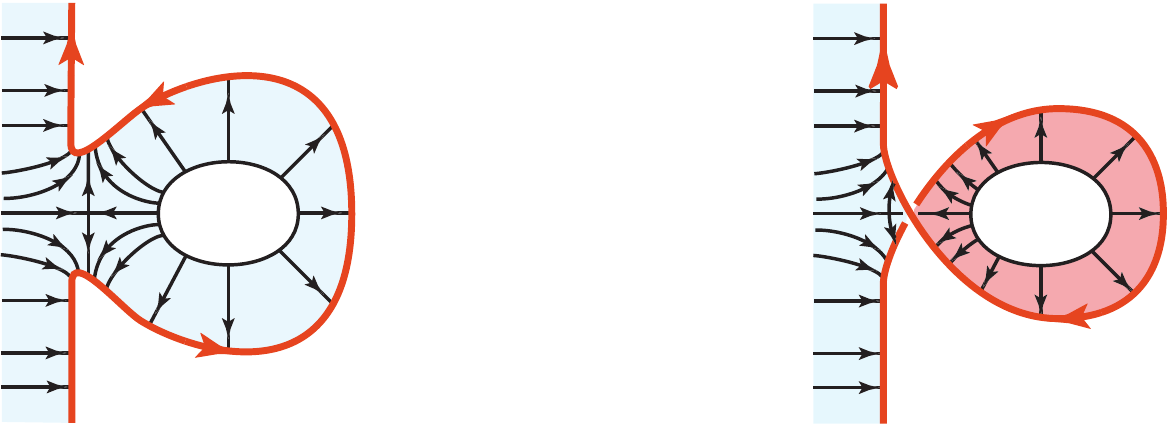}}
\put(-20, 120){(1)} \put(30,110){\scriptsize$+$ hyperbolic}
\put(200, 120){(2)} \put(270, 100){\scriptsize$-$hyperbolic}
\put(-10, 15){$\de_n$} \put(220, 15){$\de_n$}
\put(50, 10){$\rho$}
\put(290, 18){$\rho^{-1}$}
\end{picture}
\caption{Characteristic foliations of ${\mathfrak A}$-annulus for (1) $\rho$ and (2) $\rho^{-1}$.}\label{foliation}
\end{center}
\end{figure}


\subsection{Construction of the surface ${\tilde F}_b$ }\label{section-tilde F_b}

In Section~\ref{section F_b}, we have constructed an embedded oriented surface $F_b$ whose boundary consists of the braid $b$ and copies of $\pm \beta$'s.
Let $a_\s \in \Z$ (resp. $a_\rho\in \Z$) be the exponent sum of $\s_1, \cdots, \s_{n-1}$'s (resp. $\rho$'s) in the braid word for $b$.
Let $r$ be the number of $\rho^{\pm}$'s appearing in the braid word for $b$ of length $m$ (i.e., $0\leq r \leq m$).
Then there exist $1\leq j_1 < \cdots < j_r \leq m$ and $\epsilon_i = \pm 1$ with $\epsilon_1 + \epsilon_2 + \cdots + \epsilon_r = a_\rho$ such that
$$\partial F_b =  b \ \cup \ \epsilon_1\beta_{j_1}  \ \cup \
\epsilon_2\beta_{j_2}\ \cup \cdots \cup \ \epsilon_r\beta_{j_r}.$$

\begin{proposition}\label{a}
By attaching vertical annuli to pairs of $\be$ and $-\be$ circles as described in Figure~\ref{canceling_rho}, we can construct an embedded oriented surface, which we call $\tilde F_b$, whose boundary consists of 
\begin{equation}\label{equation boundary}
\partial \tilde{F}_b = \left\{
\begin{array}{ll}
\mbox{$b$ and $a_{\rho}$ copies of $\beta$}, & \mbox{if $k> 0$, }\\
b, & \mbox{if $k=0$, } \\
\mbox{$b$ and $-a_\rho$ copies of $-\beta$}, & \mbox{if $k<0$.}
\end{array}
\right.
\end{equation}
\end{proposition}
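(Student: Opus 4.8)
\textbf{Proof proposal for Proposition~\ref{a}.}

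The plan is to cancel the $\pm\beta$ boundary circles of $F_b$ in pairs, using vertical annuli, until only the homologically forced copies of $\beta$ remain. Recall from Section~\ref{section-tilde F_b} that the boundary of $F_b$ consists of the braid $b$ together with the circles $\epsilon_1\beta_{j_1}\cup\cdots\cup\epsilon_r\beta_{j_r}$, where each $\beta_{j_i}$ is a copy of $\beta$ (or $-\beta$) living in the slab of pages $\beta\times(\frac{j_i-1}{m},\frac{j_i}{m})$, and $\epsilon_1+\cdots+\epsilon_r=a_\rho$. The key geometric observation is that any two of these copies are parallel: they are isotopic circles sitting in different page-slabs of the mapping torus, so two of them bounding opposite orientations can be joined by an embedded \emph{vertical annulus}, one swept out by pushing $\beta$ through the intermediate pages, as in Figure~\ref{canceling_rho}. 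Attaching such an annulus removes a $+\beta$ and a $-\beta$ simultaneously from the boundary while keeping the surface embedded and oriented (the annulus inherits a consistent orientation precisely because the two boundary circles carry opposite signs).

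First I would make this cancellation combinatorial. Among the $r$ signed circles there are some number $p$ of $+\beta$'s and $q$ of $-\beta$'s, so $p-q=a_\rho$ and $p+q=r$. I would pair up $+\beta$ circles with $-\beta$ circles and attach $\min(p,q)$ vertical annuli, one per pair, each cancelling a $\pm$ pair. A subtlety to address: two circles to be joined may be separated by other $\beta_{j}$ circles in intermediate pages, so the vertical annuli must be routed to avoid intersecting one another and the earlier annuli. I would handle this by ordering the cancellations and nesting the annuli, or equivalently by isotoping the circles to be joined into adjacent page-slabs before attaching; because all copies of $\beta$ are parallel and disjoint from the braid $b$ (which sits between $\gamma$ and $\alpha$, while $\beta$ sits between $\alpha$ and $\gamma$ on the other side), there is enough room to keep everything embedded and disjoint from $b$.

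After the cancellations, the surviving boundary circles all carry the same sign, namely $\mathrm{sign}(a_\rho)$, and there are exactly $|a_\rho|$ of them. By Proposition~\ref{assumption for a_rho}, we may assume $a_\rho\geq 0$ when $k>0$ and $a_\rho\leq 0$ when $k<0$, which matches the three cases in~(\ref{equation boundary}): for $k>0$ the leftover circles are $a_\rho$ copies of $+\beta$, for $k<0$ they are $-a_\rho=\abs{a_\rho}$ copies of $-\beta$, and for $k=0$ Corollary~\ref{def of s} gives $a_\rho=0$, so $p=q$ and every circle is cancelled, leaving $\partial\tilde F_b=b$. The resulting surface $\tilde F_b$ is the union of $F_b$ with the attached annuli; it is embedded and oriented by construction.

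The main obstacle I expect is the embeddedness and disjointness of the cancelling annuli when many $\pm\beta$ circles are interleaved through the pages in an arbitrary order dictated by the braid word. Verifying that the annuli can always be chosen mutually disjoint and disjoint from $F_b$ (other than along their attaching circles) is the crux; I anticipate this is exactly what Figure~\ref{canceling_rho} is meant to illustrate, and the argument reduces to the fact that parallel curves in distinct pages of a mapping torus can be connected by disjoint vertical annuli by nesting them according to the cyclic order of the page-parameter $\theta$.
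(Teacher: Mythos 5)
Your proposal is correct and follows essentially the same route as the paper: the paper likewise cancels opposite-sign $\pm\beta$ circles by attaching vertical annuli iteratively (always joining the first adjacent pair $\epsilon_i\beta_{j_i}$, $\epsilon_{i+1}\beta_{j_{i+1}}$ with $\epsilon_i\neq\epsilon_{i+1}$, nesting later annuli inside earlier ones exactly as you suggest), and then invokes Proposition~\ref{assumption for a_rho} and Corollary~\ref{def of s} to identify the surviving boundary with the three cases of~(\ref{equation boundary}). Your greedy ordering-plus-nesting resolution of the disjointness issue is precisely the paper's mechanism (Figure~\ref{nesting}), so there is no gap to report.
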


\begin{figure}[htpb!]
\begin{center}
\begin{picture}(350, 74)
\put(0,0){\includegraphics{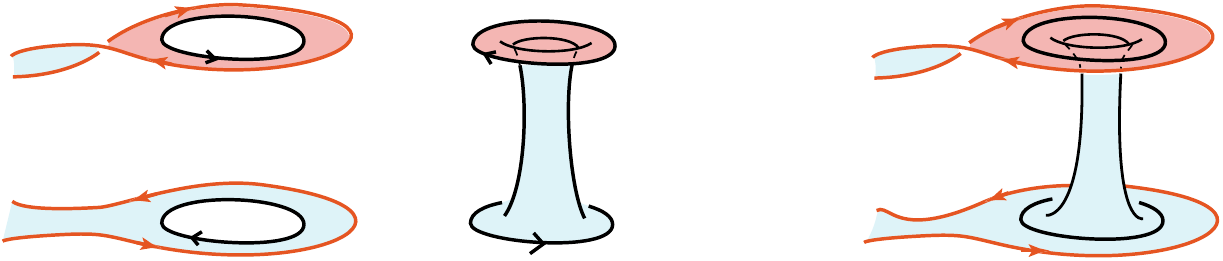}}
\put(-10, 70){\small (1)}
\put(240, 70){\small (2)}
\put(115, 33){$\cup$}
\put(200, 33){$=$}
\put(80, 34){$\beta_{j_i}$}
\put(30, 40){$-\beta_{j_{i+1}}$}
\put(45, 50){\vector(1,1){10}}
\put(80, 30){\vector(-1, -1){10}}
\end{picture}
\caption{Attaching a vertical annulus to ${\mathfrak A}$-annuli.}\label{canceling_rho}
\end{center}
\end{figure}

\begin{proof}
Suppose that $\partial F_b =  b \ \cup \
\epsilon_1\beta_{j_1}  \ \cup \ \epsilon_2\beta_{j_2}\
\cup \cdots \cup \ \epsilon_r\beta_{j_r}$ and $\epsilon_1 + \epsilon_2 + \cdots + \epsilon_r = a_\rho.$

If $\epsilon_1 = \epsilon_2 = ... = \epsilon_r$ ($\rho$ and $\rho^{-1}$ do not coexist in the braid word for $b$), then take $\tilde{F_b}= F_b$.

Else, let $1\le i \le r-1$ be the smallest index for which $\epsilon_i \ne \epsilon_{i+1}$. We attach a ``vertical" annulus to $(\epsilon_i\beta_{\j_i} ) \cup (\epsilon_{i+1}\beta_{j_{i+1}})$ as sketched in Figure~\ref{canceling_rho}. 
The boundary of the newly obtained surface (call this surface $F_{b,1}$) contains two less $\pm\beta$-curves:
$$\partial F_{b,1} =  b \ \cup \ 
\epsilon_1\beta_{j_1} \ \cup \cdots \cup 
\epsilon_{i-1}\beta_{j_{i-1}}\cup
\epsilon_{i+2}\beta_{j_{i+2}}\cup \dots  \cup
\epsilon_r\beta_{j_r}$$ 
but it preserves the sum:
$\epsilon_1 + \cdots + \epsilon_{i-1}+ \epsilon_{i+2} + \cdots +\epsilon_r = a_\rho.$
 
Renumber the the boundary components,
$$ \partial F_{b,1} =  b \ \cup \
\epsilon_1\beta_{j_1}  \ \cup \ \epsilon_2\beta_{j_2}\
\cup \cdots \cup \ \epsilon_{r-2}\beta_{j_{r-2}}$$
then repeat the procedure for $F_{b,1}$. 
If $i-1$ is the smallest index for which $\epsilon_{i-1} \neq \epsilon_{i}$, then attach the annulus by nesting it inside the one previously attached. See the right sketch in Figure~\ref{nesting}.
\begin{figure}[htpb!]
\begin{center}
\begin{picture}(330, 125)
\put(0,0){\includegraphics{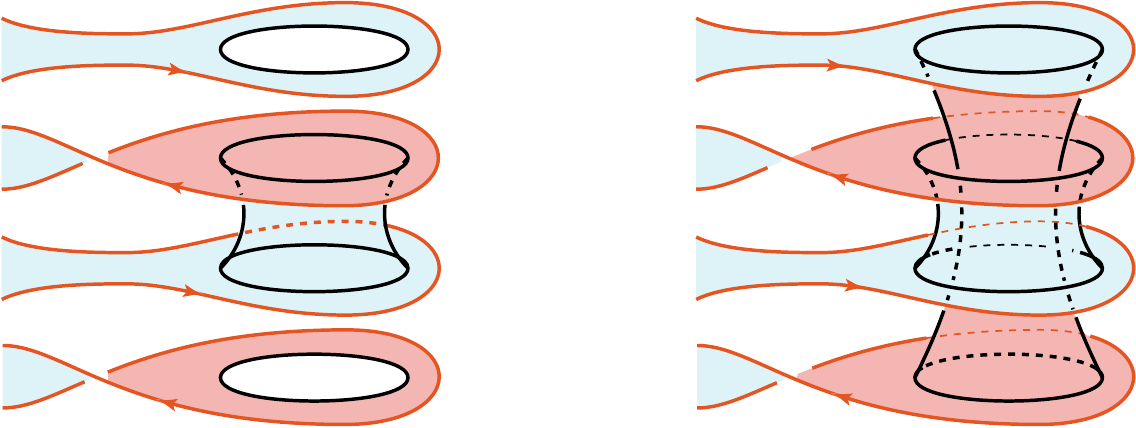}}
\put(130, 105){$i+2$}
\put(130, 70){$i+1$}
\put(130, 40){$i$}
\put(130, 10){$i-1$}
\end{picture}
\caption{ Nested vertical annuli}\label{nesting}
\end{center}
\end{figure}
After at most $[\frac{r}{2}]$ such attachments of annuli, all the $\epsilon_i$'s have the same sign, and we get the desired surface $\tilde{F_b}$.
By Proposition~\ref{assumption for a_rho} and Corollary~\ref{def of s} we have the equality (\ref{equation boundary}). 
\end{proof}


\subsection{Construction of the immersed surface ${\check F}_b$ }\label{section check F_b}

We have constructed a surface $\tilde{F_b}$ satisfying the boundary condition (\ref{equation boundary}).
In particular, when $k=0$ we have already obtained an embedded surface $\tilde{F}_b$ whose boundary is $b$.
Define $\Sigma_b := \tilde{F}_b$.

When $k\neq 0$, we construct an immersed surface $\check F_b$ from $\tilde F_b$, by attaching disks about the binding $\gamma'$.

Assume that $k> 0$. 
Proposition~\ref{assumption for a_rho} justifies assuming $a_\rho\geq 0$.
Let ${\mathfrak A}_1, \cdots, {\mathfrak A}_{a_\rho}\subset\tilde{F}_b$ be the ${\mathfrak A}$-annuli whose boundaries contribute to the $a_{\rho}$ copies of $\beta$-circles as in Proposition~\ref{a}.
Recall the number $s=\frac{a_\rho}{k}\geq 0$ defined in Corollary~\ref{def of s}.
Let $u_1, \cdots, u_s \subset A$ be arcs, see Figure~\ref{generators}, disjoint from the Dehn twist circle $\alpha$, such that one end of each $u_i$ sits on the binding $\gamma'$.
Let $\omega_1, \cdots, \omega_s$ be disks, called {\em $\omega$-disks}, obtained by swiping $u_1, \cdots, u_s$  in the open book $(A, D^k)$ so that the center of $\omega_i$ is pierced by $\gamma'$. 
For each $i= 1, \cdots, s$, connect $\omega_i$ smoothly with annuli ${\mathfrak A}_i, {\mathfrak A}_{s+i}, {\mathfrak A}_{2s+i}, \cdots, {\mathfrak A}_{(k-1)s + i}$ by $k$ copies of the twisted band as in Figure~\ref{foliation-1}-(1). 
When $k<0$, attach twisted bands as in Figure~\ref{foliation-1}-(2). 
We have obtained an immersed surface, which we denote by $\check F_b$, see Figure~\ref{clasp-copy}.
\begin{figure}[htpb!]
\begin{center}
\begin{picture}(360, 125)
\put(0,0){\includegraphics{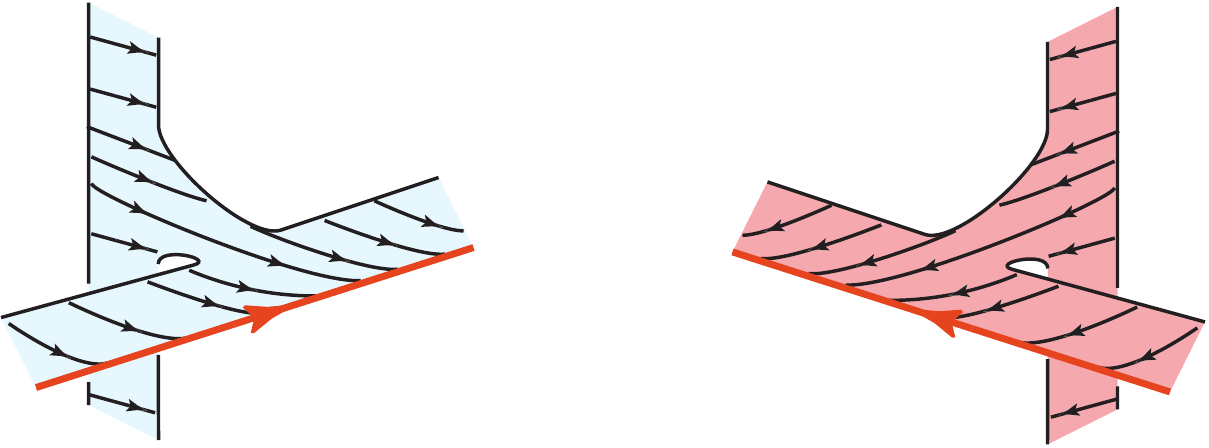}}
\put(-20, 108){\scriptsize(1)}
\put(-20, 95){\scriptsize$a_\rho > 0$}
\put(-20, 80){\scriptsize$k > 0$}
\put(60, 85){\scriptsize twisted}
\put(60, 75){\scriptsize band}
\put(36, 104){\scriptsize disk $\omega_i$}
\put(15, 31){\scriptsize annulus ${\mathfrak A}$}
\put(80, 30){\scriptsize$\rho$}
\put(130, 80){\scriptsize$\beta$}

\put(200, 108){\scriptsize(2)}
\put(200, 95){\scriptsize$a_\rho <0$}
\put(200, 80){\scriptsize$k<0$}
\put(240, 30){\scriptsize$\rho^{-1}$}
\put(350, 30){\scriptsize$-\beta$}

\end{picture}
\caption{An $\omega$-disk and an ${\mathfrak A}$-annulus joined by a twisted brand.}\label{foliation-1}
\end{center}
\end{figure}

\begin{figure}[htpb!]
\begin{center}
\begin{picture}(160, 270)
\put(0,0){\includegraphics{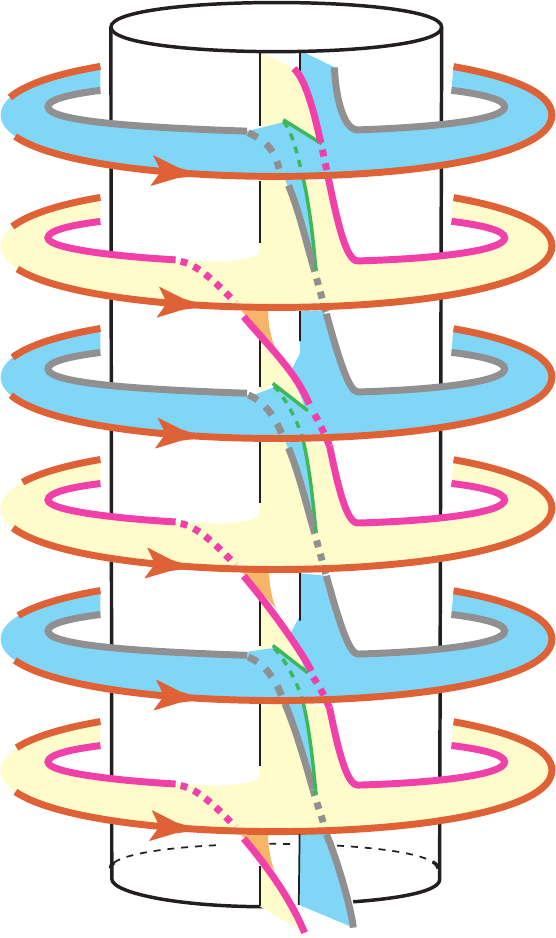}}
\put(88, 248){$\omega_1$}
\put(76, 240){$\omega_2$}
\put(50, 255){$\gamma'$}
\put(162, 230){$\mathfrak A_1$}
\put(162, 195){$\mathfrak A_2$}
\put(162, 160){$\mathfrak A_3$}
\put(162, 120){$\mathfrak A_4$}
\put(24, 134){\tiny self intersection $\rightarrow$}
\put(162, 85){$\mathfrak A_5$}
\put(162, 48){$\mathfrak A_6$}
\end{picture}
\caption{A part of immersed surface $\check F_b$ for $k=3, a_\rho=6, s=2.$ Two $\omega$-disks and six $\mathfrak A$-annuli joined by twisted bands. Self intersections are marked by thin green curves.}\label{clasp-copy}
\end{center}
\end{figure}


\begin{lemma}\label{no singularity on band}

Regardless of the sign of $k$, all the singularities for the characteristic foliation of the attached bands and the $\omega$-disks are positive elliptic.
\end{lemma}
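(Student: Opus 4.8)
The plan is to localize the whole computation to a standard neighborhood of the binding $\gamma'$, since by construction the $\omega$-disks, the relevant ends of the $\mathfrak A$-annuli, and the connecting bands of Figure~\ref{foliation-1} all lie there. First I would fix a normal form for $\xi_k$ near $\gamma'$: coordinates $(z,r,\theta)$ with $\gamma'=\{r=0\}$, the coordinate $z$ running along $\gamma'$, polar coordinates $(r,\theta)$ on the meridian disks, and contact form $\alpha = dz + r^2\,d\theta$. This is exactly the content of compatibility of $(A,D^k)$ with $\xi_k$: the binding is positively transverse to $\xi_k$ (so $\alpha|_{\gamma'}=dz>0$) and the pages limit onto the half-planes $\{\theta=\mathrm{const}\}$. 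Everything below is a direct reading of $TS\cap\xi_k$ in this model.

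Next I would dispatch the $\omega$-disks. By the construction in Section~\ref{section check F_b}, each $\omega_i$ is obtained by swiping an arc $u_i$ ending on $\gamma'$, hence it is a meridian disk $\{z=\mathrm{const}\}$ of $\gamma'$, entirely analogous to the disks $\de_i$ along $\gamma$. Restricting $\alpha$ to such a disk gives $r^2\,d\theta$, whose kernel is the radial line field $\langle\partial_r\rangle$ for $r\neq 0$ and which is singular only at the center $r=0$; this is a nodal, hence elliptic, singularity. Orienting $\omega_i$ so that $\gamma'$ pierces it positively (as was done for the $\de_i$, which were shown to carry a positive elliptic point), the coorientation of $\xi_k$ along $\gamma'$ agrees with the surface orientation at the center, so the point is \emph{positive} elliptic. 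Thus each $\omega$-disk contributes exactly one positive elliptic point and nothing else.

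The substantive part, and the step I expect to be the main obstacle, is the bands. Unlike the braid-crossing bands joining $\de_i$ and $\de_{i+1}$, each of which carries one hyperbolic singularity, the twisted bands here must be shown to carry only positive elliptic points; the apparent twisting is produced by the $k$ strands wrapping once around $\gamma'$ (and by the monodromy $D^k$), not by a genuine saddle of the surface against $\xi_k$. I would make this precise by positioning each band as a family of arcs lying in the pages $\{\theta=\mathrm{const}\}$ over the relevant $\theta$-interval, i.e. as a braided piece transverse to the pages, and then read off $TS\cap\xi_k$ from $\alpha=dz+r^2\,d\theta$: with this positioning the band meets $\xi_k$ tangentially only at isolated nodal points where the radial foliation of the adjacent meridian disks extends across the band, and the sign of each such tangency is again governed by the (positive) coorientation of $\xi_k$ along $\gamma'$, so every singularity that appears is positive elliptic. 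The crux is ruling out hyperbolic points: I would argue that a saddle on a band would force the band to be tangent to some page $\{\theta=\mathrm{const}\}$ from both sides, which is incompatible with the band being transverse to the pages in the chosen position, and that the Morse--Smale genericity guaranteed by \cite[Proposition 4.6.11]{Ge} can be arranged by a $C^\infty$-small perturbation supported away from $\gamma'$ that introduces no new singularities on the bands.

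Finally I would check that the sign bookkeeping is internally consistent, so that the conclusion is not contradicted by an index count. The difference between the number of positive elliptic points and the Euler characteristic of the disk-with-bands piece is absorbed by the boundary behavior of the characteristic foliation along $b$ and along the $\pm\beta$-curves, so there is no Poincar\'e--Hopf obstruction, and the analysis applies verbatim to both configurations of Figure~\ref{foliation-1}, giving the claim for $k>0$ and, by the mirror computation, for $k<0$.
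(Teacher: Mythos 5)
Your treatment of the $\omega$-disks is fine and matches the paper's (a meridian-type disk pierced by the binding, radial characteristic foliation, one elliptic point at the center), modulo one small point: the orientation of $\omega_i$ is not free for you to choose --- it is forced by the orientation of the ${\mathfrak A}$-annuli it is banded to, and the content of the positivity claim is that this \emph{induced} orientation makes $\gamma'$ pierce $\omega_i$ positively for both signs of $k$. But the substantive gap is in your treatment of the bands, and it is a genuine one. Your crux step --- ``a saddle on a band would force the band to be tangent to some page $\{\theta=\mathrm{const}\}$ from both sides, which is incompatible with the band being transverse to the pages'' --- conflates tangency of the surface with the \emph{pages} and tangency with the \emph{contact planes}. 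Singularities of the characteristic foliation occur where $T_pS=\xi_p$, and near the binding $\xi$ is far from the page distribution (in your own model $dz+r^2d\theta$, at $r=0$ the contact plane is $\ker dz$, transverse to the pages). Indeed the $\omega$-disk itself is swept out by arcs lying in pages --- ``a braided piece transverse to the pages'' in exactly your sense --- and it nevertheless has an elliptic tangency with $\xi$ at its center; this is a direct counterexample to the implication you rely on, so your exclusion of hyperbolic points does not go through. Your positioning of the band also conflicts with the construction: in the paper the band is a small neighborhood of an arc lying in a \emph{single} page running from near $\gamma'$ across $\alpha$ to the circle $\beta$ (Figure~\ref{foliation-1-1}), not a braided family of page arcs, and repositioning it would require an isotopy argument you do not supply.

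The paper's actual argument is a slope comparison along the core of the band, and it proves something stronger than what you assert: the bands carry \emph{no} singularities at all. Along the core from $p_1$ (near $\gamma'$) to $p_3$ (on $\beta$), the contact planes rotate from slope $90-\epsilon_1$ down past horizontal to slope $-\epsilon_3$, while the tangent planes of the band interpolate from vertical at the $\omega$-disk to a small positive slope $\epsilon$ at the braid (for $k>0$), so the two plane fields never coincide; for $k<0$ one additionally uses that $b$ is transverse, forcing $\epsilon>\epsilon_3$. This nonsingularity of the bands is not an optional strengthening: your version, in which the bands may carry extra positive elliptic points ``where the radial foliation extends across the band,'' would break the singularity count $e^+=n+s$ recorded in Remark~\ref{remark sing points} and used in the proof of Theorem~\ref{sl-thm}, so the self-linking formula itself would come out wrong. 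Your closing Poincar\'e--Hopf paragraph does not repair this, since it only checks consistency of a count you have not established.
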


\begin{remark}\label{remark sing points}
The surface $\check F_b$ has $s$ additional positive elliptic points compared to $\tilde F_b$. 
\end{remark}

\begin{proof}

By definition of $\omega$-disk, its characteristic foliation has a single singularity at its center and it is of elliptic type (Figure~\ref{foliation-1}). 
The orientation of $\omega$-disk is induced from that of the $\mathfrak A$-annuli so that the sign of the elliptic point is positive regardless of the sign of $k$. 

In the following, we show that there are no hyperbolic points on the twisted band. 
See Figure~\ref{foliation-1-1}.
Parameterize the twisted band as $[0,1] \times [-1, 1]$. 
Attach the side $\{0\}\times [-1,1]$ of the band to the $\omega$-disk and $\{1\}\times[-1,1]$ to the $\beta$-circle so that the (dashed) line segment $[0,1]\times\{\frac{1}{2}\}$ sit on one page of the open book.
We make the resulting surface smooth near the two attaching sides of the band. 
\begin{figure}[htpb!]
\begin{center}
\begin{picture}(374, 190)
\put(-7,0){\includegraphics{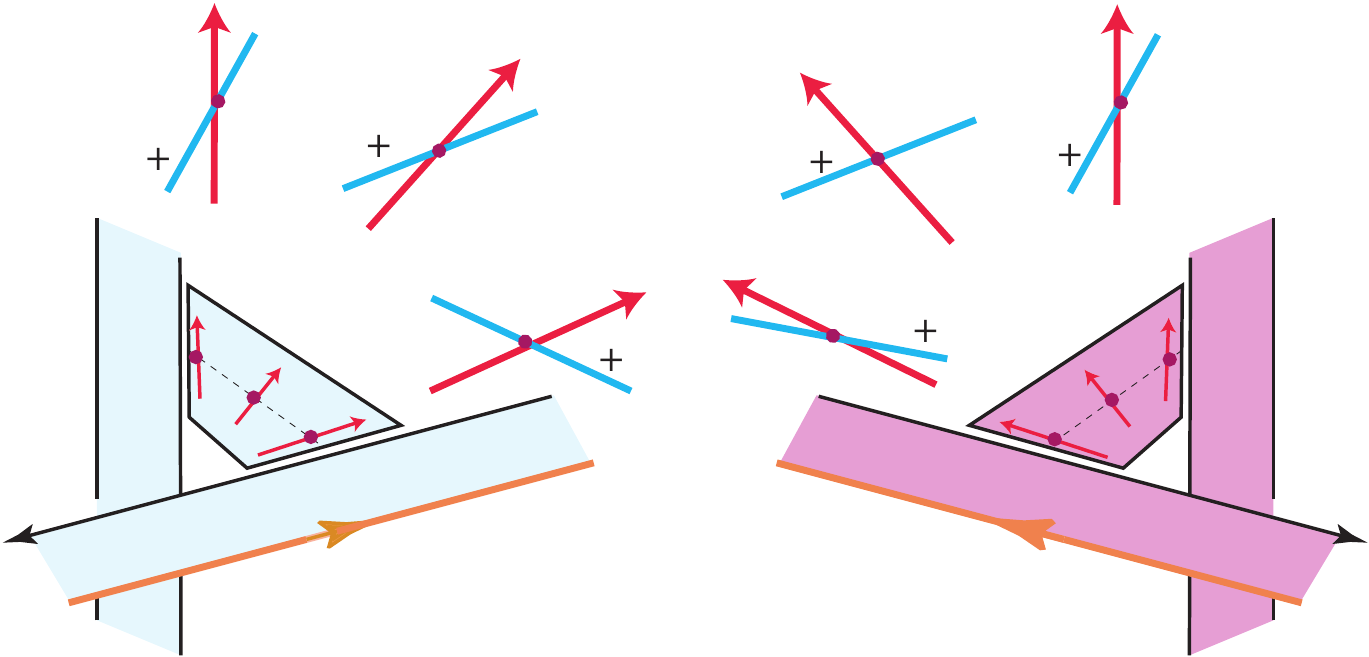}}

\put(140, 80){\scriptsize$p_3$}
\put(80, 70){\scriptsize$p_3$}
\put(170, 110){\scriptsize$v_3$}

\put(60, 85){\scriptsize$p_2$}
\put(115, 155){\scriptsize$p_2$}
\put(130, 170){\scriptsize$v_2$}

\put(35, 90){\scriptsize$p_1$}
\put(45, 160){\scriptsize$p_1$}
\put(43, 180){\scriptsize$v_1$}

\put(40, 30){\scriptsize$\mathfrak A$}
\put(30, 50){\scriptsize$\omega$}
\put(115, 30){\scriptsize$\rho$}
\put(120, 0){\scriptsize$k>0$}
\put(-5, 40){\scriptsize$\beta$}

\put(265, 30){\scriptsize$\rho^{-1}$}
\put(245, 0){\scriptsize$k<0$}
\put(338, 30){\scriptsize$\mathfrak A$}
\put(345, 50){\scriptsize$\omega$}
\put(380, 40){\scriptsize$-\beta$}

\put(240, 95){\scriptsize$p_3$}
\put(255, 139){\scriptsize$p_2$}
\put(320, 155){\scriptsize$p_1$}

\put(305, 175){\scriptsize$v_1$}
\put(235, 160){\scriptsize$v_2$}
\put(215, 110){\scriptsize$v_3$}

\put(290, 70){\scriptsize$p_3$}
\put(310, 85){\scriptsize$p_2$}
\put(320, 90){\scriptsize$p_1$}

\end{picture}
\caption{Proof of Lemma~\ref{no singularity on band}}\label{foliation-1-1}
\end{center}
\end{figure}
Take points on the twisted band $p_1=(0, \frac{1}{2})$, $p_2=(\frac{1}{2}, \frac{1}{2})$, and $p_3=(1, \frac{1}{2})$.
Let $v_i$ be a tangent vector (red arrow) of the band at $p_i$ that is perpendicular to the dashed line $[0,1]\times\{\frac{1}{2}\}$.

When $k>0$, with respect to the page of the open book, $v_1$ is vertical, $v_2$ is slanted $45^\circ$, and $v_3$ is slanted $\epsilon$-degree ($0< \epsilon < 45)$ because the braid $b$ transversely intersects all the pages of the open book positively. 
Next we look at contact planes $\xi_{p_i}$ (light blue line segment) at $p_i$.
In Figure~\ref{foliation-1-1}, the positive side of a contact plane is marked ``$+$".
At each point of the bindings $\gamma, \gamma'$, we may assume that the contact plane is positively perpendicular to the binding. Between the bindings, the contact planes rotate $180^\circ$ counter clockwise along the radial lines.  
Since $p_1$ is close to the binding $\gamma'$, for some $\epsilon_1>0$, $\xi_{p_1}$ is slanted $(90-\epsilon_1)$-degree with respect to the page of the open book. 
While, $\xi_{p_3}$ is slanted $(-\epsilon_3)$-degree for some $\epsilon_3>0$ since $p_3$ is on the circle $\beta$ which is between $\alpha$ and $\gamma$ (Figure~\ref{generators}).
At any point between $p_1$ and $p_3$ on the dashed line, the tangent plane is slanted more than the contact plane. It means that they never coincide. Since the band is a small neighborhood of the dashed line, contact planes are never tangent to the band, hence there are no singularities in the characteristic foliation on the twisted band.

When $k<0$, at $p_3$, the tangent vector $v_3$ is slanted $(180-\epsilon)$-degree and the contact plane $\xi_{p_3}$ is slanted $(-\epsilon_3)$-degree. Braid $b$ is a transverse link so it intersects contact planes positively. Considering that $p_3$ is close to the braid (orange arc), $v_3$ intersects $\xi_{p_3}$ positively, i.e., $\epsilon > \epsilon_3$. 
Therefore, the tangent planes and the contact planes never coincide along the dashed line from $p_1$ to $p_3$, hence there are no singularities in the characteristic foliation on the twisted band.
\end{proof}

\subsection{Construction of the immersed surface ${\hat F}_b$ }\label{section hat F_b}

Let $\tau$ be a closed braid in $(A, D^k)$ of braid index $=1$. 
The immersed surface $\check F_b$ constructed in Section~\ref{section check F_b} has boundary $[b] + s[\tau+k\beta]$ in $H_1(M_{(A, D^k)}, \Z)$.
Each closed curve representing $-[\tau+k\beta]$ bounds a disk about the binding $\gamma$. We call it a  {\em $\mathcal D$-disk}, see Figure~\ref{D-disk}. 
\begin{figure}[htpb!]
\begin{center}
\begin{picture}(252, 260)
\put(0,0){\includegraphics{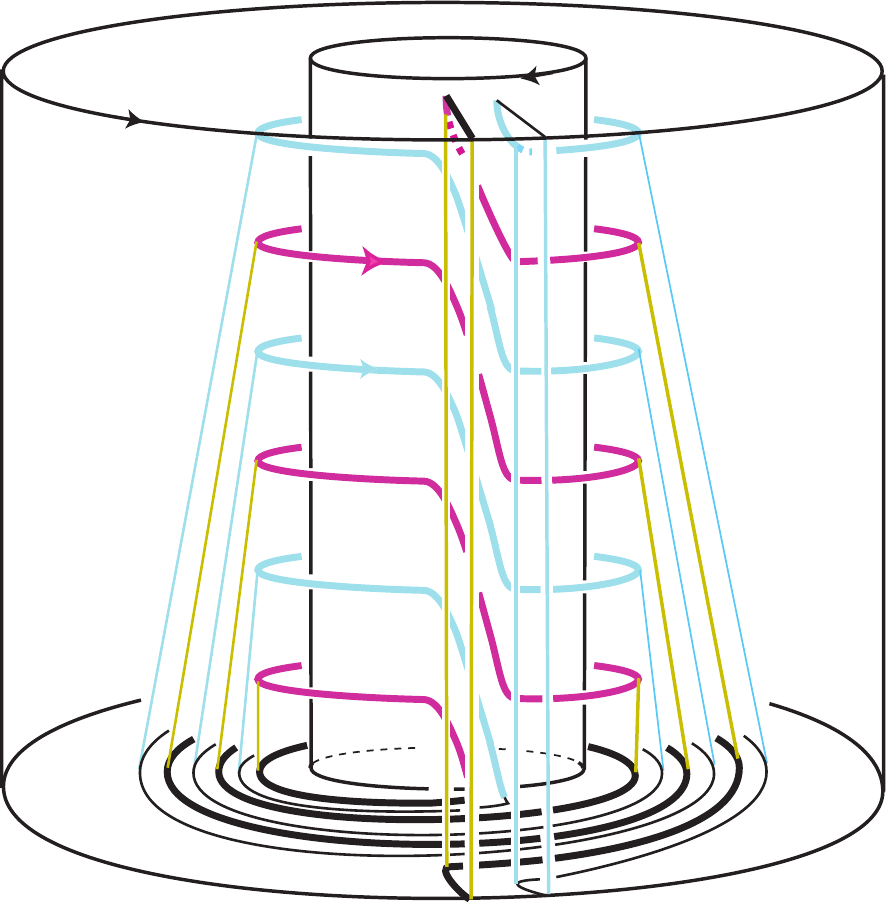}}
\put(36, 216){$\gamma$}
\put(108, 240){$\gamma'$}
\end{picture}
\caption{$\mathcal D$-disks}\label{D-disk}
\end{center}
\end{figure}
There, the spirals in the bottom annulus page are identified, via the Dehn twist $D^k$, with the straight line segments in the top annulus page.
There are $s$-copies of $\mathcal D$-disk and they are disjoint from each other.  
Since $\mathcal D$-disks are nearly `vertical' as in Figure~\ref{D-disk}, the tangent planes and the contact planes, which rotate $180^\circ$ counter clockwise along the radial lines from $\gamma$ to $\gamma'$,  intersect transversely. This means that the characteristic foliation of each $\mathcal D$-disk has a single singularity, which occurs at the intersection point with $\gamma$ and whose type is elliptic. 
The orientations of the $\mathcal D$-disks are compatible with those of the boundaries so that the $\mathcal D$-disks and $\gamma$ intersect negatively. 
Therefore the signs of the elliptic points are negative.

\begin{definition}
We construct an immersed surface $\hat F_b$ by glueing the $\mathcal D$-disks and $\check F_b$ along the $s$ copies of the $\tau + k\beta$ curve. 
\end{definition}

\begin{remark}\label{sign of elliptic points}
This $\hat{F}_b$ has $s$ additional negative elliptic singularities given by the $\mathcal D$-disks compared to the surface $\check F_b$.
\end{remark}




\subsection{Resolution of singularities}\label{section resolution}

We start this section by defining three types of intersection of surfaces; branch, clasp, and ribbon, then study resolution of self-intersections.

\begin{definition}\label{def of branch}
Let $\Sigma$ be an immersed oriented surfaces with $\partial \Sigma = K$ given by the immersion $i:\tilde{\Sigma}\to \Sigma$. Let $l\subset \Sigma$ be a simple arc where $\Sigma$ intersects itself, and denote by 
$p$ and  $q$ the endpoints of $l$.

$\bullet$
If $p$ is  sitting on $K$, and $q$ is a branch point of a neighborhood Riemann surface, see Figure~\ref{branch}-(1),  then we call $l$ a {\em branch} intersection.
\begin{figure}[htpb!]
\begin{center}
\begin{picture}(340, 130)
\put(0,0){\includegraphics{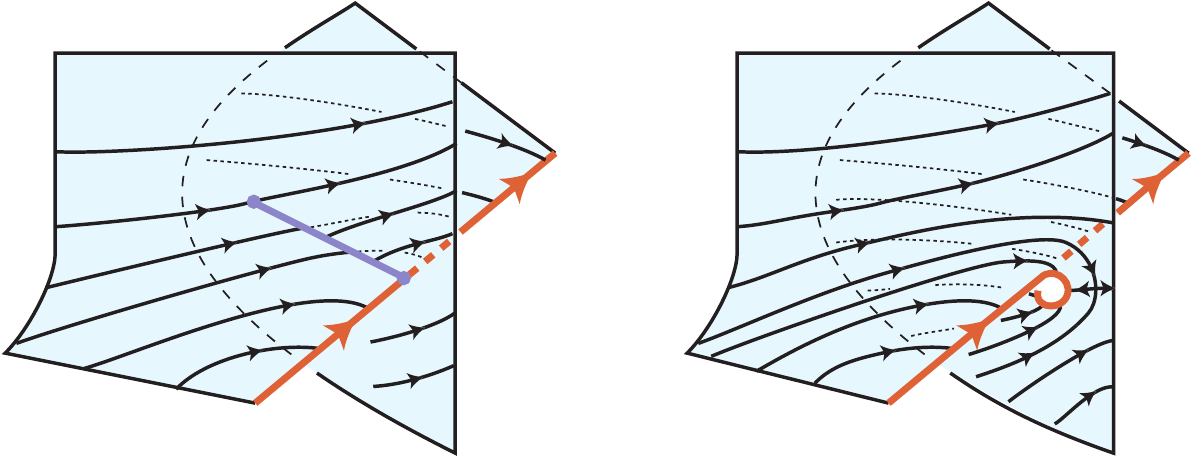}}
\put(90, 69){$l$}
\put(0, 115){\small $(1)$}
\put(195, 115){\small $(2)$}
\put(140, 63){$K$}
\put(120, 45){$p$}
\put(66, 80){$q$}
\end{picture}
\caption{(1) A negative branch intersection $l$, and  (2) its resolution.   }\label{branch}
\end{center}
\end{figure}

Next assume that the preimage of $l$, $i^{-1}(l)\subset \tilde{\Sigma}$, consists of two arcs, say $\tilde l_1, \tilde l_2$. Denote the end points of $\tilde l_i$ by $\tilde{p_i}$ and $\tilde{q_i}$ for $i=1,2$.

$\bullet$
If $\tilde p_1, \tilde q_2 \in \partial \Sigma$ and $\tilde p_2, \tilde q_1 \in{\rm Int}(\tilde \Sigma)$ then we call the intersection a  {\em clasp} intersection.  
A local picture of $l$ is the left sketch of Figure~\ref{clasp-ribbon}.

$\bullet$
If $\tilde p_1, \tilde q_1 \in \partial \Sigma$ and $\tilde p_2, \tilde q_2 \in{\rm Int}(\tilde \Sigma)$ then we call the intersection a {\em ribbon}  intersection. See the right sketch of Figure~\ref{clasp-ribbon}. 
\begin{figure}[htpb!]
\begin{center}
\begin{picture}(332, 144)
\put(0,0){\includegraphics{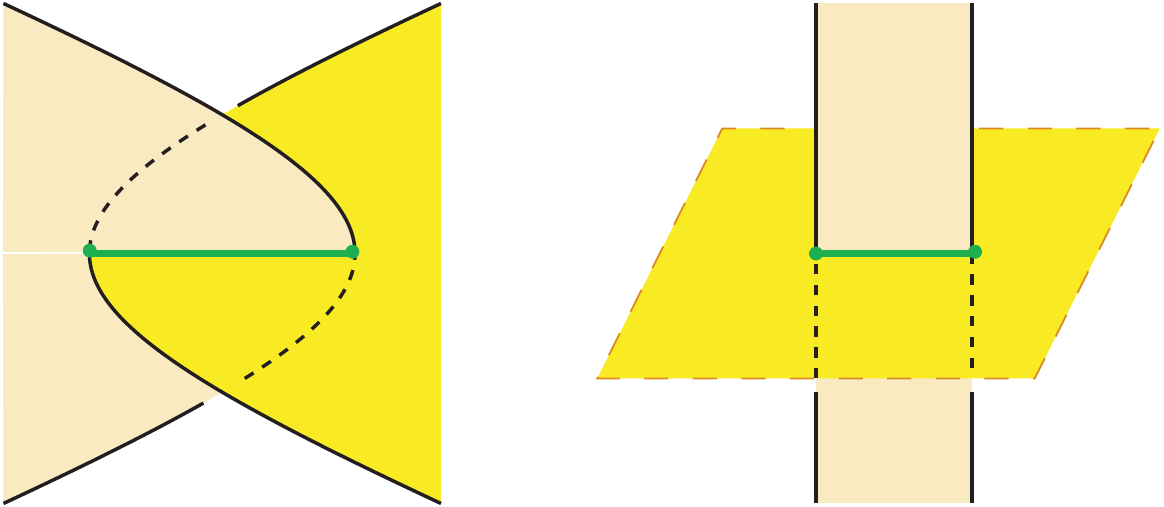}}
\put(80, 60){$l$}
\put(10, 72){$q$}
\put(115, 72){$p$}
\put(10, 20){$\Sigma$}
\put(110, 20){$\Sigma$}

\put(220, 72){$p$}
\put(290, 72){$q$}
\put(250, 60){$l$}
\put(250, 20){$\Sigma$}
\put(210, 45){$\Sigma$}

\end{picture}
\caption{(Left) A clasp intersection. (Right) A ribbon intersection.   }\label{clasp-ribbon}
\end{center}
\end{figure}
\end{definition}

\begin{example}\label{example of singularities}
See Figure~\ref{clasp-branch}.
The immersed surface, $\hat{F}_b$, has: 

\noindent $\bullet$
$|a_\rho|$ branches formed by $\mathfrak A$-annuli and $\mathcal D$-disks. %
\begin{figure}[htpb!]
\begin{center}
\begin{picture}(380, 446)
\put(0,0){\includegraphics{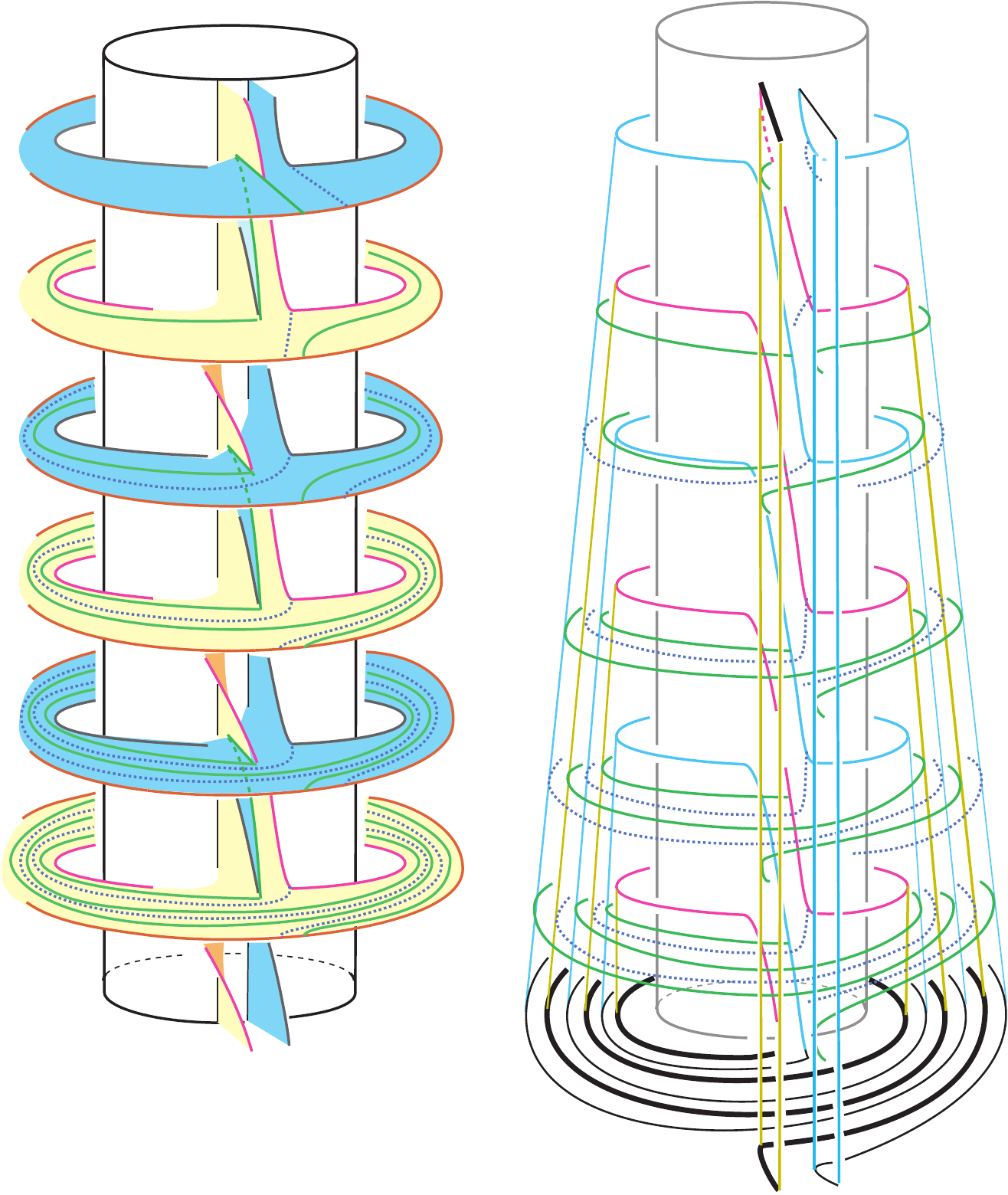}}
\put(110, 380){\tiny{branch}}
\put(70, 330){\tiny{clasp}}
\end{picture}
\caption{Clasp (green) and branch (dashed purple) intersections on $\hat F_b$. }\label{clasp-branch}
\end{center}
\end{figure}

\noindent $\bullet$
$|k|\binom{s}{2}=\frac{1}{2}|a_{\rho}|(s-1)$ clasp intersections when $s>1$. Recall that $\hat{F}_b$  is obtained by attaching $s$ copies of $\mathcal D$-disk about the binding $\gamma$. Each pair among these $s$ disks interacts as in  Figure~\ref{clasp-branch} giving rise to $|k|$ clasp intersections. 
When $s=1, 0$, there are no clasps.

\noindent $\bullet$
several ribbon intersections of $\mathcal D$-disks and the (nested) vertical  annuli of Figure~\ref{canceling_rho}.

\end{example}

In Section~\ref{section Sigma_b}, we resolve these self-intersections to obtain an embedded surface $\Sigma_b$.

In the following, we assume that $K$ is a transverse knot in a contact manifold $(M, \xi)$ and $\Sigma$ an immersed oriented surface with $\partial \Sigma = K$. Also,
we assume that (i) the self-intersection set of $\Sigma$ consists of ribbon, clasp, or branch intersections; (ii) the characteristic foliation ${\mathcal F}_\Sigma$ is of Morse-Smale type.

Let $l \subset \Sigma$ be a self-intersection arc.
Near a point $x \in {\rm Int}(l)$, $\Sigma$ intersects itself transversely as in Figure~\ref{resolution}-(1).
Let $F_i \subset \Sigma$ ($i=1, 2, 3, 4$) be surfaces meeting at $l$. The orientation of $F_i$ is induced from that of $\Sigma$.
Resolve the singularity $l$ by cutting $\Sigma$ out along $l$ and re-gluing $F_1, F_2$ along
$l$ and $F_3, F_4$ along $l$ so that the orientations of the surfaces agree. See Figure~\ref{resolution}-(2). Call the new surface $\Sigma'$.
\begin{figure}[htpb!]
\begin{center}
\begin{picture}(280, 80)
\put(0,0){\includegraphics{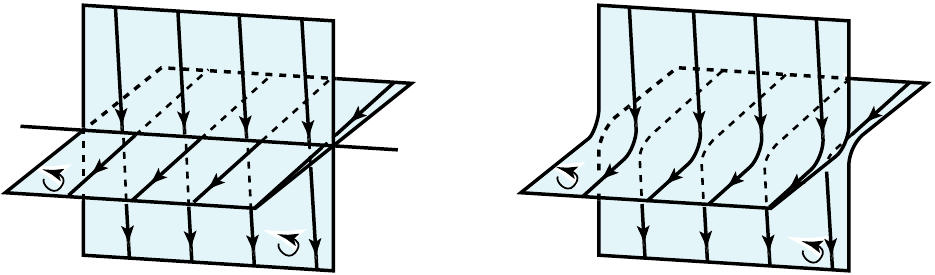}}
\put(-5, 40){$l$}
\put(8, 70){$F_1$}
\put(-13, 20){$F_2$}
\put(100, 5){$F_3$}
\put(110, 60){$F_4$}
\put(-20, 70){(1)}
\put(155, 70){(2)}
\end{picture}
\caption{(1) Immersed surface $\Sigma$. (2) New surface $\Sigma'$ after resolution of singularity $l$.  }\label{resolution}
\end{center}
\end{figure}

We orient the leaves of the characteristic foliation following \cite[page 80]{OS}: For $p\in\Sigma$ a nonsingular point of a leaf $L$ of the foliation, let $\vec{n}\in T_p\Sigma$ be a positive normal vector to $\xi_p$. 
We choose a vector $\vec{v} \in T_pL$ so that $(\vec{v}, \vec{n})$ is a positive basis for $T_p\Sigma$.
This vector field $\vec{v}$ determines the orientation of the characteristic foliation.

We observe that if both ${\mathcal F}_{F_1}$ and ${\mathcal F}_{F_2}$ transversely intersect the line $l$ (Figure~\ref{resolution}-(1)), then the orientations of ${\mathcal F}_{F_1}$ and ${\mathcal F}_{F_2}$ agree at $l$. 
Hence, after the cut and glue operation, the new characteristic foliation $\mathcal F_{\Sigma'}$ is obtained by smoothly connecting the old ${\mathcal F}_{F_1}$ and ${\mathcal F}_{F_2}$, and also ${\mathcal F}_{F_3}$ and ${\mathcal F}_{F_4}$.
See Figure~\ref{resolution}-(2).

Near the endpoints of $l$, this resolution creates new hyperbolic
points and ${\mathcal F}_{\Sigma'}$ can be made into Morse-Smale type. See Figure~\ref{branch}-(2) and Figure~\ref{resolution-2}-(2).
\begin{figure}[htpb!]
\begin{center}
\begin{picture}(370, 144)
\put(0,0){\includegraphics{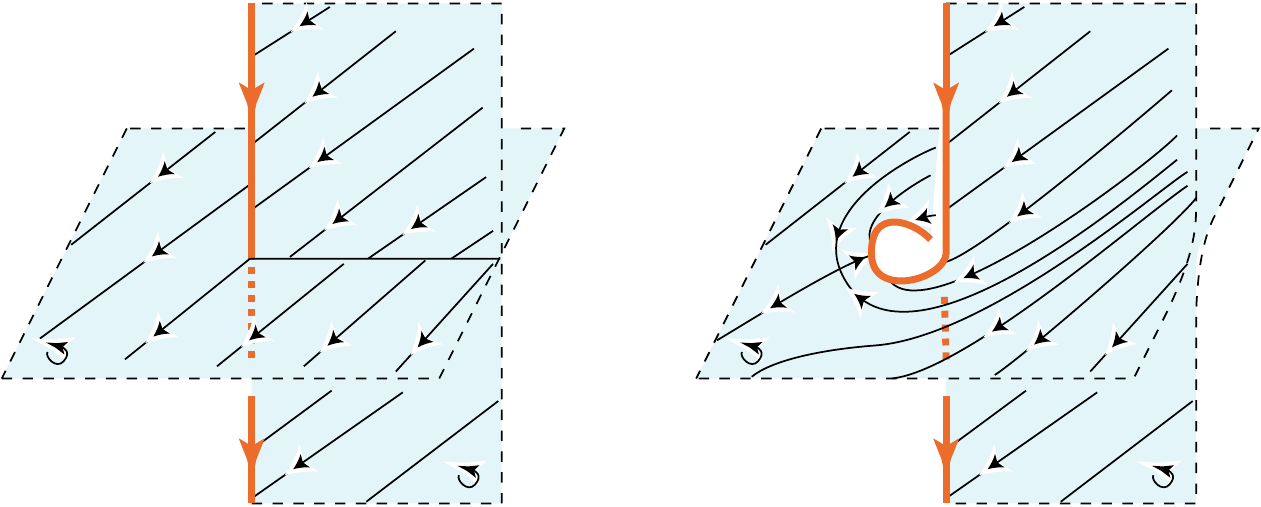}}
\put(0, 135){(1)}
\put(210, 135){(2)}
\put(150, 65){$l$}
\put(62, 70){$p$}
\put(60, 130){$K$}
\put(233, 67){$h$}
\put(255, 135){$K$}
\end{picture}
\caption{(1) A negative intersection $p$. (2) Creation of a negative hyperbolic singularity $h$ by resolving singular arc $l$.}\label{resolution-2}
\end{center}
\end{figure}
The signs of the new hyperbolic points are determined in the following way:

\begin{proposition}\label{prop-sign}
Suppose that $p \in \partial l \cap K$ and both ${\mathcal F}_{F_1}$ and ${\mathcal F}_{F_2}$ are transversely intersecting with $l$.
If $p$ is a positive (negative) transverse intersection of $K$ and $\Sigma$, then the new hyperbolic point has positive (negative) sign.
\end{proposition}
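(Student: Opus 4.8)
The plan is to treat this as a purely local orientation computation in a neighborhood of $p$, comparing the two sign conventions head-on. Recall first that the sign of a singular point $h$ of a characteristic foliation is, by definition, positive exactly when the orientation of the surface agrees with that of $\xi_h$ (at $h$ one has $T_h\Sigma'=\xi_h$); this is the convention underlying the orientation rule for ${\mathcal F}_\Sigma$ quoted from \cite{OS}. Since only orientations enter, I am free to co-orient $\xi$ by \emph{any} vector on which the contact form is positive; because $K$ is a transverse knot, I would take this co-orienting vector to be the positive tangent $\vec t$ of $K$ at $p$ itself. Then the contact orientation is $\mathrm{or}(M)=\vec t\wedge\mathrm{or}(\xi)$, and transversality of the intersection $K\pitchfork\Sigma$ at $p$ says precisely that $\vec t\notin T_p F$, where $F$ is the local sheet that $K$ pierces.

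Next I would record the crux identity. Let $P$ denote the projection onto $\xi$ along $\vec t$, and write $\mathrm{or}(F)=(e_1,e_2)$ at $p$. Subtracting multiples of the first column gives $\det(\vec t,e_1,e_2)=\det(\vec t,Pe_1,Pe_2)$, whose sign relative to $\mathrm{or}(M)=\vec t\wedge\mathrm{or}(\xi)$ is exactly whether $(Pe_1,Pe_2)$ agrees with $\mathrm{or}(\xi)$. Thus the sign of the transverse intersection at $p$ equals the sign comparing the $P$-image of $\mathrm{or}(F)$ with $\mathrm{or}(\xi)$. The key observation is that this ``projected'' sign is locally constant on the set of all planes transverse to $\vec t$: for such a plane $V$ the restriction $P|_V$ is an isomorphism onto $\xi$, so $(Pe_1,Pe_2)$ is a basis of $\xi$ and its orientation relative to $\mathrm{or}(\xi)$ varies continuously, hence is constant.

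Then I would analyze the resolution. By the discussion preceding the proposition, ${\mathcal F}_{F_1}$ and ${\mathcal F}_{F_2}$ match in orientation along $l$, so the cut-and-reglue of Figure~\ref{resolution} (see Figure~\ref{resolution-2}) produces an oriented surface $\Sigma'$ carrying one new singular point $h$ near $p$ with $T_h\Sigma'=\xi_h$; since the gluing is orientation preserving, $\mathrm{or}(\Sigma')$ near $h$ is the continuous extension of $\mathrm{or}(F)$. Tracing a path along $\Sigma'$ from the $F$-region to $h$, the tangent plane $T_x\Sigma'$ stays transverse to $\vec t$ and rotates until it coincides with $\xi_h$. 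By the local constancy just noted, the projected sign is the same at both ends: at the $F$-end it is the intersection sign at $p$, while at $h$ the projection becomes the identity and it is the direct comparison of $\mathrm{or}(\Sigma')$ with $\mathrm{or}(\xi_h)$, i.e. the sign of $h$. Hence the two signs coincide, positive with positive and negative with negative; reversing every orientation yields the negative case verbatim.

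The main obstacle is the orientation bookkeeping across the two worlds — the intersection sign lives in $TM$ and compares $(\vec t,T_pF)$ with $\mathrm{or}(M)$, whereas the sign of $h$ lives on the surface and compares $T_h\Sigma'$ with $\xi_h$ — and the verification that $T_x\Sigma'$ genuinely stays transverse to $\vec t$ along the whole path to $h$. I would discharge both by writing an explicit Darboux chart at $p$ with $\xi=\ker(dz-y\,dx)$, $\vec t=\partial_z$, and $F$ a small disk transverse to the $z$-axis, carrying out the cut-and-glue in coordinates and reading off the Hessian of the local height function whose critical point is $h$; the Morse--Smale hypothesis guarantees this critical point is nondegenerate, hence a genuine hyperbolic singularity of the asserted sign.
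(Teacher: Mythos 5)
Your proposal is correct and takes essentially the same approach as the paper: both arguments reduce to a local coorientation comparison in a small chart at $p$ in which $K$ is straightened and the contact planes are nearly constant, so that the transverse intersection sign of $K$ with the pierced sheet determines whether $T_h\Sigma'$ agrees or disagrees with $\xi_h$. The obstacle you flag (transversality of $T_x\Sigma'$ to $\vec{t}$ along the path to $h$, i.e., that the orientation of $\Sigma'$ at $h$ continues that of the pierced sheet) and your proposed Darboux-chart discharge are exactly the substance of the paper's proof, which identifies $-K$ with the $z$-axis and the pierced sheet with the $xy$-plane, observes that transversality of $K$ forces the positive normal of $\xi$ to have negative $z$-component, assumes the contact planes almost parallel in the neighborhood, and concludes $T_h\Sigma = -\xi_h$.
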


\begin{proof}
Assume that $p$ is a negative intersection, as depicted in
Figure~\ref{resolution-2}-(1). We introduce an $(x,y,z)$-coordinate
system for a small neighborhood $N$ of $p$: Identify $p$ with
$(0,0,0)$, and identify $-K$ with the $z$-axis. Regard the surface
which $K$ penetrates as the $xy$-plane. Since $K$ is a transverse
knot, it transverses the contact $2$-planes positively. Thus at a
point $r \in K\cap N$ the positive normal vector $\vec{n}_r$ to the
contact plane $\xi_r$ has a negative $z$-component, i.e., $\vec{n}_r
\cdot (0,0,1) < 0$.
We may assume that contact planes are almost parallel to each other
in $N$. Therefore, at the new hyperbolic point $h\in N$ we have
$T_h\Sigma = -\xi_h$. This means that $h$ is a negative hyperbolic
point.
\end{proof}

Since the two end points of a ribbon (resp. clasp) singularity have the same  sign (resp. opposite signs), it follows that:
\begin{corollary}
\label{cor-sign}
(1) The resolution of a ribbon singularity creates one positive and one negative hyperbolic points. \\
(2) The resolution of a clasp singularity creates two hyperbolic points of the {\em same} sign. \\
(3) The resolution of a branch singularity creates one hyperbolic point. See Figure~\ref{branch}.
\end{corollary}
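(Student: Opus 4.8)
The plan is to obtain all three statements as immediate consequences of Proposition~\ref{prop-sign}, once we record, for each type of singularity in Definition~\ref{def of branch}, how many endpoints of the self-intersection arc $l$ lie on $K$ and how their transverse-intersection signs are related.

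First I would settle the branch case (3). By Definition~\ref{def of branch} a branch intersection has exactly one endpoint, $p$, on $K$, the other endpoint $q$ being an interior branch point of the immersion. Only $p$ meets the hypothesis of Proposition~\ref{prop-sign}, so the resolution produces a single new hyperbolic point there; near $q$ the resolution pictured in Figure~\ref{branch}-(2) merely smooths the branch and contributes no further singularity to the characteristic foliation. Hence exactly one hyperbolic point, which proves (3).

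For the ribbon and clasp cases I would argue in parallel, since for both the preimage $i^{-1}(l)$ is a pair of arcs and the boundary conditions of Definition~\ref{def of branch} place, at each of the two image endpoints $p$ and $q$, exactly one preimage on $\partial\tilde\Sigma$. Thus both $p$ and $q$ lie on $K$, both satisfy the hypothesis of Proposition~\ref{prop-sign}, and the resolution creates a hyperbolic point at each, whose sign equals the sign of the corresponding transverse intersection of $K$ with $\Sigma$. It then remains only to compare these two transverse-intersection signs. Here the two models diverge: in the ribbon case both endpoints record the free boundary of a single sheet passing through the interior of the other, so following $K$ along that one boundary arc it enters the pierced sheet at one endpoint and leaves it at the other, and the two transverse intersections carry opposite signs; in the clasp case the two endpoints record the boundaries of two different, interlocked sheets, and the local model forces the two transverse intersections to carry the same sign. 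Feeding these into Proposition~\ref{prop-sign} yields one positive and one negative hyperbolic point for a ribbon, and two hyperbolic points of the same sign for a clasp, which proves (1) and (2).

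The routine part is the counting in the branch case together with the verbatim appeal to Proposition~\ref{prop-sign}; the one genuinely delicate step, which I expect to be the main obstacle, is the orientation computation in the ribbon and clasp cases. One must fix the orientation induced on $K=\partial\Sigma$ and on each sheet, set up the local coordinates from the proof of Proposition~\ref{prop-sign} at both endpoints at once, and check that the enter/exit behaviour of $K$ along the common boundary arc (ribbon) really reverses the transverse-intersection sign, while the interlocking of the two distinct boundary arcs (clasp) preserves it. This is exactly the sign statement recorded just before the corollary, and making it precise — in particular distinguishing the sign of the transverse intersection of $K$ with $\Sigma$, which is what Proposition~\ref{prop-sign} consumes, from the sign of the self-intersection locus itself — is where the care is required.
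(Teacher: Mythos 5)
Your proposal is correct and follows essentially the paper's own route: the paper, too, obtains the corollary by applying Proposition~\ref{prop-sign} at each endpoint of the intersection arc that lies on $K$ (one such endpoint for a branch, two for a ribbon or clasp), with the endpoint-sign bookkeeping --- ribbon endpoints are crossings of $K$ through the other sheet of \emph{opposite} signs, clasp endpoints crossings of the \emph{same} sign --- yielding (1) and (2), and the smoothing at the branch point (Figure~\ref{branch}) yielding (3). One remark: your orientation analysis is the consistent one, since the paper's sentence immediately preceding the corollary transposes ``same sign'' and ``opposite signs'' for ribbon versus clasp (an evident slip: the corollary itself, Definition~\ref{def sign of singularity}, and the count of $a_{\rho}s$ negative hyperbolic points in Section~\ref{section Sigma_b} all require exactly the signs you derive).
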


It makes sense to define {\em the sign} for clasp and branch arcs:
\begin{definition}\label{def sign of singularity}
\begin{enumerate}
\item If both end points of a clasp arc are positive (negative) intersections of $K$ and $\Sigma$, then we say the {\em sign} of the clasp is {\em positive} ({\em negative}).
\item If the end point $p=l \cap K$ of a branch arc is a positive (negative) intersection, then we say the {\em sign} of the branch arc is {\em positive} ({\em negative}).
\end{enumerate}
\end{definition}


\subsection{Construction of the embedded surface $\Sigma_b$ }\label{section Sigma_b}

In Section~\ref{section check F_b} we have defined a Seifert surface $\Sigma_b$ for the case $k=0$. 

When $k\neq 0$, Example~\ref{example of singularities} shows that the immersed surface $\hat{F}_b$ has branch, clasp, and ribbon intersections.
In this section, we construct an embedded surface $\Sigma_b$ by resolving these singularities.

When $k>0$, as shown in Figure~\ref{foliation-2}-(1,2), we can make all the branch, ribbon and clasp arcs transverse to the characteristic foliation $\mathcal{F}_{{\hat F}_b}$.
\begin{figure}[htpb!]
\begin{center}
\begin{picture}(400, 275)
\put(0,0){\includegraphics{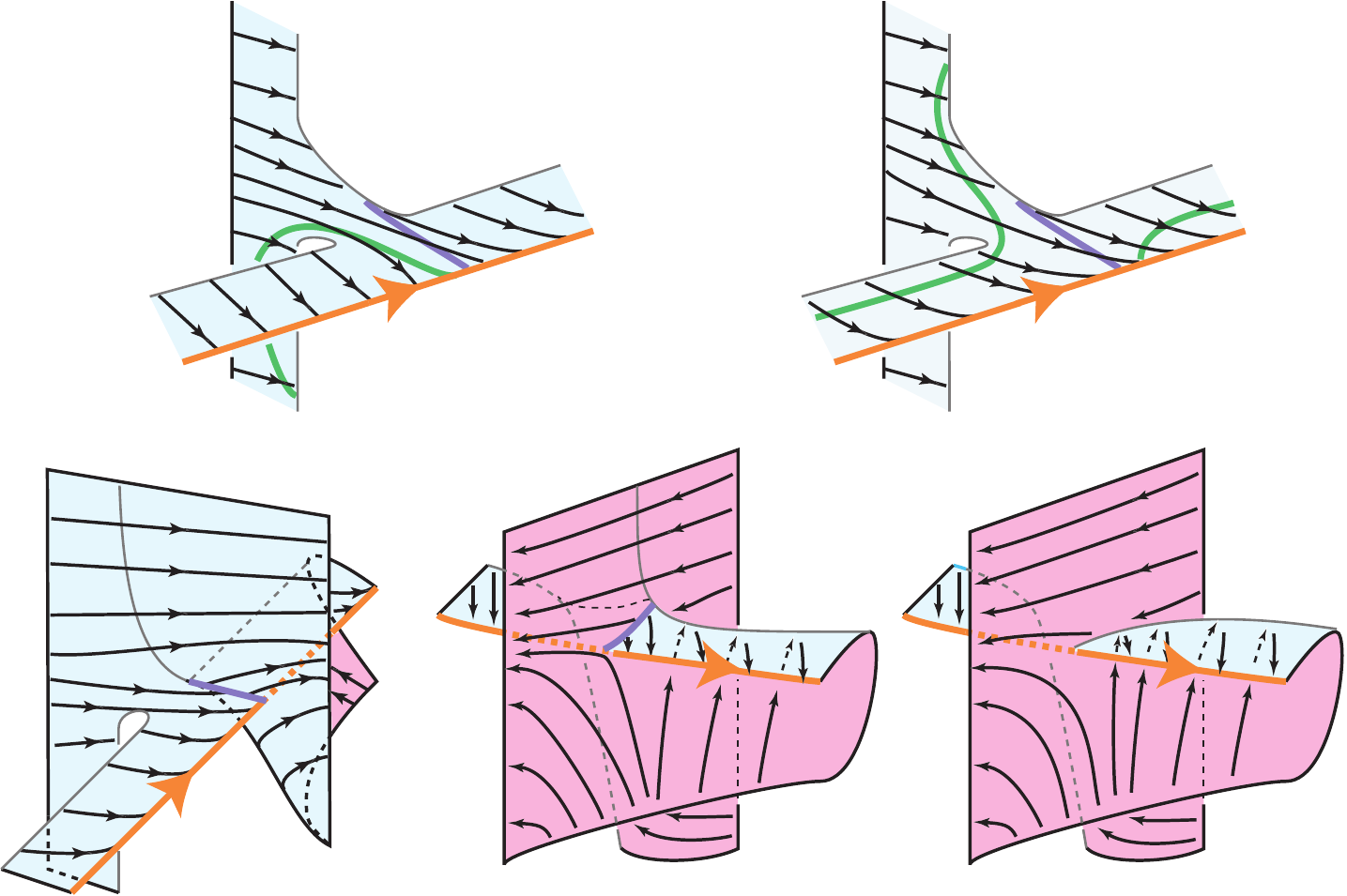}}
\put(35, 250){(1)}
\put(240, 250){(2)}
\put(-10, 120){(3)}
\put(135, 120){(4)}
\put(280, 120){(5)}
\end{picture}
\caption{Clasp (light green) and branch (dark purple) intersections are transverse to characteristic foliations (top). Characteristic foliation near a branch singularity and its resolution (bottom).}\label{foliation-2}
\end{center}
\end{figure}
We apply the argument in Section~\ref{section resolution} and construct an embedded surface $\Sigma_b$.
Since all the signs of the branch and clasp arcs are negative, Example~\ref{example of singularities} and Corollary~\ref{cor-sign} imply that, when $s>0$, the resolution of these self-intersections creates, in total, $a_\rho + 2(\frac{1}{2}a_{\rho}(s-1))=a_{\rho}s$ negative hyperbolic singularities. When $s=0$ there are no branch or clasp intersections, so no new hyperbolic points are created.

When $k<0$, a similar argument holds.


\section{ The Self linking number }\label{section sl}

We finally compute the self-linking number $sl(b, [\Sigma_b])$ of $b$ relative to the embedded surface $\Sigma_b$.

\begin{theorem}\label{sl-thm}
Let $b$ be a null-homologous transverse braid in the open book decomposition $(A, D^k)$ satisfying Assumption~\ref{position-assumption}. There exists a Seifert surface $\Sigma_b$ of $b$ which satisfies the formula:
$$
sl(b, [\Sigma_b])=-n+a_{\sigma}+a_{\rho}(1-s).
$$
In particular, when $k\ne 0$, $sl(b, [\Sigma_b])$ does not depend on the choice of Seifert surface and hence we have the following general formula:
$$sl(b) = -n+a_{\sigma}+a_{\rho}(1-s).$$
\end{theorem}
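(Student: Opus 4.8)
The plan is to evaluate $sl(b,[\Sigma_b])$ by counting the singularities of the characteristic foliation on the embedded surface $\Sigma_b$ constructed in Section~\ref{section 2}. Recall the classical formula (Bennequin, Eliashberg): if $K=\partial\Sigma$ is transverse and the characteristic foliation $\mathcal F_\Sigma$ is of Morse--Smale type, then
$$sl(K,\Sigma)=-(e_+-h_+)+(e_--h_-),$$
where $e_\pm$ and $h_\pm$ count the positive/negative elliptic and hyperbolic points of $\mathcal F_\Sigma$. Since every singularity of $\mathcal F_{\Sigma_b}$ has been located and signed during the construction, the proof reduces to bookkeeping. I would carry out the case $k>0$ in full and observe that $k<0$ is symmetric and $k=0$ is a degenerate special case.

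For the elliptic count, the construction of $F_b$ places one positive elliptic point on each of the $n$ disks $\delta_i$; Lemma~\ref{no singularity on band} together with Remark~\ref{remark sing points} adds $s$ positive elliptic points from the $\omega$-disks; and Remark~\ref{sign of elliptic points} adds $s$ negative elliptic points from the $\mathcal D$-disks. Hence $e_+=n+s$ and $e_-=s$, so that $-e_++e_-=-n$.

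For the hyperbolic count I would organize the points by origin and track only the net $h_+-h_-$, since that is all the formula detects. The $\sigma$-bands contribute $a_\sigma$ (a positive band is a positive hyperbolic, a negative band a negative one), and the $\mathfrak A$-annuli contribute $a_\rho$ by Figure~\ref{foliation}. The remaining hyperbolic points arise from resolving the self-intersections of $\hat F_b$ enumerated in Example~\ref{example of singularities}. By Corollary~\ref{cor-sign} each ribbon resolution creates one positive and one negative hyperbolic point and so is invisible to $sl$; each of the $a_\rho$ negative branch arcs creates one negative hyperbolic point, and each of the $\tfrac12 a_\rho(s-1)$ negative clasp arcs creates two negative hyperbolic points, for a total of $a_\rho+a_\rho(s-1)=a_\rho s$ negative hyperbolic points. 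Thus $h_+-h_-=a_\sigma+a_\rho-a_\rho s$, and combining with the elliptic count,
$$sl(b,\Sigma_b)=(-e_++e_-)+(h_+-h_-)=-n+\bigl(a_\sigma+a_\rho-a_\rho s\bigr)=-n+a_\sigma+a_\rho(1-s).$$

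Finally, to upgrade this to the surface-independent statement for $k\neq0$, I would invoke Claim~\ref{ob}: in that case $M_{(A,D^k)}$ is a lens space, hence a rational homology sphere with $H_2=0$, so any two Seifert surfaces of $b$ are homologous and the relative Euler number $sl(b,[\Sigma])$ does not depend on the choice, giving $sl(b)=-n+a_\sigma+a_\rho(1-s)$. For $k=0$ one has $s=0$ and no $\omega$- or $\mathcal D$-disks, so $e_-=0$ and the same computation yields $-n+a_\sigma$, but here $S^1\times S^2$ has $H_2\neq0$ and the value genuinely depends on $[\Sigma_b]$. I expect the main obstacle to be the sign bookkeeping in the resolution step---in particular justifying that all branch and clasp arcs are negative and that the vertical annuli forming $\tilde F_b$ introduce only ribbon intersections with the $\mathcal D$-disks, which cancel in $sl$---rather than the final arithmetic, which is then immediate.
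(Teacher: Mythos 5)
Your proposal is correct and follows essentially the same route as the paper's own proof: the identical singularity bookkeeping $e_+=n+s$, $e_-=s$, $h_+-h_-=a_\sigma+a_\rho-a_\rho s$ (your formula $-(e_+-h_+)+(e_--h_-)$ is just an algebraic rearrangement of the paper's $-(e^+-e^-)+(h^+-h^-)$), with the resolution count $a_\rho+2\cdot\tfrac12 a_\rho(s-1)=a_\rho s$ from the negative branch and clasp arcs matching Section~\ref{section Sigma_b} exactly, and the same appeal to $H_2(L(k,q);\Z)=0$ for surface-independence when $k\neq 0$. Your explicit remark that ribbon resolutions cancel in pairs is, if anything, slightly more careful than the paper's summary line, and the sign issue you flag as the main obstacle is precisely what the paper settles via Proposition~\ref{prop-sign} and the negative intersection of the $\mathcal D$-disks with $\gamma$.
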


\begin{remark}
\begin{itemize}
\item
When $k\neq 0$, our manifold $M_{(A, D^k)}$ is a lens space (Claim~\ref{ob}) which has $H_2(L(k,q), \Z)=0$. Therefore, the self linking number does not depend on choice of a Seifert surface and we can denote $sl(b, [\Sigma_b])$ simply by $sl(b)$.

\item
The null-homologous condition ensures $a_\rho=0$ when $k=0$ (Corollary~\ref{def of s}).

\item
When $a_\rho=0$ we exactly obtain the Bennequin's formula (\ref{Bennequin-eq}).
\end{itemize}
\end{remark}

\begin{proof}[Proof of Theorem~\ref{sl-thm}]

Let $\Sigma_b$ be the surface constructed in Section~\ref{section Sigma_b}.

It is known (see \cite{E1} for example) that
\begin{equation}\label{sl-formula}
sl(b, [\Sigma_b])=-(e^+-e^-)+(h^+-h^-),
\end{equation}
where $e^+$ ($e^-$) and $h^+$ ($h^-$) represent the number of positive (negative) elliptic and positive (negative) hyperbolic singularities of the characteristic foliation ${\mathcal F}_{\Sigma_b}$ on $\Sigma_b$.
Let $h^+_\sigma$ ($h^-_\sigma$) be the number of $\sigma_i$'s ($\sigma_i^{-1}$'s) which appear in the braid word for $b$.
Then we have $a_{\sigma}=h^+_{\sigma}-h^-_{\sigma}$, the
sign count of hyperbolic singularities on $\Sigma_b$ given by the
bands joining $\delta$-disks as in Figure~\ref{surface1}-(1).

Based on Remarks~\ref{remark sing points}, \ref{sign of elliptic points} and Section~\ref{section Sigma_b}, we summarize the count of singularities: 
\begin{eqnarray*}
e^+ &=& (n; \small\mbox{ on $\delta$-disks}) + (s; \small\mbox{  on $\omega$-disks}) \\
e^- &=& (s; \small\mbox{ on $\mathcal D$-disks) } \\
h^+ &=& (h^+_\sigma; \small\mbox{ on $+$bands between $\delta$-disks}) + (a_{\rho}; \small\mbox{ on bands between $\delta_n$ and ${\mathfrak A}$-annuli}) \\
h^- &=& (h^-_\sigma;  \small\mbox{ on $-$bands between $\delta$-disks}) + (a_{\rho} s;
\small\mbox{ by resolution of branches, clasps, ribbons})
\end{eqnarray*}
By (\ref{sl-formula}) we obtain the desired formula.
\end{proof}

The Bennequin-Eliashberg inequality~\cite{Ben, El} states that contact structure $(M, \xi)$ is tight if and only if
\begin{equation}\label{bennequin-inequality}
sl(K, [\Sigma]) \leq - \chi(\Sigma)
\end{equation}
for any $(K, \Sigma)$ a null-homologous transverse knot and its Seifert surface.

\begin{corollary}\label{cor of sl-formula}

The contact structure $(M_{(A, D^k)}, \xi_k)$ is tight if and only if for any braid $b \subset (A, D^k)$ inequality $sl(b) \leq - \chi(\Sigma_b)$ holds.

\end{corollary}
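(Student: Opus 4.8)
The plan is to recognize Corollary~\ref{cor of sl-formula} as nothing more than the Bennequin--Eliashberg inequality (\ref{bennequin-inequality}) transported across the braiding correspondence of Theorem~\ref{elena}. Since (\ref{bennequin-inequality}) already characterizes tightness as the validity of $sl(K,[\Sigma])\le-\chi(\Sigma)$ over \emph{all} null-homologous transverse knots $K$ and their Seifert surfaces $\Sigma$, and since every such $K$ is transversely isotopic to a closed braid in $(A,D^k)$, the two quantifications should coincide. I would therefore prove the two implications separately: the forward one is an immediate restriction, and the reverse one is where Theorem~\ref{elena} carries the weight.

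For the forward implication, suppose $(M_{(A,D^k)},\xi_k)$ is tight. Then (\ref{bennequin-inequality}) gives $sl(K,[\Sigma])\le-\chi(\Sigma)$ for every null-homologous transverse knot and every Seifert surface. Specializing to $K=b$ and $\Sigma=\Sigma_b$ yields $sl(b,[\Sigma_b])\le-\chi(\Sigma_b)$, and Theorem~\ref{sl-thm} identifies the left-hand side with $sl(b)$. When $k\ne0$ the lens space $M_{(A,D^k)}$ satisfies $H_2=0$, so $sl(b)$ is independent of the Seifert surface and the conclusion is unambiguous. Thus the forward direction is pure substitution.

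For the reverse implication I would argue by contrapositive. Assume $\xi_k$ is overtwisted; then (\ref{bennequin-inequality}) supplies a null-homologous transverse knot $K$ with a Seifert surface $\Sigma$ violating the bound, $sl(K,[\Sigma])>-\chi(\Sigma)$. By Theorem~\ref{elena} I transversely isotope $K$ to a closed braid $b$ in $(A,D^k)$. Because a transverse isotopy is realized by an ambient isotopy, it carries $\Sigma$ to a Seifert surface $\Sigma'$ of $b$ with $\chi(\Sigma')=\chi(\Sigma)$, and, $sl$ being a transverse-isotopy invariant computed relative to the carried surface, $sl(b,[\Sigma'])=sl(K,[\Sigma])$. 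Hence $sl(b,[\Sigma'])>-\chi(\Sigma')$, producing a braid for which the inequality fails and completing the contrapositive.

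The step I expect to require the most care is the bookkeeping in the reverse direction, and in particular the reading of the statement. The braiding isotopy returns a possibly non-canonical Seifert surface $\Sigma'$, so the inequality in the corollary must be understood as ranging over \emph{every} Seifert surface $\Sigma_b$ of $b$, not merely the surface built in Section~\ref{section 2}; indeed the specific $\Sigma_b$ of Section~\ref{section 2} is far from minimal genus, so testing it alone gives a strictly weaker condition and cannot by itself detect overtwistedness. Once this quantification is fixed, the only remaining point is that the \emph{value} $sl(b,[\Sigma'])$ is genuinely preserved under the braiding isotopy, which amounts to the fact that $sl(\cdot,[\cdot])$ depends only on the relative homology class of the surface: automatic from $H_2(M_{(A,D^k)},\Z)=0$ when $k\ne0$, and otherwise a direct consequence of invariance under ambient isotopy. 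With these observations in place, Corollary~\ref{cor of sl-formula} is exactly (\ref{bennequin-inequality}) read through Theorem~\ref{elena}.
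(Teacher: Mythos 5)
Your proof does not establish the corollary the paper actually states, and the place where you hedge is precisely where the gap lies. Throughout the paper $\Sigma_b$ denotes the \emph{specific} embedded surface constructed in Section~\ref{section Sigma_b} (for $k=0$, the surface $\tilde F_b$), and the corollary asserts that testing the inequality against this one canonical surface, over all braids $b$, already characterizes tightness; this is its entire content (compare Remark~\ref{remark-ineq}, which stresses that the immersed $\hat F_b$ \emph{fails} the test even in tight cases, so which surface one tests genuinely matters). Your reinterpretation --- that the inequality ``must be understood as ranging over every Seifert surface of $b$'' --- replaces the corollary by a tautological transport of the Bennequin--Eliashberg inequality (\ref{bennequin-inequality}) through Theorem~\ref{elena}, and your supporting claim that the high-genus canonical $\Sigma_b$ ``cannot by itself detect overtwistedness'' is false. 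The paper's own proof refutes it by direct computation: from $sl(b,[\Sigma_b])=-(e^+-e^-)+(h^+-h^-)$ and $\chi(\Sigma_b)=(e^++e^-)-(h^++h^-)$ one gets $sl(b)+\chi(\Sigma_b)=2(e^--h^-)$, so the inequality is equivalent to $0\leq h^--e^-=h^-_\sigma+s(a_\rho-1)$, where the singularity counts come from Theorem~\ref{sl-thm}. By Claim~\ref{goodman}, $\xi_k$ is tight if and only if $k\geq 0$. When $k\geq 0$ one has $s(a_\rho-1)=s(ks-1)\geq 0$, so the inequality holds for \emph{every} braid with its canonical surface; when $k<0$, the one-strand braid $b=\rho^{k}$ (null-homologous by Corollary~\ref{def of s}, with $h^-_\sigma=0$, $s=1$, $a_\rho=k$) gives $h^--e^-=k-1<0$, i.e.\ the canonical surface itself witnesses the violation.

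Note also that your argument cannot be repaired into the stated corollary without such a computation. Your forward direction is fine (it is a specialization of (\ref{bennequin-inequality}); the paper instead verifies it combinatorially, which doubles as a consistency check on Theorem~\ref{sl-thm}). But your reverse direction only produces a braid $b$ together with \emph{some} carried surface $\Sigma'$ with $sl(b)>-\chi(\Sigma')$. Since the canonical $\Sigma_b$ generally has much larger genus than $\Sigma'$, one has $-\chi(\Sigma_b)\geq-\chi(\Sigma')$, and (for $k\neq 0$, where $sl(b)$ is surface-independent) a violation relative to $\Sigma'$ in no way forces $sl(b)>-\chi(\Sigma_b)$ --- the inequality against the canonical surface is the \emph{easier} one to satisfy, which is exactly why the $k<0$ direction needs the explicit singularity count $h^--e^-=h^-_\sigma+s(a_\rho-1)$ rather than an appeal to (\ref{bennequin-inequality}) plus Theorem~\ref{elena}.
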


\begin{proof}[Proof of Corollary~\ref{cor of sl-formula}]

As $\chi(\Sigma_b)= (e^+ + e^-) - (h^+ + h^-),$
the inequality $sl(b) \leq - \chi(\Sigma_b)$ is equivalent to $0 \leq h^- - e^- = h^-_\sigma + s(a_\rho -1)$.
Claim~\ref{goodman} states that $(M_{(A, D^k)}, \xi_k)$ is tight if and only if $k\geq 0$.

When $k\geq 0$, we have $s(a_\rho -1) = s(ks-1) \geq 0$, thus $h^- - e^-  \geq 0$.

When $k<0$, we have $s(a_\rho -1) < 0$ by Corollary~\ref{def of s}. Therefore, there exists $b$ for which  $h^- - e^-  <0$.
\end{proof}

\begin{remark}\label{remark-ineq}
It is interesting to note that the Bennequin-Eliashberg inequality is {\em not} satisfied for the immersed surface $\hat{F}_b$ even for the tight cases.
\end{remark}

Next, we study behavior of $sl(b, [\Sigma_b])$ under braid stabilizations.

Let $b$ be a null-homologous braid in the open book $(A, D^k)$.
For $\epsilon\in\{+, -\}$ let $b_\epsilon^\gamma$ (resp. $b_\epsilon^{\gamma'})$ denote the braid obtained from $b$ after an $\epsilon$-stabilization about the binding $\gamma$ (resp. $\gamma')$.
By Theorem~\ref{elena2}, braids $b, b_+^\gamma$ and $b_+^{\gamma'}$ are transversely isotopic regardless of choice of stabilization arc $a$.
Etnyre's \cite[Theorem 3.8]{E0} implies that if $b$ is a one component link, then a negative stabilization is unique up to transverse isotopy, regardless of choice of arc $a$, thus $b_-^\gamma=b_-^{\gamma'}$.

\begin{corollary}\label{thm-stab}
we have:
\begin{eqnarray}
sl(b,[\Sigma_b])= sl(b_+^{\gamma},[\Sigma_{b_+^{\gamma}}])=
sl(b_-^{\gamma},[\Sigma_{b_-^{\gamma}}])+2, \label{g}\\
sl(b,[\Sigma_b])= sl(b_+^{\gamma'},[\Sigma_{b_+^{\gamma'}}])=
sl(b_-^{\gamma'},[\Sigma_{b_-^{\gamma'}}])+2. \label{g'}
\end{eqnarray}
\end{corollary}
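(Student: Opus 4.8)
The plan is to apply the formula of Theorem~\ref{sl-thm} to each of the four stabilized braids and reduce everything to bookkeeping of the quantities $n$, $a_\sigma$, $a_\rho$ and $s=a_\rho/k$. Concretely, I would determine how the quadruple $(n,a_\sigma,a_\rho,s)$ transforms under each of $b\mapsto b_\pm^\gamma$ and $b\mapsto b_\pm^{\gamma'}$, substitute into $sl=-n+a_\sigma+a_\rho(1-s)$, and read off the four claimed identities. Each stabilized braid is again null-homologous, since a stabilization preserves the underlying link type by Theorem~\ref{elena2}, so the integer $s$ remains well defined by Corollary~\ref{def of s}; when $k=0$ one has $a_\rho=0$ throughout and the $a_\rho$- and $s$-dependent terms simply drop out, so the same computation covers that case.

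The two stabilizations about $\gamma'$ are essentially already analysed. The proof of Proposition~\ref{assumption for a_rho} shows that a positive stabilization about $\gamma'$ sends $n\mapsto n+1$, $a_\sigma\mapsto a_\sigma+1+2a_\rho$, and $a_\rho\mapsto a_\rho+k$. A negative stabilization about $\gamma'$ follows the identical picture of Figure~\ref{rho-n}, the only change being that the terminal kink is $\sigma_n^{-1}$ rather than $\sigma_n$, so that $a_\sigma\mapsto a_\sigma-1+2a_\rho$ while $n$ and $a_\rho$ transform exactly as before. In both cases $a_\rho\mapsto a_\rho+k$, hence $s\mapsto s+1$ for $k\neq0$.

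For the stabilizations about $\gamma$ I would argue geometrically that no $\rho$'s are created. Since the braid points $x_1,\dots,x_n$ lie between $\gamma$ and the Dehn-twist curve $\alpha$, the new strand produced by stabilizing about $\gamma$ can be kept on the $\gamma$-side of $\alpha$; bringing it to standard braid position therefore never crosses $\alpha$, and the monodromy $D^k$ contributes nothing. Thus $n\mapsto n+1$, $a_\rho\mapsto a_\rho$, $s\mapsto s$, while the single kink changes $a_\sigma$ by $+1$ for a positive and by $-1$ for a negative stabilization. This is the essential contrast with the $\gamma'$ case, where the new strand must be dragged across $\alpha$, picking up $(\rho)^k$.

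Substituting into the formula then finishes the proof. For the positive cases one checks, using $sk=a_\rho$, that
$$sl(b_+^{\gamma'})=-(n+1)+(a_\sigma+1+2a_\rho)+(a_\rho+k)(-s)=-n+a_\sigma+a_\rho(1-s)=sl(b,[\Sigma_b]),$$
and likewise $sl(b_+^{\gamma})=-(n+1)+(a_\sigma+1)+a_\rho(1-s)=sl(b,[\Sigma_b])$; this is consistent with the transverse invariance of $sl$ together with Theorem~\ref{elena2}-(2) and serves as a check (the formula computation is needed in its own right for the surface-dependent case $k=0$). For the negative cases the extra $-2$ in $a_\sigma$ relative to the corresponding positive case survives the cancellation, giving $sl(b_-^{\gamma},[\Sigma_{b_-^{\gamma}}])=sl(b_-^{\gamma'},[\Sigma_{b_-^{\gamma'}}])=sl(b,[\Sigma_b])-2$, which is exactly (\ref{g}) and (\ref{g'}). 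The main obstacle is the geometric bookkeeping of the previous paragraph: verifying that a $\gamma$-stabilization introduces no rotation about the hole while a $\gamma'$-stabilization introduces precisely $k$ of them and two $\sigma_n$'s per $\rho$, and confirming that null-homology persists so that the passage $s\mapsto s+1$ (resp.\ $s\mapsto s$) is legitimate.
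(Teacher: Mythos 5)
Your proposal is correct and takes essentially the same approach as the paper: the paper likewise records the data changes $n \to n+1$, $a_\sigma \to a_\sigma \pm 1$ for stabilizations about $\gamma$ and $n \to n+1$, $a_\sigma \to a_\sigma \pm 1 + 2a_\rho$, $a_\rho \to a_\rho + k$, $s \to s+1$ for stabilizations about $\gamma'$ (citing the proof of Proposition~\ref{assumption for a_rho}), then substitutes into Theorem~\ref{sl-thm} exactly as you do. Your additional geometric justification that a $\gamma$-stabilization keeps the new strand on the $\gamma$-side of $\alpha$ so no $\rho$'s arise, and your explicit treatment of the $k=0$ case, merely make explicit what the paper leaves implicit.
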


\begin{proof}[Proof of Corollary~\ref{thm-stab}]

A positive (negative) stabilization about $\gamma$ changes $n \to n+1$ and $a_\sigma \to a_\sigma +1$ ($a_\sigma \to a_\sigma -1$). Applying Theorem~\ref{sl-thm}, we get (\ref{g}).

For (\ref{g'}), as we have seen in the proof of Proposition~\ref{assumption for a_rho}, by a positive (negative) braid stabilization, we have the following data change:
$$
n  \to n+1, \ \
a_{\sigma}  \to   a_{\sigma}+1 + 2 a_\rho, \ \
s  \to  s+1, \ \
a_{\rho}  \to  a_{\rho}+k,
$$
$$
(n  \to n+1, \ \
a_{\sigma}  \to   a_{\sigma}-1 + 2 a_\rho, \ \
s  \to  s+1, \ \
a_{\rho}  \to  a_{\rho}+k).
$$
Applying Theorem~\ref{sl-thm}, we have
\begin{eqnarray*}
sl(b_+^{\gamma'}, [\Sigma_{b_+^{\gamma'}}])
&=& -(n+1)+(a_\sigma + 1 + 2a_\rho) + (a_{\rho}+k) (1- (s+1)) \\
&=& -n+a_{\sigma}+a_{\rho}(1-s) \\
&=& sl(b, [\Sigma_b]).
\end{eqnarray*}
A similar computation leads to $sl(b_-^{\gamma'}, [\Sigma_{b_-^{\gamma'}}])+2 = sl(b, [\Sigma_b]).$
\end{proof}


\section{Seifert fibered manifolds}\label{section-seifert}

Let $S$ be an oriented pair of pants with boundary circles $\gamma_i$ for $i=1, 2, 3$. See Figure~\ref{3-punk-disk}.
\begin{figure}[htpb!]
\begin{center}
\begin{picture}(375, 160)
\put(0,0){\includegraphics{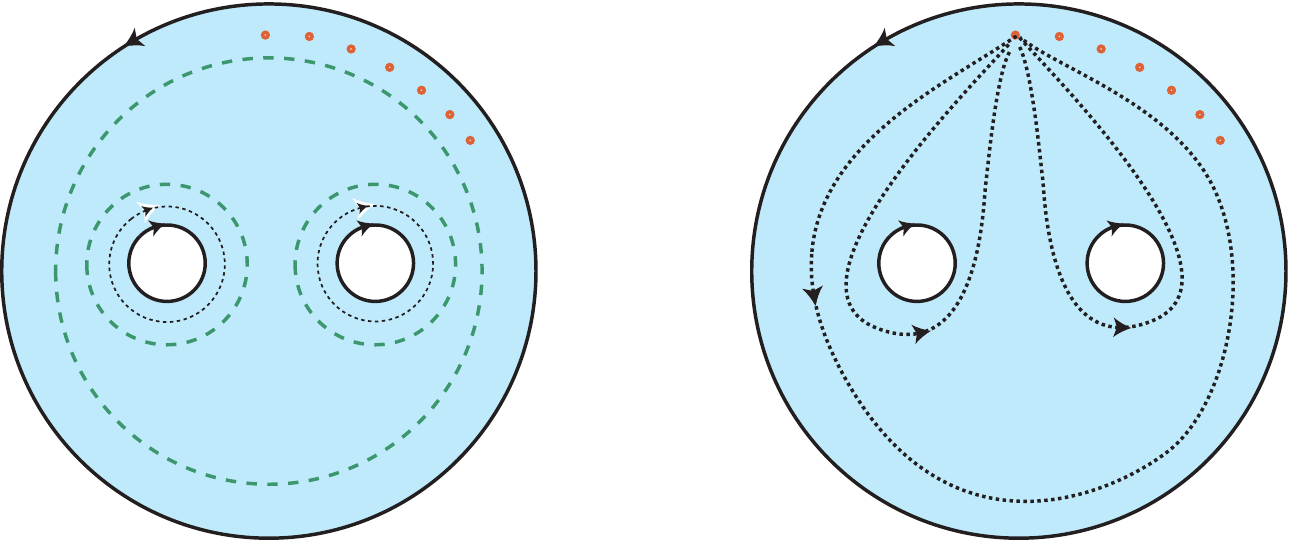}}
\put(-10, 80){$\gamma_1$}
\put(38, 80){$\gamma_2$}
\put(98, 80){$\gamma_3$}
\put(80, 15){$\alpha_1$}
\put(50, 60){$\beta_2$}
\put(110, 60){$\beta_3$}
\put(38, 48){$\alpha_2$}
\put(98, 48){$\alpha_3$}
\put(146, 117){$x_1$}
\put(138, 132){$x_2$}
\put(75, 160){$x_n$}
\put(290, 160){$x_n$}
\put(300, 15){$\rho_1$}
\put(275, 55){$\rho_2$}
\put(318, 55){$\rho_3$}
\end{picture}
\caption{A pair of pants $S$.}\label{3-punk-disk}
\end{center}
\end{figure}
Let $\alpha_i$ be circles parallel to $\gamma_i$. Denote the positive Dehn twist about $\alpha_i$ by $D_i$. Let $k_i$ be an integer, $i=1,2,3$. In this section we study closed braids in the open book decomposition $(S, D_1^{k_1} \circ D_2^{k_2} \circ D_3^{k_3})$.
The corresponding manifold, which we denote by $M_{k_1, k_2, k_3}(=M)$, is a Seifert fibered space over the orbifold of signature $(0, k_1, k_2, k_3)$. 
A similar argument as in the proof of Claim~\ref{ob} tells that  $M_{k_1, k_2, k_3}$ has surgery descriptions as in Figure~\ref{surg-diagram}. 
\begin{figure}[htpb!]
\begin{center}
\begin{picture}(340, 100)
\put(0,0){\includegraphics{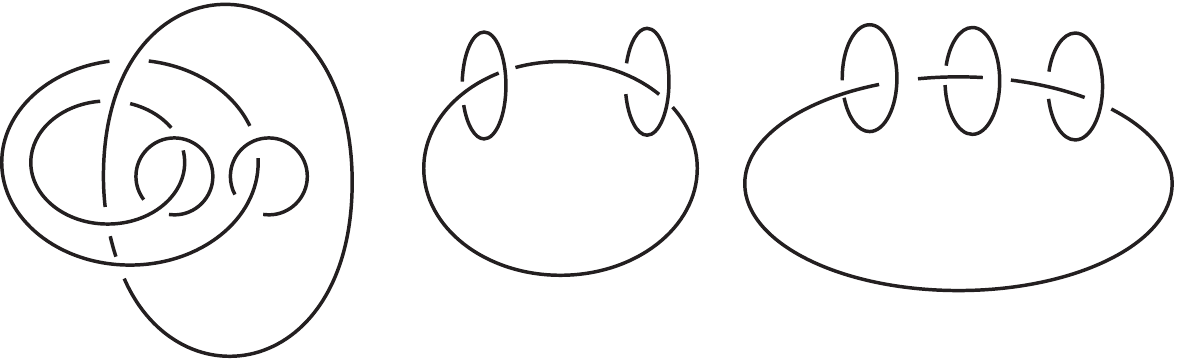}}
\put(-5, 45){\tiny$0$}
\put(8, 45){\tiny$0$}
\put(45, 38){\tiny $-\frac{1}{k_2}$}
\put(75, 38){\tiny $-\frac{1}{k_3}$}
\put(55, 5){\tiny $-\frac{1}{k_1}$}
\put(155, 15){\tiny$-\frac{1}{k_1}$}
\put(140, 60){\tiny$k_2$}
\put(180, 60){\tiny$k_3$}
\put(270, 13){\tiny$0$}
\put(250, 60){\tiny$k_1$}
\put(280, 60){\tiny$k_2$}
\put(310, 60){\tiny$k_3$}
\put(-5, 90){\small(1)}
\put(110, 90){\small(2)}
\put(215, 90){\small(3)}
\end{picture}
\caption{Surgery descriptions for $M_{k_1, k_2, k_3}$.}\label{surg-diagram}
\end{center}
\end{figure}
In Sketch (1), the two circles with slope $0$ represent the unlinked unknots through the holes of $S$, $\gamma_2$ and $\gamma_3$ (cf. the unknot $U$ in the proof of Claim~\ref{ob}). Slam-dunk operations are applied in the passage Sketch $(1)\to (2) \to (3)$.
Etnyre-Ozbagci \cite[page 3136]{EO} implies:
\begin{equation}
H_1(M_{k_1, k_2, k_3};\Z) = \langle c_2, c_3\ | \ (k_1 + k_2) c_2 + k_1 c_3= k_1 c_2 + (k_1+k_3) c_3 = 0 \ \rangle, \label{prop-homology} 
\end{equation}
where $c_j \in \pi_1(S)$ is the standard generator corresponding to the boundaries $\gamma_j$ of $S$.
We remark that $H_2(M;\Z)$ is non-trivial in general, which is distinct from the lens spaces.

\begin{proposition}
Let $\xi_{k_1, k_2, k_3}$ denote the contact structure for $M_{k_1, k_2, k_3}$ compatible with the open book $(S, D_1^{k_1} \circ D_2^{k_2} \circ D_3^{k_3})$ via the Giroux correspondence \cite{G}.
Then $\xi_{k_1, k_2, k_3}$ is tight if and only if $k_1, k_2, k_3 \geq 0$.
\end{proposition}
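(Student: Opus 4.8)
The plan is to prove the two implications separately, in direct parallel with Claim~\ref{goodman}. For the ``if'' direction, suppose $k_1,k_2,k_3\geq 0$. Then the monodromy $D_1^{k_1}\circ D_2^{k_2}\circ D_3^{k_3}$ is a (possibly empty) product of positive Dehn twists, so by Etnyre--Honda \cite[Lemma 3.2]{EH} the compatible contact structure is Stein fillable, hence tight. Thus $\xi_{k_1,k_2,k_3}$ is tight, exactly as in the case $k\geq 0$ of Claim~\ref{goodman}.

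For the ``only if'' direction I will prove the contrapositive: if some $k_i<0$ then $\xi_{k_1,k_2,k_3}$ is overtwisted. By the symmetry of the three boundary components we may assume $k_1<0$, and I will show that $\phi=D_1^{k_1}\circ D_2^{k_2}\circ D_3^{k_3}$ is not right-veering. The key observation is that the twisting of $\phi$ around a fixed boundary $\gamma_i$ is insensitive to the Dehn twists $D_j$ with $j\neq i$: since $\alpha_j$ is disjoint from a collar of $\gamma_i$, only $D_i^{k_i}$ contributes near $\gamma_i$, so the fractional Dehn twist coefficient of $\phi$ at $\gamma_i$ equals $k_i$. As $k_1<0$, this coefficient is negative at $\gamma_1$, whence $\phi$ fails to be right-veering. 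By the Honda--Kazez--Mati\'{c} theorem \cite{HKM}, a non-right-veering monodromy supports an overtwisted contact structure, so $\xi_{k_1,k_2,k_3}$ is overtwisted.

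I expect the overtwisted direction to be the only genuine obstacle, the subtle point being to decouple the effect of the single negative twist from the other two. The right-veering route above achieves this cleanly through the fractional Dehn twist coefficient. A more hands-on alternative, exactly matching the annulus case, is to produce a Goodman sobering arc: taking the properly embedded essential arc $a$ with both endpoints on $\gamma_1$ that separates $\gamma_2$ from $\gamma_3$, one has $\phi(a)=D_1^{k_1}(a)$ because $a$ is disjoint from $\alpha_2$ and $\alpha_3$, and the negative twist $D_1^{k_1}$ then forces every intersection of $a$ with $\phi(a)$ to be negative, exhibiting $a$ as sobering; Goodman's criterion \cite[Theorem 1.2]{Go} again gives overtwistedness. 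Either route also re-derives the annulus statement of Claim~\ref{goodman} as the special case of a single hole.
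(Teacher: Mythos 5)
Your proof is correct, and its two halves compare with the paper's as follows. The tight direction is identical: the paper also deduces tightness from Stein fillability via Etnyre--Honda \cite[Lemma 3.2]{EH} (it treats $k_1=k_2=k_3=0$ separately only to record Eliashberg's \emph{uniqueness} of the tight structure on $(S^1\times S^2)\#(S^1\times S^2)$, which the proposition does not claim, so your collapsing of the cases loses nothing). For the overtwisted direction the paper's entire argument is your ``hands-on alternative'': the properly embedded essential, boundary non-parallel arc with both endpoints on $\gamma_1$ is a sobering arc in the sense of \cite[Definition 3.2]{Go}, and \cite[Theorem 1.2]{Go} gives overtwistedness; your observation that $\phi(a)=D_1^{k_1}(a)$ because $a$ avoids $\alpha_2,\alpha_3$ is precisely the decoupling implicit in the paper's one-line assertion. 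Your primary route through right-veering and the Honda--Kazez--Mati\'c theorem is a genuinely different key lemma (note their paper is not in this article's bibliography, so the citation would need to be added). It buys a more systematic criterion that generalizes beyond planar pages, but one caution: the fractional Dehn twist coefficient is not literally computed ``in a collar of $\gamma_i$'' --- it is defined via the Nielsen--Thurston representative, so your locality justification is heuristic as stated. For this reducible multi-twist the conclusion $c(\gamma_i)=k_i$ is nevertheless standard, and in any case you can bypass the coefficient entirely: your own arc $a$ is visibly sent to the left at both endpoints by $D_1^{k_1}$ with $k_1<0$, which already shows $\phi$ is not right-veering. With that repair (or with your Goodman fallback, which reproduces the paper verbatim), the argument is complete.
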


\begin{proof}

If $k_1=k_2=k_3=0$, then $M_{0,0,0}=(S^1\times S^2) \# (S^1 \times S^2)$. Etnyre-Honda explain in the proof of \cite[Lemma 3.2]{EH} that $\xi_{0,0,0}$ is Stein fillable, hence tight. 
In fact, it is the unique tight structure, which is due to Eliashberg~\cite{El}.

If $k_1, k_2, k_3\geq 0$ and $(k_1, k_2, k_3)\neq(0,0,0)$, Etnyre-Honda \cite[Lemma 3.2]{EH} tells that $\xi_{k_1, k_2, k_3}$ is tight.

If one of $k_i$ is negative, say $k_1<0$, then a properly  embedded boundary {\em non}-parallel essential arc whose both ends sit on $\gamma_1$ is a sobering arc, see \cite[Definition 3.2]{Go}. Thus Goodman's \cite[Theorem 1.2]{Go} implies that $\xi_{k_1, k_2, k_3}$ is overtwisted.
\end{proof}

Let $K$ be a null-homologous transverse knot in $(M, \xi_{k_1, k_2, k_3})$.
By Theorem~\ref{elena} we can identify $K$ with a closed $n$-braid $b$ in $(S, D_1^{k_1} \circ D_2^{k_2} \circ D_3^{k_3})$. 
\begin{assumption}\label{position-assumption-2}
Applying braid isotopy (transverse isotopy), we may assume that there exist points $x_1, \cdots, x_n$ (orange dots in Figure~\ref{3-punk-disk}) sitting between $\gamma_1$ and $\alpha_1$ such that
$b \cap (S\times \{0\}) = \{ x_1, \cdots, x_n \}.$
\end{assumption}

Let $\sigma_i$ be a mapping class of $S$ with $n$ fixed points $x_1, \cdots, x_n$, exchanging $x_i$ and $x_{i+1}$ counter clockwise.
Let $\rho_j$ ($j=1, 2, 3$) be a mapping class which moves $x_n$ around the boundary circle $\gamma_j$ as described in Figure~\ref{3-punk-disk}.
Since $\sigma_i$'s and $\rho_j$'s are generators of the mapping class group and $\rho_1=\rho_2+\rho_3,$ it follows that:

\begin{proposition}
An $n$-strand closed braid in the open book $(S, D_1^{k_1} \circ D_2^{k_2} \circ D_3^{k_3})$ can be written in letters $\{ \sigma_1, \cdots, \sigma_{n-1}, \rho_2, \rho_3 \}$.
\end{proposition}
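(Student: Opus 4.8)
The plan is to reduce the statement to two algebraic facts about the mapping class group of the pair of pants $S$ with the $n$ marked points $x_1,\dots,x_n$: first, that the classes $\sigma_1,\dots,\sigma_{n-1}$ together with $\rho_1,\rho_2,\rho_3$ generate this mapping class group; and second, the relation $\rho_1=\rho_2+\rho_3$ (written additively in the abelianization, or equivalently as a braid-word identity), which allows one to eliminate $\rho_1$ from the generating set. Combining these two facts gives that $\{\sigma_1,\dots,\sigma_{n-1},\rho_2,\rho_3\}$ already generates, which is exactly the claim.

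The key steps, in order, would be the following. First I would set up the correspondence, used implicitly in Definition~\ref{def-braid} and in the earlier annulus case, between an $n$-strand closed braid in $(S,\phi)$ and an element of the mapping class group of $S$ fixing the set $\{x_1,\dots,x_n\}$ (a surface braid group); this is the same identification that let us write an annulus braid in $\{\sigma_1,\dots,\sigma_{n-1},\rho\}$ in the proposition preceding Definition~\ref{def of a_rho}. Next I would recall the standard generating set for the braid group of a surface with boundary: the half-twists $\sigma_i$ exchanging adjacent points, together with the loop generators $\rho_j$ that carry $x_n$ once around each boundary component $\gamma_j$. The pair of pants has three boundary circles, so a priori one needs $\rho_1,\rho_2,\rho_3$; the content of the proposition is that only two of these are needed. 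Finally I would establish $\rho_1=\rho_2+\rho_3$: geometrically, a loop around the outer boundary $\gamma_1$ is freely homotopic (in the thrice-punctured surface) to the concatenation of loops around the two inner boundaries $\gamma_2,\gamma_3$, so the corresponding point-pushing maps satisfy $\rho_1=\rho_2\rho_3$ up to the relevant half-twist conjugations, giving the stated additive relation. Substituting this into the full generating set eliminates $\rho_1$ and yields the result.

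The main obstacle I expect is making the relation $\rho_1=\rho_2+\rho_3$ precise at the level of mapping classes rather than merely at the level of free homotopy classes of loops. In $\pi_1$ of the punctured surface based away from the marked points, the boundary relation $c_1=c_2 c_3$ holds essentially by the definition of a pair of pants (the outer boundary word is the product of the inner ones), and this is consistent with the homology presentation~(\ref{prop-homology}) where only $c_2,c_3$ survive as generators. Promoting this to an identity of point-pushing braid generators requires tracking how $\rho_1$, $\rho_2$, $\rho_3$ act on all $n$ points and accounting for the commutators with the $\sigma_i$ that arise when the pushed point $x_n$ drags past its neighbors; this is the analogue of the computation $\rho_i=\sigma_i^{-1}\cdots\sigma_{n-1}^{-1}\sigma_n^2\sigma_{n-1}\cdots\sigma_i$ carried out in the annulus case. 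Once that bookkeeping is done, the substitution is immediate, so I would present the loop-level relation as the geometric heart of the argument and treat the half-twist corrections as routine conjugations already illustrated in Section~\ref{preliminaries}.
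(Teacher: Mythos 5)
Your proposal takes essentially the same route as the paper: the paper likewise identifies an $n$-strand closed braid with a word in the mapping class group of $S$ fixing $\{x_1,\dots,x_n\}$, notes that the $\sigma_i$'s together with $\rho_1,\rho_2,\rho_3$ generate it, and eliminates $\rho_1$ via the pair-of-pants boundary relation (written $\rho_1=\rho_2+\rho_3$ in the paper). Your discussion of making that relation precise at the level of point-pushing maps is more detail than the paper records, but it fills in rather than departs from the paper's one-line argument.
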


We define symbols $a_\sigma, a_{\rho_i}$, and $s_i$:
\begin{definition}\label{def of a_rho_i}
Let $a_\sigma$ be the exponent sum of $\sigma_i$'s in the braid word for $b$.
Let $a_{\rho_i}$ ($i=2, 3$) be the exponent sum of $\rho_i$ in the braid word for $b$.
Since $b$ is null-homologous, we have $0= [b] = a_{\rho_2} c_2 + a_{\rho_3} c_3$ in $H_1(M;\Z)$. 
By equation (\ref{prop-homology}), there exist $s_2, s_3 \in \Z$, so that
$$0=[b] = s_2 \left\{ (k_1+k_2) c_2 + k_1 c_3 \right\} +
                 s_3 \left\{ k_1 c_2 + (k_1+k_3) c_3 \right\}.$$
Therefore,
\begin{equation}\label{eq matrix}
[a_{\rho_2}, a_{\rho_3}] = [s_2, s_3] \left[
\begin{array}{cc}
k_1+k_2 & k_1 \\
k_1 & k_1 + k_3
\end{array}
\right].
\end{equation}
For special cases;
\begin{enumerate}
\item
when $k_1=k_2=0$ and $k_3\neq 0$, we set $s_2=0$, i.e., $a_{\rho_2}=0$ and $a_{\rho_3}=s_3 k_3,$
\item
when $k_1=k_3=0$ and $k_2\neq 0$, we set $s_3=0$, i.e., $a_{\rho_3}=0$ and $a_{\rho_2}=s_2 k_2,$
\item
when $k_1=k_2=k_3=0$, we set $s_2=s_3=0$, i.e., $a_{\rho_2}=a_{\rho_3}=0$.
\end{enumerate}
\end{definition}

\begin{lemma}\label{s_i positive}
We may assume that $s_2, s_3 \geq 0$ and that $a_{\rho_2}, a_{\rho_3}$ satisfy {\em(\ref{eq matrix})}.
\end{lemma}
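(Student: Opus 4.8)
The plan is to mimic the annulus argument of Proposition~\ref{assumption for a_rho} and Corollary~\ref{def of s}, using positive braid stabilizations about the bindings $\gamma_2$ and $\gamma_3$. By Theorem~\ref{elena2}-(2) a positive stabilization preserves the transverse knot type, so it alters only the braid representative --- and hence the combinatorial data $n$, $a_\sigma$, $a_{\rho_2}$, $a_{\rho_3}$ --- without changing the transverse class of $b$. In particular the null-homologous condition and the relation $0=[b]=a_{\rho_2}c_2+a_{\rho_3}c_3$ of Definition~\ref{def of a_rho_i} persist, so equation (\ref{eq matrix}) still determines a pair $s_2,s_3$. It therefore suffices to show that a positive stabilization about $\gamma_2$ (resp. $\gamma_3$) raises $s_2$ (resp. $s_3$) by one and fixes the other, for then finitely many such moves produce a representative with $s_2,s_3\geq 0$.

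First I would perform a positive stabilization about $\gamma_2$ exactly as in the proof of Proposition~\ref{assumption for a_rho}: pick a point of the top strand, add a new strand $\nu$ inside a tubular neighborhood of $\gamma_2$, and isotope $\nu$ across the pages into the standard position of Assumption~\ref{position-assumption-2} at a new marked point $x_{n+1}$ between $\gamma_1$ and $\alpha_1$. As $\nu$ is carried from near $\gamma_2$ to $x_{n+1}$, its path crosses the Dehn-twist curve $\alpha_2$ once and the curve $\alpha_1$ once, while staying disjoint from $\alpha_3$. Crossing $\alpha_2$ feeds the strand through $D_2^{k_2}$ and inserts $\rho_2^{k_2}$; crossing $\alpha_1$ feeds it through $D_1^{k_1}$ and inserts $\rho_1^{k_1}$. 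Using $\rho_1=\rho_2+\rho_3$ in homology, the inserted word contributes $k_1+k_2$ to $a_{\rho_2}$ and $k_1$ to $a_{\rho_3}$, so $(a_{\rho_2},a_{\rho_3})$ changes by the first row $(k_1+k_2,\,k_1)$ of the matrix in (\ref{eq matrix}), which by that equation is precisely $s_2\mapsto s_2+1$, $s_3\mapsto s_3$. The symmetric computation for $\gamma_3$ yields the change $(k_1,\,k_1+k_3)$, i.e. $s_3\mapsto s_3+1$, $s_2\mapsto s_2$.

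Combining these, I would apply $\max(0,-s_2)$ positive stabilizations about $\gamma_2$ and $\max(0,-s_3)$ about $\gamma_3$ to arrange $s_2,s_3\geq 0$ while keeping (\ref{eq matrix}) valid. In the nondegenerate range $\det\!\begin{bmatrix}k_1+k_2 & k_1\\ k_1 & k_1+k_3\end{bmatrix}=k_1k_2+k_2k_3+k_3k_1\neq 0$ the integers $s_2,s_3$ are uniquely determined by $(a_{\rho_2},a_{\rho_3})$, so the increments above are unambiguous; in the degenerate special cases of Definition~\ref{def of a_rho_i} one of the $s_i$ is fixed to be $0$ by convention and the stabilization about the remaining boundary raises the free parameter by one, so the same conclusion holds.

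The main obstacle I anticipate is the bookkeeping in the middle paragraph: one must verify rigorously that the word acquired by $\nu$ as it crosses $\alpha_2$ and $\alpha_1$ (and avoids $\alpha_3$) contributes exactly $(k_1+k_2,\,k_1)$ to $(a_{\rho_2},a_{\rho_3})$, and that this geometric change is the single lattice vector $(1,0)$ (resp. $(0,1)$) in the $(s_2,s_3)$-coordinates rather than some other integral relation coming from (\ref{prop-homology}). Once this identification of the stabilization move with a row of the defining matrix is pinned down, the remainder of the argument is immediate.
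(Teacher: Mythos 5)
Your proposal is correct and follows essentially the same route as the paper: positive stabilizations about $\gamma_2$ and $\gamma_3$ (transverse-type preserving by Theorem~\ref{elena2}-(2)) change $(a_{\rho_2},a_{\rho_3})$ by the rows $(k_1+k_2,\,k_1)$ and $(k_1,\,k_1+k_3)$ of the matrix in (\ref{eq matrix}), hence increment $s_2$ resp.\ $s_3$ by one, and finitely many such moves make both nonnegative. The geometric bookkeeping you flag as the main obstacle --- the strand crossing $\alpha_2$ and $\alpha_1$ while avoiding $\alpha_3$, with $\rho_1=\rho_2+\rho_3$ converting the inserted $\rho_1^{k_1}$ into the contribution $(k_1,k_1)$ --- is exactly the computation the paper carries over from Proposition~\ref{assumption for a_rho}, and your handling of the degenerate cases matches the paper's conventions in Definition~\ref{def of a_rho_i}.
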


\begin{proof}

A similar argument as in the proof of Proposition~\ref{assumption for a_rho} applies.
Recall that a positive braid stabilization preserves the transverse knot type (Theorem~\ref{elena2}).
Since the point $x_n$ is between $\gamma_1$ and $\alpha_1$, positive stabilizations of $b$ about $\gamma_2$ (resp. $\gamma_3$) for $\alpha$-times (resp. $\beta$-times), where $\alpha, \beta \geq 0$, change $a_{\rho_i}$ in the following way:
$$a_{\rho_2} \mapsto a_{\rho_2} + \alpha(k_1 + k_2), \
a_{\rho_3} \mapsto a_{\rho_3} + \alpha k_1,
$$
$$
(\mbox{resp. } a_{\rho_2} \mapsto a_{\rho_2} + \beta k_1, \
a_{\rho_3} \mapsto a_{\rho_3} + \beta(k_1 + k_3)). 
$$
By (\ref{eq matrix}), $s_i$ also changes as;
$$s_2 \mapsto s_2 + \alpha \  \mbox{ if } k_1+k_2\neq 0 \mbox{ or } k_1\neq 0,$$
$$\mbox{(resp. } s_3 \mapsto s_3 + \beta \  \mbox{ if } k_1+k_3\neq 0 \mbox{ or } k_1\neq 0).$$
Therefore taking $\alpha, \beta$ sufficiently large, we can make $s_2, s_3 \geq 0$, even for the special three cases in Definition~\ref{def of a_rho_i}. 
\end{proof}

Now we state our main result of this section:

\begin{theorem}\label{sl for Seifert manifold}
Let $b$ be a null-homologous closed braid in $(S, D_1^{k_1} \circ D_2^{k_2} \circ D_3^{k_3})$ satisfying Assumption~\ref{position-assumption-2} and Lemma~\ref{s_i positive}. 
There is a Seifert surface $\Sigma_b$ of $b$ for which the following holds:
$$
sl(b, [\Sigma_b])=-n+a_{\sigma}+a_{\rho_2}(1-s_2) + a_{\rho_3}(1-s_3) - (s_2 + s_3) k_1.
$$
\end{theorem}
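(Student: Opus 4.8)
The plan is to repeat the surface construction of Section~\ref{section 2} and the singularity count of Section~\ref{section sl}, now carried out in the pants open book $(S, D_1^{k_1}\circ D_2^{k_2}\circ D_3^{k_3})$. Writing $b$ as a word in $\{\sigma_1,\dots,\sigma_{n-1},\rho_2,\rho_3\}$ and placing the marked points as in Assumption~\ref{position-assumption-2}, I would first build an embedded surface $F_b$: sweep $n$ disks $\delta_1,\dots,\delta_n$ about the binding $\gamma_1$, insert a $\pm$-twisted band between consecutive $\delta$-disks for each $\sigma_i^{\pm 1}$, and attach to $\delta_n$ an $\mathfrak{A}$-annulus for each $\rho_2^{\pm 1}$ (winding about $\gamma_2$) and each $\rho_3^{\pm 1}$ (winding about $\gamma_3$). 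Cancelling coexisting $\pm\beta_i$ circles by vertical annuli exactly as in Proposition~\ref{a}, and using Lemma~\ref{s_i positive} to normalise signs, produces a surface $\tilde F_b$ whose hole-parallel boundary consists of $a_{\rho_2}$ copies of $\beta_2$ and $a_{\rho_3}$ copies of $\beta_3$.

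Next I would cap this boundary. Guided by the matrix relation~(\ref{eq matrix}), attach $s_2$ $\omega$-disks about $\gamma_2$ and $s_3$ $\omega$-disks about $\gamma_3$, joined to the $\mathfrak{A}$-annuli by twisted bands as in Section~\ref{section check F_b}; the proof of Lemma~\ref{no singularity on band} applies verbatim to show these carry only positive elliptic singularities, giving an immersed $\check F_b$. Its remaining boundary is then capped by $s_2+s_3$ nearly vertical $\mathcal{D}$-disks about $\gamma_1$, each contributing a single negative elliptic point as in Section~\ref{section hat F_b}, yielding the immersed surface $\hat F_b$. Resolving the resulting branch, clasp and ribbon self-intersections by the cut-and-reglue procedure of Section~\ref{section resolution}, with signs controlled by Proposition~\ref{prop-sign} and Corollary~\ref{cor-sign}, gives the embedded Seifert surface $\Sigma_b$.

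Applying formula~(\ref{sl-formula}), the elliptic points of the $\delta$-disks give $n$, while the $s_2+s_3$ positive elliptic points of the $\omega$-disks cancel the $s_2+s_3$ negative elliptic points of the $\mathcal{D}$-disks, so $e^+ - e^- = n$. The $\sigma$-bands contribute $a_\sigma = h^+_\sigma - h^-_\sigma$ to the signed hyperbolic count, and the bands joining $\delta_n$ to the two families of $\mathfrak{A}$-annuli contribute $a_{\rho_2} + a_{\rho_3}$; the resolution of the self-intersections contributes $-N$, where $N$ is the total number of new hyperbolic points, all of the same (negative) sign in the admissible range of $(k_1,k_2,k_3)$. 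Thus $sl(b,[\Sigma_b]) = -n + a_\sigma + a_{\rho_2} + a_{\rho_3} - N$, and the theorem reduces to the single identity
\[
N = a_{\rho_2}s_2 + a_{\rho_3}s_3 + (s_2+s_3)\,k_1 .
\]

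The main obstacle is establishing this identity. In the annulus case the single relation $a_\rho = sk$ of Corollary~\ref{def of s} made the bookkeeping transparent: there are $a_\rho$ branches and $|k|\binom{s}{2}$ clasps, giving $N = a_\rho + a_\rho(s-1) = a_\rho s$. Here the winding is governed by the full matrix~(\ref{eq matrix}): the $\mathcal{D}$-disks about $\gamma_1$ spiral according to $D_1^{k_1}$ and therefore clasp with one another and branch across both families of $\mathfrak{A}$-annuli, while the hole-parallel pieces wind according to $D_2^{k_2}$ and $D_3^{k_3}$. Enumerating these branch and clasp arcs in terms of $s_2,s_3,k_1,k_2,k_3$, re-expressing the result through~(\ref{eq matrix}), and checking that the twist $k_1$ shared by the two holes produces precisely the cross term $(s_2+s_3)k_1$, is the technical core. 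The sign restriction on $(k_1,k_2,k_3)$ in the hypothesis is exactly what guarantees, through Definition~\ref{def sign of singularity} and Proposition~\ref{prop-sign}, that all branch and clasp arcs carry the same sign, so that the $N$ new hyperbolic points do not partially cancel; the remaining $k_i\le 0$ and mixed-sign cases then follow from the mirror construction.
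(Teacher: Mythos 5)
Your route is the paper's route: the same $\delta$-disk/band/$\mathfrak{A}$-annulus surface $\tilde F_b$, the same $\omega$-disks and $\mathcal{D}$-disks, resolution of singularities via Section~\ref{section resolution}, and your bookkeeping $e^+-e^-=n$ together with the reduction to the identity $N=a_{\rho_2}s_2+a_{\rho_3}s_3+(s_2+s_3)k_1$ is arithmetically consistent with the theorem. But there is a genuine gap, and it is exactly at the step you defer: you have no \emph{bridge bands}. When $k_1\neq 0$, the boundary of a $\mathcal{D}$-disk about $\gamma_1$ spirals $k_1$ times along a curve parallel to $\alpha_1$, which encircles \emph{both} holes of $S$; it cannot be glued to the $\mathfrak{A}$-annuli, whose free boundary circles $\beta_2$, $\beta_3$ each encircle a single hole. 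The paper must first join $(s_2+s_3)|k_1|$ pairs $(\beta_2,\beta_3)$ by bridge bands to manufacture curves parallel to $\alpha_1$, and only then cap with $\mathcal{D}$-disks; so as described, your $\hat F_b$ does not close up for $k_1\neq 0$, and the identity you call the technical core cannot even be set up. Moreover, these bands change the accounting: by Lemma~\ref{lemma h-band} each bridge band carries one hyperbolic point of sign $-\epsilon(k_1)$, contributing $-(s_2+s_3)k_1$ on the immersed surface itself. Your claim that all of $N$ comes from resolving self-intersections is therefore wrong in the paper's count: resolution of the branch and clasp arcs yields, via (\ref{i1}) and (\ref{i2}), only
\begin{equation*}
-\Bigl\{ s_2(k_1+k_2)+s_3(k_1+k_3) \Bigr\} - 2\Bigl\{ \binom{s_2}{2}(k_1+k_2)+\binom{s_3}{2}(k_1+k_3)+s_2s_3k_1 \Bigr\} = -(s_2a_{\rho_2}+s_3a_{\rho_3}),
\end{equation*}
by (\ref{eq matrix}); the cross term $(s_2+s_3)k_1$ is the bridge-band contribution, not a resolution term.

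A second, smaller gap is the mixed-sign case $k_1=0$, $k_2k_3<0$ allowed by the hypothesis, which your closing remark about a ``mirror construction'' does not address: there $k_2$ and $k_3$ have opposite signs simultaneously, the $\beta_2$- and $\beta_3$-boundaries of $\tilde F_b$ carry opposite orientations $\epsilon(k_2)$, $\epsilon(k_3)$, and the branch/clasp arcs near $\gamma_2$ and $\gamma_3$ do \emph{not} all have the same sign. The paper handles this by the dummy-letter modification (\ref{dummy}), $b\mapsto (b\,\rho_3^{-s_3k_3})(\rho_3^{s_3k_3})$, attaching the vertical nested annuli only to the first factor so that the $\mathcal{D}$-disks stay compatible with the monodromy; the algebraic counts (\ref{i1})--(\ref{i2}) then remain valid with mixed signs because each family of arcs has a coherent sign determined by Definition~\ref{def sign of singularity}. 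Your intuition that the $\mathcal{D}$-disks about $\gamma_1$ clasp each other ($|k_1|\binom{s_2+s_3}{2}$ clasps) and branch across both $\mathfrak{A}$-families is correct as far as it goes, but the actual enumeration -- including the $|k_2|\binom{s_2}{2}$ and $|k_3|\binom{s_3}{2}$ clasps coming from the $D_2^{k_2}$, $D_3^{k_3}$ twists, and the fact that each $\omega$-disk about $\gamma_2$ must be banded to $|k_1|+|k_2|$ annuli -- is left entirely open in your sketch, and it is precisely this enumeration, plus Lemma~\ref{lemma h-band}, that constitutes the proof.
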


\begin{proof}

We construct an embedded surface $\tilde F_b$ after Sections~\ref{section F_b} and \ref{section-tilde F_b}.
\begin{itemize}

\item 
Construct $n$-copies of the $\delta$-disk (cf. Figure~\ref{disks}).

\item 
Join them by twisted bands for each $\sigma_j^{\pm}$ in the braid word (cf. Figure~\ref{surface1}-(1)).

\item 
Attach ${\mathfrak A}$-annuli for each $\rho_2^{\pm}, \rho_3^{\pm}$ in the braid word (cf. Figure~\ref{surface1}-(2)).

\item 
Attach vertical nested annuli to remove redundant boundaries (cf. Figure~\ref{canceling_rho}). 
This procedure is more subtle than that of annulus open book case.

If $k_1, k_2, k_3 \geq 0$ or $k_1, k_2, k_3 \leq 0$ then attach vertical nested annuli near the bindings $\gamma_2$ and $\gamma_3$ following the algorithm described in the proof of Proposition~\ref{a}.
The boundary of the resulting surface, $\tilde{F}_b,$ has 
\begin{eqnarray*}
\partial\tilde F_b &=&
b\ \cup \
\{ |(s_2+s_3) k_1 + s_2 k_2|  \mbox{ copies of } \epsilon(k_2) \beta_2 \} \\
&& \cup \
\{ |(s_2+s_3) k_1 + s_3 k_3|  \mbox{ copies of } \epsilon(k_3) \beta_3 \}
\end{eqnarray*}
where $\beta_i$ is the oriented circle as in Figure~\ref{3-punk-disk} and $\epsilon(k_i)$ is the sign of $k_i$.

Otherwise, by symmetry of the pants surface we may assume that 
(i) $k_1, k_2 \geq 0$ and $k_3<0$, or
(ii) $k_1, k_2 \leq 0$ and $k_3>0.$
For either case, we change the braid word by adding dummy letters that preserves the transverse knot type:
\begin{equation}\label{dummy}
b \mapsto (b\ \rho_3^{-s_3 k_3})\ (\rho_3^{s_3 k_3})
\end{equation}
Attach vertical nested annuli to the first part $b \rho_3^{-s_3 k_3}$, without touching the remaining part $\rho_3^{s_3 k_3}$,  until all their boundary circles of the $\mathfrak A$-annuli have the same direction. 
The boundary of the resulting surface, $\tilde{F}_b,$ has 
\begin{eqnarray*}
\partial\tilde F_b &=&
b\ \cup \
\{ (s_2+s_3)|k_1|   \mbox{ copies of } \epsilon(k_2) (\beta_2 + \beta_3) \} \\
&& \cup \ 
\{ s_2 |k_2| \mbox{ copies of } \epsilon(k_2)\beta_2 \} \
\cup \
\{ s_3 |k_3| \mbox{ copies of } \epsilon(k_3)\beta_3 \}.
\end{eqnarray*}
\end{itemize}

We require the operation (\ref{dummy}) so that $\mathcal D$-disks introduced below can be compatible with the monodromy of the open book. 
Note that if the signs of $k_2$ and $k_3$ are different
$$|a_{\rho_2}|=|(s_2+s_3) k_1 + s_2 k_2| = (s_2+s_3) |k_1| + s_2 |k_2|,$$
but
$$|a_{\rho_3}|=|(s_2+s_3) k_1 + s_3 k_3| \neq (s_2+s_3) |k_1| + s_3 |k_3|.$$
Compare Figure~\ref{immersed-surface2}, where $k_i$'s have the same sign, and Figure~\ref{immersed-surface4}, where $k_1, k_2>0$ and $k_3<0$. Their right bottom parts are different.

\begin{figure}[htpb!]
\begin{center}
\begin{picture}(400, 540)
\put(0,0){\includegraphics{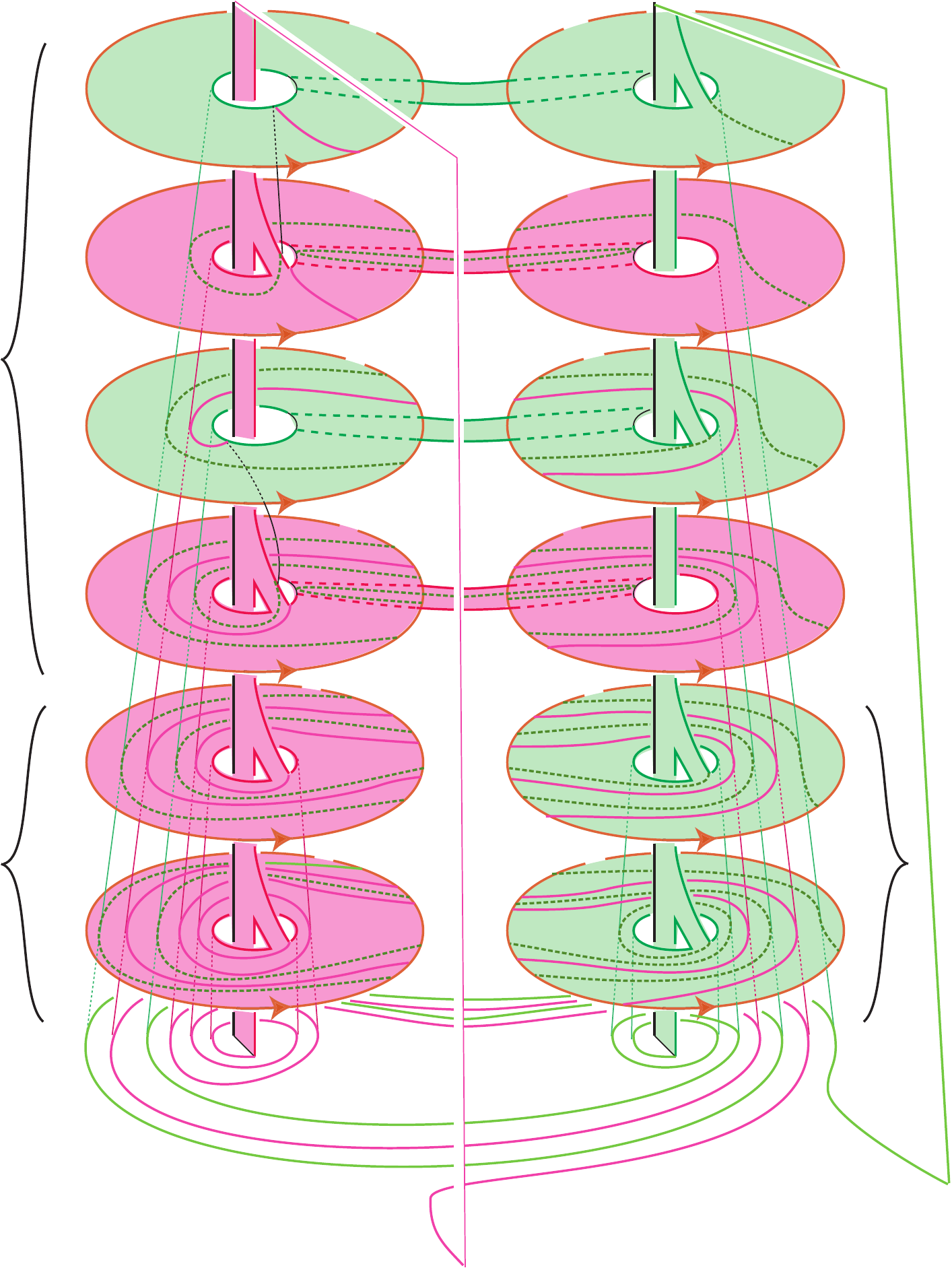}}
\put(102, 520){$\omega$}
\put(284, 520){$\omega$}
\put(180, 466){$\mathcal D$}
\put(365, 495){$\mathcal D$}
\put(-30, 380){\small $(s_2+s_3)|k_1|$}
\put(-35, 173){\small $s_2|k_2|$}
\put(395, 173){\small $s_3|k_3|$}
\put(130, 490){$l_1$}
\put(123, 450){$l_2$}
\put(110, 410){$l_3$}
\put(120, 480){\scriptsize clasp}
\put(135, 415){\scriptsize branch}
\put(130, 375){\scriptsize clasp}
\put(130, 340){\scriptsize ribbon}
\put(170, 515){bridge band}
\put(40, 480){$\rho_2$}
\put(290, 480){$\rho_3$}

\end{picture}
\caption{An immersed surface. $k_1=k_2=k_3=2$. $s_2=s_3=1$. Bridge bands are attached from above/below $\mathfrak A$-annuli depending on their $\theta$-heights.}\label{immersed-surface2}
\end{center}
\end{figure}

\begin{figure}[htpb!]
\begin{center}
\begin{picture}(400, 540)
\put(0,0){\includegraphics{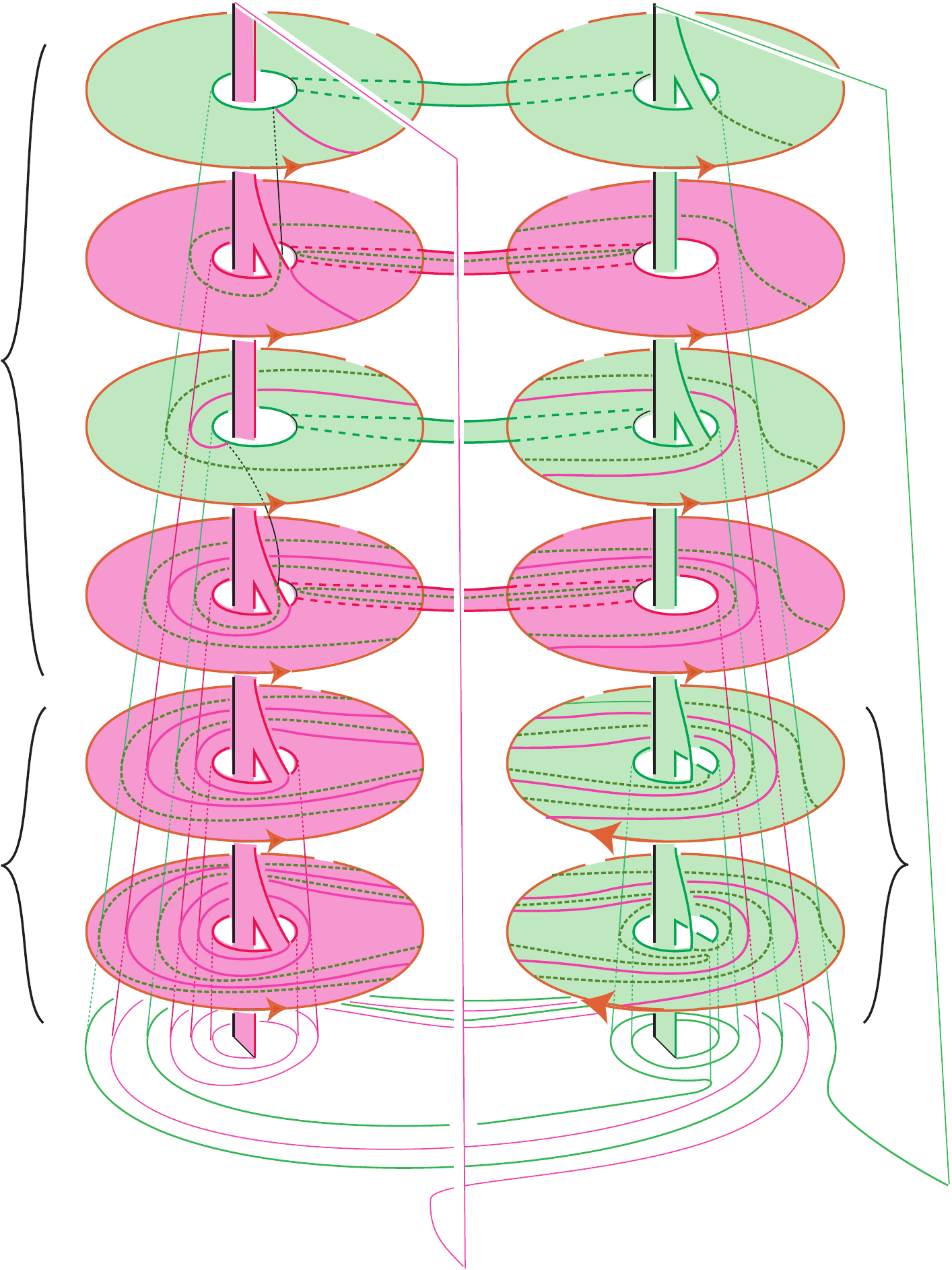}}

\put(102, 520){$\omega$}
\put(284, 520){$\omega$}
\put(180, 466){$\mathcal D$}
\put(365, 495){$\mathcal D$}
\put(-30, 375){\small $(s_2+s_3) |k_1|$}
\put(-35, 173){\small $s_2|k_2|$}
\put(395, 173){\small $s_3|k_3|$}
\put(40, 480){$\rho_2$}
\put(290, 480){$\rho_3$}

\end{picture}
\caption{An immersed surface. $k_1=k_2=2, k_3=-2$, $s_2=s_3=1$.}\label{immersed-surface4}
\end{center}
\end{figure}

Next step is to construct an immersed surface ${\hat F}_b$. 
If $k_1=k_2=k_3=0$ we define $\hat F_b = \tilde F_b$. Otherwise apply the following:
\begin{itemize}

\item 
If $k_1 =0$ we skip this step. 

If $k_1\neq 0$, join the top two $\mathfrak A$-annuli around $\gamma_2$ and $\gamma_3$ by a band, called a {\em bridge band} as in Figures~\ref{foliation-3}, \ref{immersed-surface2}.
The bridge band connects the $\beta_2$ circle and the $\beta_3$ circle of $\tilde F_b$.
If the $\theta$-coordinate (height) of the $\mathfrak A$-annulus around $\gamma_2$ is larger (resp. smaller) than the one around $\gamma_3$, then the bridge band is attached to $\beta_2$ from below (resp. above) and to $\beta_3$ from above (resp. below).
As a consequence, the bridge band does not tangent to the pages of the open book.

Repeat this for the first $(s_2+s_3)|k_1|$ pairs of $\mathfrak A$-annuli from the top.

\item 
If $k_1=k_2=0$ we skip this step. 

Otherwise, put $s_2$ copies of the $\omega$-disk, $\omega_1, \cdots, \omega_{s_2}$, about $\gamma_2$ (dark pink in Figure~\ref{immersed-surface2}).
Denote the $\mathfrak A$-annuli around $\gamma_2$ from the top by $\mathfrak A_1, \cdots, \mathfrak A_{|k_1|(s_2+s_3)+|k_2|s_2}$. 
Connect the disk $\omega_i$ with $|k_1|+ |k_2|$ copies of ${\mathfrak A}$-annulus; 
\begin{eqnarray*}
&& 
\mathfrak A_{s_3+i},\  
\mathfrak A_{2s_3+s_2+i},\ \cdots,\  
\mathfrak A_{|k_1|s_3+(|k_1|-1)s_2+i}, \\
&& 
\mathfrak A_{|k_1|(s_3+s_2)+i},\  
\mathfrak A_{|k_1|(s_3+s_2)+s_2+i},\ \cdots,\ 
\mathfrak A_{|k_1|(s_3+s_2)+(|k_2|-1)s_2+i},    
\end{eqnarray*}
\begin{eqnarray*}
&&
\mbox{(when } k_1=0, k_2 \neq 0, \mbox{ they are }
\mathfrak A_i,\ 
\mathfrak A_{s_2+i},\ \cdots,\ 
\mathfrak A_{(|k_2|-1)s_2 +i}, \\
&&
\mbox{when } k_1\neq 0, k_2 =0, \mbox{ they are }
\mathfrak A_{s_3+i},\  
\mathfrak A_{2s_3+s_2+i},\ \cdots,\  
\mathfrak A_{|k_1|s_3+(|k_1|-1)s_2+i})
\end{eqnarray*}
by using the twisted bands. Depending on the signs of $k_1$ and $k_2$, the twisted band is attached differently as described in Figure~\ref{foliation-1}.

\item 
Attach $s_2$ copies of $\mathcal{D}$-disk about $\gamma_1$ to the $\mathfrak A$-annuli around $\gamma_2$.

\item
Similarly, attach $s_3$ copies of $\omega$-disk (lighter shaded in Figure~\ref{immersed-surface2}) about $\gamma_3$, add $|k_1|+|k_3|$ copies of twisted bands, and $s_3$ copies of $\mathcal{D}$-disk.
\end{itemize}
Finally we have obtained an immersed surface ${\hat F}_b$ with boundary $b$.

The following two lemmas investigate singularities of the characteristic foliation.

\begin{lemma}\label{lemma h-band}

If $k_1 >0$ $($resp. $k_1<0)$ then the characteristic foliation for each bridge band has a single negative $($resp. positive$)$ hyperbolic singularity. See Figure~\ref{foliation-3}.

\end{lemma}

\begin{figure}[htpb!]
\begin{center}
\begin{picture}(275, 100)
\put(0,0){\includegraphics{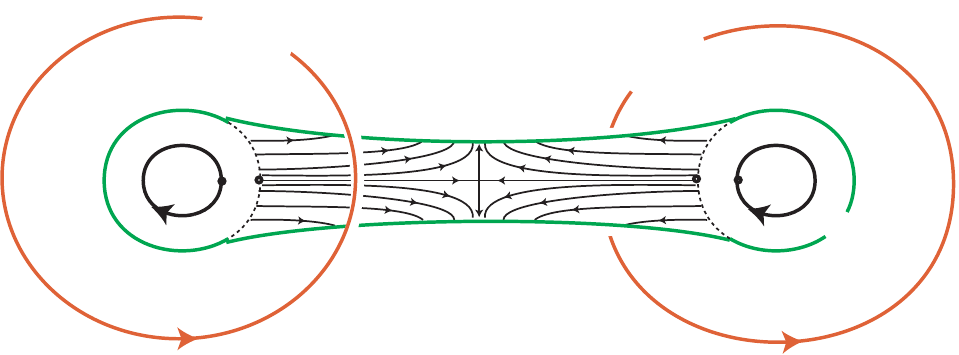}}
\put(30, 10){\scriptsize$\rho_2$}
\put(205, 10){\scriptsize$\rho_3$}
\put(45, 60){\scriptsize$\gamma_2$}
\put(220, 60){\scriptsize$\gamma_3$}
\put(52, 50){\scriptsize$p_2$}
\put(215, 50){\scriptsize$p_3$}
\put(80, 50){\scriptsize$p_2'$}
\put(190, 50){\scriptsize$p_3'$}

\put(10, 50){\scriptsize${\mathfrak A}$}
\put(260, 50){\scriptsize${\mathfrak A}$}
\put(25, 50){\scriptsize$\beta_2$}
\put(245, 50){\scriptsize$\beta_3$}
\end{picture}
\caption{Characteristic foliation on a bridge band when $k_1>0$.}\label{foliation-3}
\end{center}
\end{figure}

\begin{proof}

Let $p_i$ ($i=2,3$) be a point on the binding $\gamma_i$. Assume $p_i' \in \beta_i$ is a point close to $p_i$ and the bridge band connects $p_2'$ and $p_3'$, see Figure~\ref{foliation-3}.
At $p_i$, the contact plane intersects $\gamma_i$ positively.
Along the line segment from $p_2'$ to $p_3'$, the contact planes rotate $(180-\epsilon)^\circ$ counterclockwise. The contact plane is tangent to the bridge band at a single point somewhere between $p_2'$ and $p_3'$, where the hyperbolic singularity occurs.  If $k_1>0$ (resp. $k_1<0$), the positive (negative) side of the band is facing up to the reader, thus the sign of the hyperbolic point is positive (negative).
\end{proof}

\begin{lemma}
All the singularities of the characteristic foliation for $(\hat{F}_b \setminus \tilde{F}_b)$, the union of $\omega$-disks, twisted bands and $\mathcal D$-disks, are elliptic. Moreover, the algebraic count $e^+ -e^-$ of elliptic singularities for the $\omega$-disks $($resp. $\mathcal D$-disks$)$ is $s_2+s_3$, $($resp. $-s_2-s_3)$.
\end{lemma}

\begin{proof}
As seen in the proof of Lemma~\ref{no singularity on band}, there are no hyperbolic singularities on the twisted bands.
\end{proof}


We continue the proof of Theorem~\ref{sl for Seifert manifold}.

Figure~\ref{immersed-surface2} exhibits branch, clasp and ribbon intersections of $\hat{F}_b$.
For example, in Figure~\ref{immersed-surface2}, the union of arcs $l_1 \cup l_2 \cup l_3$ is a clasp intersection. 
Each pair among the $s_2+s_3$ $\mathcal D$-disks about $\gamma_1$ gives rise to $|k_1|$ clasp intersections.
Additionally, each pair among  the $s_2$ $\mathcal D$-disks about $\gamma_1$ attached to curves near $\gamma_2$ gives rise to $|k_2|$ clasp intersections, and  each pair among the $s_3$ $\mathcal D$-disks about $\gamma_1$ attached to curves near $\gamma_3$ gives rise to $|k_3|$ clasp intersections.
In total, there are 
\begin{eqnarray*}
&& |k_1| \binom{s_2+s_3}{2} + |k_2| \binom{s_2}{2} + |k_3| \binom{s_3}{2} \\
&=& \binom{s_2}{2}(|k_1| + |k_2|) +\binom{s_3}{2}(|k_1| + |k_3|) +s_2 s_3 |k_1|
\end{eqnarray*} 
clasp intersections.
Signs are assigned to each intersection according to Definition~\ref{def sign of singularity}.
If we count them algebraically,
\begin{eqnarray}
&&\mbox{\small algebraic number of branches} = -s_2(k_1 + k_2) - s_3(k_1+ k_3). \label{i1} \\
&&\mbox{\small  algebraic number of clasps} = -\binom{s_2}{2} (k_1 + k_2) - \binom{s_3}{3} (k_1 + k_3) - s_2 s_3 k_1. \label{i2}
\end{eqnarray}
Resolving all the intersection arcs, we obtain an embedded surface $\Sigma_b$.

By Proposition~\ref{prop-sign} and Corollary~\ref{cor-sign}, the
resolution of branch, clasp and ribbon intersections create additional hyperbolic singularities. The total algebraically counted number of such hyperbolic points is:
\begin{equation}\label{algebraic-count}
(\ref{i1}) + 2\times (\ref{i2}) =-\{(s_2 + s_3)^2 k_1 + s_2^2 k_2 +
s_3^2 k_3\}.
\end{equation}
In summary we have:
\begin{eqnarray*}
e^+ &=& (n; \small\mbox{ $\delta$-disks}) + (s_2 + s_3; \small\mbox{  $\omega$-disks}), \\
e^- &=& (s_2 + s_3; \small\mbox{ $\mathcal D$-disks}), \\
h^+ -h^- &=& (a_\sigma; \small\mbox{ bands between $\delta$-disks}) + (a_{\rho_2}+a_{\rho_3}; \small\mbox{ bands between $\delta_n$ and ${\mathfrak A}$-annuli}) \\
&& -(k_1(s_2+s_3); \small\mbox{ bridge bands; Lemma~\ref{lemma h-band}}) \\
&& - ((s_2 + s_3)^2 k_1 + s_2^2 k_2 + s_3^2 k_3; \small\mbox{ resolution of branches, clasps, ribbons (\ref{algebraic-count})}) \\
&\stackrel{(\ref{eq matrix})}{=}& a_\sigma + a_{\rho_2}+a_{\rho_3} - s_2 (a_{\rho_2} + k_1) - s_3 (a_{\rho_3} + k_1) \\
&=& a_\sigma + a_{\rho_2}(1-s_2) +a_{\rho_3}(1-s_3) - (s_2+s_3)k_1.
\end{eqnarray*}
Finally we have
\begin{eqnarray*}
sl(b, [\Sigma_b])&=&-(e^+-e^-)+(h^+-h^-) \\
&=& -n + a_\sigma + a_{\rho_2}(1-s_2) +a_{\rho_3}(1-s_3) - (s_2+s_3)k_1.
\end{eqnarray*}
\end{proof}

\bibliographystyle{amsplain}

\end{document}